\theoremstyle{change}
\newcommand{\leftexp}[2]{{\vphantom{#2}}^{#1}%
      \kern-\scriptspace%
      {#2}}
\newcommand{\tst}{\textstyle}
\renewcommand{\Im}{\mathrm{Im}}
\newcommand{\pp}{{\p_{-}\text{-}\mathrm{fin}}}
\renewcommand{\Re}{\mathrm{Re}}
\renewcommand{\H}{\mathbb H}
\newcommand{\h}{\mathfrak h}
\newcommand{\A}{{\mathbb A}}
\renewcommand{\AA}{{\mathcal A}}
\renewcommand{\k}{{\mathbf k}}
\newcommand{\D}{{\mathcal D}}
\newcommand{\isomto}{\overset{\sim}{\longrightarrow}}
\newcommand{\T}{{\mathcal T}}
\newcommand{\Tr}{\mathrm{Tr}}
\newcommand{\Q}{{\mathbb Q}}
\newcommand{\Z}{{\mathbb Z}}
\newcommand{\R}{{\mathbb R}}
\newcommand{\C}{{\mathbb C}}
\newcommand{\bs}{\backslash}
\newcommand{\p}{\mathfrak p}
\newcommand{\f}{\mathfrak f}
\newcommand{\g}{\mathfrak g}
\newcommand{\U}{\mathcal U}
\renewcommand{\O}{{\mathcal O}}
\newcommand{\GL}{{\rm GL}}
\newcommand{\GSp}{{\rm GSp}}
\newcommand{\Sp}{{\rm Sp}}
\newcommand{\cond}{\mathrm{cond}}
\newcommand{\Aut}{{\rm Aut}}
\newcommand{\sgn}{{\rm sgn}}
\newcommand{\vol}{{\mathrm {vol}}}
\newcommand{\Gal}{{\rm Gal}}
\newcommand{\Ga}{{\mathcal{G}}}
\newcommand{\Hom}{{\rm Hom}}
\newcommand{\sym}{{\rm sym}}
\newcommand{\vl}{{\rm vol}}
\newcommand{\mat}[4]{{\setlength{\arraycolsep}{0.5mm}\left[
\begin{smallmatrix}#1&#2\\#3&#4\end{smallmatrix}\right]}}
\newcommand{\qed}{\hspace*{\fill}\rule{1ex}{1ex}}
\newcommand{\forget}[1]{}
\def\qdots{\mathinner{\mkern1mu\raise0pt\vbox{\kern7pt\hbox{.}}\mkern2mu
\raise3.4pt\hbox{.}\mkern2mu\raise7pt\hbox{.}\mkern1mu}}
\newenvironment{proof}{\vspace{0ex}\noindent{\it Proof.}\hspace{0.1em}}
	{\hfill\qed\vspace{1ex}}
\newtheorem{lemma}{Lemma.}[section]
\newtheorem{theorem}[lemma]{Theorem.}
\newtheorem{corollary}[lemma]{Corollary.}
\newtheorem{proposition}[lemma]{Proposition.}
\newtheorem{remark}[lemma]{Remark.}
\begin{document}

\bibliographystyle{plain}
\thispagestyle{empty}
\begin{center}
     {\bf\Large The special values of the standard $L$-functions for $\GSp_{2n} \times \GL_1$ }

 \vspace{3ex}
Shuji Horinaga, Ameya Pitale, Abhishek Saha, Ralf Schmidt
\end{center}

\begin{abstract}\noindent
We prove the expected algebraicity property for the critical values of character twists of the standard $L$-function associated to vector-valued holomorphic Siegel cusp forms of archimede\-an type $(k_1, k_2, \ldots, k_n)$, where $k_n \ge n+1$ and all $k_i$ are of the same parity.  For the proof, we use an explicit integral representation to reduce to arithmetic properties of differential operators on vector-valued nearly holomorphic Siegel cusp forms. We establish these properties via a representation-theoretic approach.
\end{abstract}

\tableofcontents

\section{Introduction} The arithmetic of special values of $L$-functions is of great interest  in modern number theory. A central problem here is to prove the algebraicity and $\Aut(\C)$-equivariance (up to suitable periods) of critical $L$-values. For classical cusp forms on the upper-half plane,  Shimura \cite{shi75, shi76, shimura77} and Manin \cite{Manin} were the first to study the arithmetic of their critical $L$-values in the 1970s.  In this paper, we focus on  twists of standard (degree $2n+1$) $L$-functions of vector-valued Siegel cusp forms of degree $n$; these correspond to $L$-functions $L(s, \Pi \boxtimes \chi)$ on  $\GSp_{2n}(\A_\Q) \times \GL_1(\A_\Q)$ such that $\Pi_\infty$ is a holomorphic discrete series representation.
The first results in this case were obtained over forty years ago with the works of Harris \cite{harsieg} and Sturm \cite{sturm} who (independently) proved the expected algebraicity results for $\Pi$ corresponding to a scalar-valued Siegel cusp form of full level and $\chi=1$. Subsequent works on the critical $L$-values of Siegel cusp forms by various authors \cite{boch1985, bocherer-schmidt,bouganis, kozima, miz,shibook2} have strengthened and extended these results in various directions.

Nonetheless, a proof of algebraicity of critical $L$-values for  holomorphic forms on $\GSp_{2n} \times \GL_{1}$ in full generality has not yet been achieved. The case $n=2$ has now been largely resolved by a recent result \cite[Theorem 1.1]{PSS17}. For general $n$, one can parameterize the possible archimedean types $\Pi_\infty$ by $n$-tuples of positive integers $(k_1, k_2, \ldots, k_n)$, where  $k_1\ge k_2 \ge \ldots\ge k_n$. If $\Pi$ is associated to a \emph{scalar-valued} Siegel cusp form, we have $k_1=k_2 = \ldots = k_n$; this case is now essentially solved by the succession of works cited above.  The major stumbling block is the general \emph{vector-valued} case, where the various $k_i$ may not be equal.  It was proved by Kozima \cite{kozima} that the expected algebraicity property holds for the critical values of the (untwisted) standard $L$-function of a vector-valued Siegel cusp form of full level and archimedean type $(k, \ell, \ell, \ldots, \ell)$ with $k, \ell$ even and $k \ge \ell \ge 2n+2$.  Here we prove the conjectured algebraicity property of critical $L$-values for more general archimedean types, general ramifications, and twists by characters.
 \begin{theorem}[Theorem \ref{t:globalthmarithmetic}]\label{t:mainintro}
  Let $\Pi$ be a cuspidal automorphic representation of $\GSp_{2n}(\A_\Q)$ whose archimedean component $\Pi_\infty$ is the holomorphic discrete series representation with highest weight $(k_1, k_2, \ldots, k_n)$, where the $k_i$ are integers of the same parity and $k_1\ge k_2 \ge \ldots\ge k_n\ge n+1$. Let $S$ be a finite set of places of $\Q$ including $\infty$ such that  $\Pi_p$ is unramified for $p\notin S$. Let $F$ be a nearly holomorphic Siegel cusp form of  scalar weight $k_1$ with Fourier coefficients lying in a CM field such that\footnote{We show in Section \ref{s:autrep} that an $F$ satisfying these properties always exists.} (the adelization of) $F$ generates an automorphic representation $\Pi_F$ whose local component $\Pi_{F,p}$ is twist-equivalent to $\Pi_p$ for all $p \notin S$. For each Dirichlet character $\chi$ such that $\chi_\infty = \sgn^{k_1}$ and each integer $r$ such that $1\le r \le k_n-n$, $r \equiv k_n-n \pmod{2}$, define
  $$
   D(\Pi, \chi, r; F) = \frac{L^S(r,\Pi \boxtimes \chi)}{i^{k_1}\pi^{nk_1+nr+r} \Ga(\chi)^{n+1} \langle F, F\rangle}.
  $$
  In the special case $r=1$ we also assume that $\chi^2 \neq 1$. Then $\sigma(D(\Pi, \chi, r; F)) = D({}^\sigma\!\Pi, {}^\sigma\!\chi, r; {}^\sigma\!F)$ for $\sigma \in \Aut(\C)$; in particular $D(\Pi, \chi, r; F)$ lies in a CM field.
\end{theorem}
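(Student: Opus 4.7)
The plan is to prove Theorem \ref{t:mainintro} via the doubling-method integral representation of Piatetski-Shapiro--Rallis (in the symplectic setting developed by Garrett, Shimura, B\"ocherer and others), combined with the arithmetic theory of vector-valued nearly holomorphic Siegel modular forms. Concretely, the partial $L$-function $L^S(s, \Pi \boxtimes \chi)$ is realized (up to explicit Euler factors at $S$) as a doubling zeta integral
$$
 Z(s; f_1, f_2, \Phi) = \int_{\GSp_{2n}(\Q)\bs \GSp_{2n}(\A)} f_1(g)\, \overline{f_2(g)}\, E_\Phi((g,g), s)\, dg,
$$
where $E_\Phi$ is a $\chi$-twisted Siegel-type Eisenstein series on $\GSp_{4n}$ attached to a suitable global section $\Phi$, and $f_1, f_2 \in \Pi$. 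With appropriate local sections the integral unfolds at unramified primes to give $L^S(s, \Pi \boxtimes \chi)$; because $\Pi_{F,p}$ is twist-equivalent to $\Pi_p$ for $p \notin S$, we may take $F$ (and its conjugate) as the test vector after absorbing the local twists into $\chi$.

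At the critical point $s=r$, the diagonal restriction of $E_\Phi(\cdot, r)$ is a nearly holomorphic Siegel modular form of scalar weight $k_1$ in each variable whose Fourier coefficients are, by the classical arithmeticity theorems for Siegel Eisenstein series (Shimura, Harris, Feit), algebraic after division by $i^{k_1}\pi^{nk_1+nr+r}\Ga(\chi)^{n+1}$, and transform under $\sigma \in \Aut(\C)$ via the Fourier coefficients of $\Phi$ and via the replacement of $\chi$ by ${}^\sigma\chi$. The archimedean normalizer $i^{k_1}\pi^{nk_1+nr+r}$ arises from evaluating the archimedean section (a holomorphic lowest-weight vector) at the critical point, and $\Ga(\chi)^{n+1}$ encodes the Gauss-sum contributions of the ramified local sections. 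The hypothesis $\chi^2 \neq 1$ in the case $r=1$ rules out the possible pole of $E_\Phi(\cdot, s)$ at $s=1$.

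To express the zeta integral as an algebraic multiple of $\langle F, F\rangle$ one invokes the arithmetic theory of nearly holomorphic forms: the space of nearly holomorphic Siegel cusp forms of scalar weight $k_1$ decomposes representation-theoretically by $K_\infty$-types, and the projection onto the $K_\infty$-isotype corresponding to $\Pi_\infty$ of highest weight $(k_1,\ldots,k_n)$ is realized by a canonical system of differential operators whose arithmeticity and compatibility with $\Aut(\C)$ are established in the earlier parts of the paper. Applying these operators to the restriction of $E_\Phi(\cdot, r)$ and pairing with $F$ writes $Z(r)$ as an algebraic combination of Fourier coefficients of $E_\Phi(\cdot, r)$ and of $F$; dividing by $\langle F, F\rangle$ and tracking the $\sigma$-action on each Fourier expansion yields the claimed $\Aut(\C)$-equivariance.

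The main obstacle is the rationality and Galois compatibility of these differential operators in the genuinely vector-valued case. In the scalar-weight setting $k_1=k_n$ one may invoke Shimura's classical theory of Maass--Shimura operators, but when $k_1>k_n$ the relevant operators are of higher order and intertwine different $K_\infty$-isotypes in a way that is not transparent on the analytic side. Establishing their arithmetic properties uniformly, via a representation-theoretic construction working with the Harish-Chandra module of the holomorphic discrete series and its nearly holomorphic enlargement, is the principal technical novelty required, and is the step most likely to carry the bulk of the work.
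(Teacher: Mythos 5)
Your overall route is the same as the paper's: the pullback/doubling formula of \cite{PSS17}, the arithmeticity of a Siegel-type Eisenstein series, and a representation-theoretic construction of the differential operator linking vector-valued holomorphic cusp forms to scalar-weight nearly holomorphic forms whose $\Aut(\C)$-equivariance is what really drives the proof. You also correctly identify the arithmeticity of that operator in the genuinely vector-valued regime as the principal technical obstacle; that is exactly where the paper spends most of its effort (Sections \ref{s:Lie}--\ref{s:nearholofull}, culminating in Proposition \ref{p:uoper}).

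There is however a genuine gap in the final step. You write that ``applying these operators to the restriction of $E_\Phi(\cdot,r)$ and pairing with $F$ writes $Z(r)$ as an algebraic combination of Fourier coefficients of $E_\Phi(\cdot,r)$ and of $F$; dividing by $\langle F,F\rangle$ and tracking the $\sigma$-action on each Fourier expansion yields the claimed $\Aut(\C)$-equivariance.'' This does not work as stated: the Petersson inner product $\langle\,\cdot\,,F\rangle$ is a transcendental integral, not a finite algebraic expression in Fourier coefficients, and the pullback formula \eqref{e:pullbackformulaclassical} produces $L^S(r,\Pi\boxtimes\chi)\cdot\bar F(Z_2)$ rather than anything directly built from Fourier coefficients of the participating forms. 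The step that actually bridges the arithmetic of $E$ and the arithmetic of the $L$-value is, in the paper, a \emph{spectral} (not Fourier) argument following the lemma in the appendix of \cite{bocherer-schmidt}: one first proves a spectral identity (Lemma \ref{l:expansionpullback}) expressing $G_{k,N}^\chi(Z_1,Z_2,r;Q_\tau)$ as a bilinear sum $\sum C_N([\Pi'],\chi,r)\sum_G G(Z_1)\bar G(Z_2)/\langle G,G\rangle$ over a basis of nearly holomorphic cusp forms plus contributions from the orthogonal complement, using the two-variable symmetry of the pullback (Lemma \ref{l:pullbackformulasymm}) to kill cross-terms; one then applies $\sigma$ to both sides using \eqref{e:eisarith} and the $\Aut(\C)$-stability of the $V_{N,\k}([\Pi'])$ decomposition \eqref{FVNEQ}, and compares coefficients along a $\sigma$-twisted basis (Proposition \ref{mainprop}). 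Without this spectral comparison one has no mechanism to convert equivariance of the Eisenstein series' Fourier coefficients into equivariance of $L^S(r,\Pi\boxtimes\chi)/\langle F,F\rangle$, so your final inference doesn't go through as written. Two smaller inaccuracies: there is no need to ``absorb the local twists into $\chi$''--- twist-equivalence $\Pi_{F,p}\sim\Pi_p$ directly implies equality of the local standard $L$-factors by \eqref{e:simrelationLfunction}; and the projection that isolates the relevant archimedean component is onto the $L(\k)$-isotypic (equivalently, infinitesimal character $\chi_\k$) piece of $\AA(\Gamma)^\circ_{\pp}$, realized by elements of the center $\mathcal{Z}$ of $\U(\g_\C)$, not a projection onto a single $K_\infty$-type.
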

Above, $\Ga(\chi)$ denotes the Gauss sum and the notation $L^S$ indicates that we omit the local $L$-factors corresponding to places in $S$.
\begin{remark}\label{rem:introcritical}The set of critical points  for $L(s,\Pi \boxtimes \chi)$ in the right-half plane are given by integers $r$ such that
\begin{equation}\label{critical-set}
\{1 \leq r \leq k_n-n : r \equiv k_n-n \pmod{2}\}.
\end{equation}
Thus, Theorem \ref{t:globalthmarithmetic} includes all the critical points in the right half-plane except in the special case that $k_n \equiv n+1 \pmod{2}$ and $\chi$ is quadratic, in which case our theorem cannot handle the critical point $s=1$. The reason for this omission is subtle, and is related to the fact that the normalization of the Eisenstein series corresponding to this point involves the factor $L(1, \chi^2)$ which has a pole when $\chi^2=1$. Consequently the required arithmetic results for the Eisenstein series are unavailable in this case. We also note that the critical points in the left half-plane are related to those in the right-half plane via the global functional equation (which is known by Theorem 62 of \cite{CFK}).
\end{remark}

Classically, the automorphic representation $\Pi$ in Theorem \ref{t:mainintro} arises from a  $V$-valued \emph{holomorphic} Siegel cusp form $G$ on $\H_n$ such that  $G(\gamma Z) = \rho(J(\gamma, Z))G(Z)$ for all $\gamma$ in some principal congruence subgroup of $\Sp_{2n}(\Z)$, where  $(\rho, V)$ is the finite-dimensional representation of $\GL_n(\C)$ with highest weight $(k_1, k_2, \ldots, k_n)$. (We refer to such $G$ as vector-valued holomorphic Siegel cusp forms of archimedean type $(k_1, k_2, \ldots, k_n)$.) The vector-valued holomorphic form $G$ is related to the scalar valued nearly holomorphic form $F$ via certain differential operators.

Theorem \ref{t:mainintro} is an instance of a reciprocity result for the critical $L$-values in the spirit of a famous conjecture due to Deligne \cite{deligneconj}. However, Deligne's conjecture is in the motivic world and it is a non-trivial problem to relate Deligne's motivic period to our period  $\langle F, F\rangle$ appearing in Theorem \ref{t:mainintro}. One way to observe the compatibility of our result with Deligne's conjecture
is via ratios of $L$-values (which eliminates the periods involved). In that direction,  Theorem \ref{t:mainintro} implies the following consequence of  Deligne's conjecture.
\begin{corollary}\label{cor:mainintro}
  Let  $k_1\ge k_2 \ge \ldots\ge k_n\ge n+1$ be integers, where all $k_i$ have the same parity. Let $\Pi_1$ and $\Pi_2$ be irreducible cuspidal automorphic representations of $\GSp_{2n}(\A_\Q)$ such that $\Pi_{1,\infty} \simeq \Pi_{2, \infty}$ is the holomorphic discrete series representation with highest weight $(k_1, k_2, \ldots, k_n)$. Suppose that for almost all primes $p$, the representations $\Pi_{1,p}$ and $\Pi_{2,p}$ are twist-equivalent,\footnote{This essentially means that the two representations are equivalent when restricted to $\Sp_{2n}(\Q_p)$.} i.e., there exists a character $\psi_p$ of $\Q_p^\times$ satisfying $\Pi_{1,p} \simeq \Pi_{2,p} \otimes (\psi_p\circ\mu_n)$, where $\mu_n$ is the multiplier homomorphism from $\GSp_{2n}(\Q_p) \rightarrow \Q_p^\times$. Let $S$ be a finite set of places including $\infty$ such that  $\Pi_{1,p}$ and $\Pi_{2,p}$ are unramified for $p\notin S$. Then, for primitive Dirichlet characters $\chi_1$, $\chi_2$ such that $\chi_{1,\infty} = \chi_{2,\infty} = \sgn^{k_1}$, and integers $r_1$, $r_2$ such that $2\le r_1, r_2 \le k_n-n$, $r_1 \equiv r_2 \equiv k_n-n \pmod{2}$,
  \begin{equation}\label{e:last}
    \frac{L^S(r_1,\Pi_1 \boxtimes \chi_1)}{\pi^{(n+1)(r_1 - r_2)}L^S(r_2,\Pi_2 \boxtimes \chi_2)} \quad \text{lies in a CM field}.
  \end{equation}
\end{corollary}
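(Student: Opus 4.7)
The plan is to derive the corollary directly from Theorem \ref{t:mainintro}, applied twice---once to $(\Pi_1, \chi_1, r_1)$ and once to $(\Pi_2, \chi_2, r_2)$. The key idea is to use the \emph{same} nearly holomorphic form $F$ in both applications, so that upon forming the quotient the Petersson norm $\langle F, F\rangle$ and the transcendental factors $i^{k_1}$ and $\pi^{nk_1}$ all cancel. To set this up, enlarge $S$ if necessary so that $\Pi_{1,p} \simeq \Pi_{2,p}\otimes(\psi_p\circ\mu_n)$ holds for every $p\notin S$; only finitely many primes need be absorbed, and the resulting change in $L^S$ is a ratio of unramified local $L$-factors at integer points, which is routinely checked to lie in a CM field and hence does not affect the final conclusion.

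By the existence statement referenced in Theorem \ref{t:mainintro} (established in Section \ref{s:autrep}), applied to $\Pi_1$ and this $S$, we obtain a scalar weight-$k_1$ nearly holomorphic Siegel cusp form $F$ such that $\Pi_{F,p}$ is twist-equivalent to $\Pi_{1,p}$ at every $p\notin S$. By transitivity of twist-equivalence at each local place, $\Pi_{F,p}$ is then also twist-equivalent to $\Pi_{2,p}$ at every $p\notin S$, so the same $F$ is an admissible choice in Theorem \ref{t:mainintro} for $(\Pi_2, \chi_2, r_2)$ as well. Since $r_1, r_2 \ge 2$, the excluded case $r=1$, $\chi^2=1$ never arises, and Theorem \ref{t:mainintro} yields that
$$D_i := D(\Pi_i, \chi_i, r_i; F) = \frac{L^S(r_i,\Pi_i\boxtimes\chi_i)}{i^{k_1}\pi^{nk_1+nr_i+r_i}\Ga(\chi_i)^{n+1}\langle F, F\rangle}, \qquad i=1,2,$$
both lie in a CM field.

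Dividing the $i=1$ identity by the $i=2$ identity, the factors $i^{k_1}$, $\pi^{nk_1}$, and $\langle F, F\rangle$ cancel, and the remaining powers of $\pi$ combine to $\pi^{n(r_1-r_2)+(r_1-r_2)} = \pi^{(n+1)(r_1-r_2)}$. Rearranging gives
$$\frac{L^S(r_1,\Pi_1\boxtimes\chi_1)}{\pi^{(n+1)(r_1-r_2)}\,L^S(r_2,\Pi_2\boxtimes\chi_2)} \;=\; \frac{D_1}{D_2}\cdot\Bigl(\frac{\Ga(\chi_1)}{\Ga(\chi_2)}\Bigr)^{n+1}.$$
Each Gauss sum lies in a cyclotomic, hence CM, field, and the compositum of finitely many CM fields is again CM; since each $D_i$ also lies in a CM field, the right-hand side lies in a CM field, proving \eqref{e:last}. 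The entire substance of the argument is in Theorem \ref{t:mainintro}; the only step requiring care is arranging a common $F$, which is dissolved by enlarging $S$ so that the twist-equivalence hypothesis becomes uniform in the two representations. Hence there is no serious obstacle beyond this bookkeeping.
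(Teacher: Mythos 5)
Your proof is correct and follows essentially the same approach as the paper's one-line proof: apply Theorem \ref{t:globalthmarithmetic} twice with a common $F$, divide, and observe that the Petersson norms cancel and the remaining ratio of Gauss sums is in a cyclotomic (hence CM or totally real) field. The only bookkeeping point you raise—the enlargement of $S$—is indeed harmless, and the part about the unramified local $L$-factor ratios lying in CM fields is justified by the algebraicity of Satake parameters and the fact that $\Q(\Pi)$ is totally real or CM (cited in the paper from \cite{blaharam}).
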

In the rest of this introduction we explain briefly the key ideas in our proof of Theorem \ref{t:mainintro}. The starting point here is an explicit integral representation (or pullback formula) proved in \cite[Theorem 6.4]{PSS17}. This formula roughly says that
\begin{equation}\label{e:formulaold} \tst
 \left\langle E\left(\mat{-}{}{}{Z_2}, \frac{r+n-k_1}2 \right), \  F\right \rangle \approx L^S(r,\Pi \boxtimes \chi) F(Z_2)
\end{equation} where $E(Z, s)$ is a certain Eisenstein series on $\H_{2n}$ of weight $k_1$, the function $F$ on $\H_n$ corresponds to a smooth modular form of weight $k_1$ associated to a particular choice of archimedean vector inside $\Pi_\infty$, the element $\mat{Z_1}{}{}{Z_2}$ of $\H_{2n}$ is obtained from the diagonal embedding of $\H_n \times \H_n$, the Petersson inner product $\langle \ , \ \rangle$ is taken with respect to the $Z_1$ variable, and the symbol $\approx$ indicates that the two sides are equal up to some (well-understood) explicit factors. The Eisenstein series $E(Z, s)$ arises from the degenerate principal series obtained by inducing the character $\mat{A}{X}{}{v\,^t\!A^{-1}} \mapsto  \chi(v^{-n} \det(A)) |v^{-n} \det(A)|^{2s + k}$ of the Siegel parabolic of $\GSp_{4n}$; we also refer the reader to \cite[(120)]{PSS17} for an explicit formula for $E(Z, s)$ (denoted $E_{k,N}^{\chi}(Z,s)$ there).

Using the shorthand $\k = (k_1, k_2, \ldots, k_n)$, we prove in Section \ref{s:nearholofull} that $F = \D_\k(G)$ where $G$ is the vector-valued holomorphic Siegel cusp form on $\H_n$ of archimedean type $\k$ mentioned earlier, and $\D_\k$ is a certain differential operator obtained from a Lie algebra element.

%Let $\mathfrak{p}_{\chi_\k}$ denote the projection operator from the space of nearly holomorphic forms of weight $k_1$ onto the isotypic subspace corresponding to the holomorphic discrete series representations of archimedean type $\k$. Then we may rewrite \eqref{e:formulaold} as $$\tst \left\langle \mathfrak{p}_{\chi_\k}\left(E(\mat{-}{}{}{Z_2}, \frac{r+n-k_1}2) \right), \  \D_\k(G)\right \rangle \approx L^S(r,\Pi \boxtimes \chi) F(Z_2).$$
%(We remark here that the restriction $r \ge n+2$ in Theorem \ref{t:mainintro} is because in this range the function $\tst E(\mat{Z_1}{}{}{Z_2}, \frac{r+n-k_1}2)$ is cuspidal in each variable; see also Remark \ref{rem:restriction}.)

Via a linear algebra argument, the proof of Theorem \ref{t:mainintro} can now be reduced to the heart of this paper, which is to show the $\Aut(\C)$-equivariance of  $\D_\k$ when viewed as an operator on the space of nearly holomorphic cusp forms. This equivariance is a priori not clear, as this operator is defined abstractly. To achieve this, we re-interpret the space of nearly holomorphic modular forms of degree $n$ in the representation theoretic language, generalizing our work in the $n=2$ case done in \cite{PSS14}. After laying the necessary Lie-algebraic foundations in Section \ref{s:Lie}, we show in Section \ref{s:ident} that the space of (vector-valued) nearly holomorphic modular forms can be identified with the space of $\p_{-}$-finite automorphic forms, where $\p_{-}$ (see Section \ref{basic-lie-alg-sec}) is the span of the non-compact negative roots of $\Sp_{2n}(\R)$. We go on to define the operator $\D_\k$ representation-theoretically by choosing suitable Lie algebra elements; the crucial $\Aut(\C)$-equivariance property of this operator (Proposition \ref{p:uoper}) is proved via a careful arithmetic analysis of the Lie algebra. A novel aspect of our methodology is that we expand the domain of functions under consideration from nearly holomorphic modular forms to functions which have a Fourier expansion involving polynomials in $\Im(Z)^{\pm \frac12}$ (see Section \ref{s:rationalnice}). Along the way, we prove several new results concerning nearly holomorphic Siegel cusp forms and $\p_{-}$-finite automorphic forms, including a structure theorem (Proposition \ref{AA0nfindecomp2prop}) and a finiteness result for the dimension of the space of all nearly holomorphic cusp forms of given level and archimedean type (Proposition \ref{l:fact2}). These results shed new light on nearly holomorphic forms from the representation-theoretic point of view and should be of independent interest.

Our approach differs from previous works in the direction of Theorem \ref{t:mainintro} in the vector-valued setup such as \cite{kozima}. There one works directly with holomorphic vector-valued Siegel cusp forms and invokes vector-valued Eisenstein series, relying on arithmetic properties of differential operators on tensor products of vector-valued functions \cite{bochsatyama, ibu99}. The relevant pullback formula for that approach has been recently worked out in more generality by Liu \cite{Liu16, Liu19}. In contrast, our pullback formula \eqref{e:formulaold} involves only scalar-valued (nearly holomorphic) forms and our differential operators correspond to elements of the universal enveloping algebra of the Lie algebra of $\Sp_{2n}(\R)$. Thus, our proofs of the algebraicity theorems hinge  on understanding the arithmetic properties of nearly holomorphic cusp forms and of the relevant elements in the Lie algebra. A byproduct of our method is an explicit formula for the scalar-valued function $E(\mat{Z_1}{}{}{Z_2}, \frac{r+n-k}2)$ --- whose $p$-integrality properties were recently proved by us in \cite{PSS19} ---  as a bilinear sum  over  nearly holomorphic Siegel cusp forms, with the coefficients equal to critical values of $L$-functions (see \eqref{GkNdefeq}, \eqref{keyidentity}). We hope to pursue further arithmetic applications of this formula elsewhere.

\subsection*{Acknowledgements} We thank the anonymous referee for helpful comments and insightful questions which have significantly improved this paper. A.S. acknowledges the support of the Leverhulme Trust Research Project Grant RPG-2018-401.

\subsection{Notation}\label{s:notation}
For any ring $R$, we let $M_n(R)$ denote the ring of $n$-by-$n$ matrices over $R$. We let $M^\sym_n(R)$ denote the submodule of symmetric matrices. For a commutative ring $R$, we let
$$
 \GSp_{2n}(R)=\{g\in\GL_{2n}(R)\ | \ ^tgJ_ng=\mu_n(g)J_n,\:\mu_n(g)\in R^\times\},\qquad J_n=\mat{}{I_n}{-I_n}{}.
$$
The symplectic group $\Sp_{2n}(R)$ consists of those elements $g\in \GSp_{2n}(R)$ for which the multiplier $\mu_n(g)$ is $1$. We let $\Gamma_{2n}(N)\subset \Sp_{2n}(\Z)$ be the preimage of the identity element in $\Sp_{2n}(\Z/N\Z)$ under the natural surjection  $\Sp_{2n}(\Z) \rightarrow \Sp_{2n}(\Z/N\Z)$.

The Siegel upper half space of degree $n$ is defined by
  $$
   \H_n= \{ Z \in M_n(\C) \ | \ Z = {}^tZ,\:i( \overline{Z} - Z) \text{ is positive definite}\}.
  $$
For $g=
\left[\begin{smallmatrix} A&B\\ C&D \end{smallmatrix}\right] \in \Sp_{2n}(\R)$, $Z\in \H_n$, let $J(g,Z) = CZ + D.$ For a finite dimensional representation $(\rho, V)$ of $\GL_n(\C)$, a function $f \in C^\infty(\H_n, V)$, and $g \in \Sp_{2n}(\R)$, we define the function $f|_\rho g \in C^\infty(\H_n, V)$ by $(f|_\rho g)(Z) = \rho(J(g, Z))^{-1}f(gZ)$.

We let $\g_n=\mathfrak{sp}_{2n}(\R)$ be the Lie algebra of $\Sp_{2n}(\R)$ and $\g_{n,\C} =\mathfrak{sp}_{2n}(\C)$ the complexified Lie algebra.  We let $\mathcal{U}(\mathfrak{g}_{n,\C})$ denote the  universal enveloping algebra and let $\mathcal{Z}_n$ be its center. For all smooth functions $f : \Sp_{2n}(\R) \rightarrow V$ where $V$ is a complex vector space, and $X \in \g_n$, we define $(Xf)(g)=\frac{d}{dt}\big|_0f(g\exp(tX))$. This action is extended $\C$-linearly to $\g_{n,\C}$. Further, it is extended to all elements $X \in \mathcal{U}(\mathfrak{g}_{n,\C})$ in the usual manner. Unless there is a possibility of confusion, we will  omit the subscript $n$  and freely use the symbols $\g$, $\g_\C$, $\mathcal{U}(\mathfrak{g}_{\C})$, $\mathcal{Z}$, etc.

We let $\A=\A_\Q$ denote the ring of adeles of $\Q$. The symbol $\mathfrak{f}$ denotes the set of finite places (i.e., primes) of $\Q$ and the symbol $\A_\f$ denotes the finite adeles. We define automorphic forms and representations as in \cite{BorelJacquet1979}.  All our automorphic representations are over $\A$ and all our  $L$-functions are normalized so that the expected functional equation takes $s \mapsto 1-s$. All automorphic representations are assumed to be irreducible. Cuspidal automorphic representations are assumed to be unitary. Each cuspidal representation $\pi$ of $G(\A)$  is isomorphic to a restricted tensor product $\otimes\pi_v$, where $\pi_v$ is an irreducible, admissible, unitary representation of $G(\Q_v)$.
Given an automorphic representation $\pi$ and a set of places $S$ of $\Q$,  we let $L^S(s, \pi)=\prod_{v\notin S} L(s, \pi_v)$ be the associated global $L$-function where the local factors coming from the places in $S$ are omitted. For a positive integer $N$, we denote $L^{N}(s, \pi):= L^{S_N}(s, \pi)$ where $S_N$ consists of the primes dividing $N$ and the place $\infty$.

We say that a character $\chi=\prod\chi_p$ of $\Q^\times \bs
\A^\times$ is a \emph{primitive Dirichlet character} if $\chi$ is of finite order, or equivalently, if $\chi_\infty$ is trivial
on $\R_{>0}$. We let $\cond(\chi)$ denote the conductor of such a $\chi$, and we identify $\cond(\chi)$ with a positive integer.  A primitive Dirichlet character $\chi$ as defined above  gives rise to a homomorphism $\tilde{\chi}:(\Z/\cond(\chi)\Z)^\times\rightarrow \C^\times$, via the formula $\tilde{\chi}(a) = \prod_{p|\cond(\chi)} \chi_p^{-1}(a)$, and the association $\chi \mapsto \tilde{\chi}$ is a bijection between primitive Dirichlet characters in our sense and in the classical sense. We define the Gauss sum $\Ga(\chi)$ by
$\Ga(\chi) = \sum_{n \in  (\Z/\cond(\chi)\Z)^\times }\tilde{\chi}(n) e^{2 \pi i
n/\cond(\chi)}.$

For a complex representation $\pi$ of some group $H$ and an automorphism $\sigma$  of $\C$, there is a complex representation ${}^\sigma \pi$ of $H$ defined as follows. Let $V$ be the space of $\pi$ and let $V'$ be any vector space such that $t: V \rightarrow V'$ is a $\sigma$-linear isomorphism (that is, $t(v_1 +v_2) = t(v_1) + t(v_2)$ and $t(\lambda v) = \sigma(\lambda) t(v)$). We define the representation $( \leftexp{\sigma}{\pi}, V')$ by $\leftexp{\sigma}{\pi}(g) = t \circ \pi(g) \circ t^{-1}$. It can be shown easily that the representation $\leftexp{\sigma}{\pi}$ does not depend on the choice of $V'$ or $t$. We define $\Q(\pi)$ to be the fixed field of the set of all automorphisms $\sigma$ such that $\leftexp{\sigma}{\pi} \simeq \pi$.

\section{Lie algebra action and differential operators}\label{s:Lie}
Throughout this section, $n$ will be a fixed positive integer. In this section, we will obtain some information on the action of the Lie algebra of $\Sp_{2n}(\R)$ and the corresponding differential operators acting on functions on the Siegel upper half space. We will also explain the relation between functions on the Siegel upper half space and on the group $\Sp_{2n}(\R)$.

\subsection{Basics on the Lie algebra of \texorpdfstring{$\Sp_{2n}(\R)$}{}}\label{basic-lie-alg-sec}
Recall that
%\footnote{As indicated in Section \ref{s:notation}, we omit the subscript $n$ from the Lie algebra notations, since $n$ is fixed.}
$\g$ is the Lie algebra of $\Sp_{2n}(\R)$. Let $K_\infty$ denote the maximal compact subgroup of $\Sp_{2n}(\R)$ consisting of matrices of the form $\mat{A}{B}{-B}{A}$. We identify $K_\infty$ with $U(n)$ via $\mat{A}{B}{-B}{A} \mapsto A+iB$. Let $\mathfrak k$ be the Lie algebra of $K_\infty$ and $\mathfrak k_\C$ denote the complexification of $\mathfrak k$. Then we have the Cartan decomposition
$$\g_\C = \mathfrak k_\C \oplus \p_\C,$$
for some subspace $\p_\C$ of $\g_\C$. The complex structure of $\Sp_{2n}(\R)/K_\infty$ determines the decomposition $\p_\C = \p_+ \oplus \p_-$ satisfying (see \cite[p.~245]{Sh90})
\begin{equation}\label{pkcommeq}
 [\mathfrak k_\C, \p_{\pm}] = \p_{\pm}, \quad [\p_+, \p_+] = [\p_-, \p_-] = 0, \quad [\p_+, \p_-] = \mathfrak k_\C.
\end{equation}
The explicit description of $\p_{\pm}$ is given in \cite[p.~260]{Sh90} as follows. Set $T :=M_n^\sym(\C)$. As in \cite[p.~260]{Sh90}, we define the $\C$-linear isomorphisms $\iota_{\pm}$ from $T$ to $\p_{\pm}$ by
\begin{equation}\label{iota-defn}\tst
 \iota_{\pm}(u)=\frac 14 \mat{\mp  iu}{u}{u}{\pm  iu} \in \p_{\pm} \quad \text{ for } u \in T.
\end{equation}
 Let $e_{i,j}$ be the $n \times n$ matrix whose $(i, j)^{\rm th}$ entry is $1$ and all others are $0$. For $1\le i, j \le n$, we define, as in \cite{Mau12},
\begin{equation}\label{BijEijdefeq}
 B_{i,j} = \mat{\frac 12(e_{i,j}-e_{j,i})}{\frac{-i}2(e_{i,j}+e_{j,i})}{\frac{i}2(e_{i,j}+e_{j,i})}{\frac 12(e_{i,j}-e_{j,i})},
 \quad E_{\pm, i, j} =  E_{\pm, j, i} = \mat{\frac 12(e_{i,j}+e_{j,i})}{\frac{\pm i}2(e_{i,j}+e_{j,i})}{\frac{\pm i}2(e_{i,j}+e_{j,i})}{\frac{-1}2(e_{i,j}+e_{j,i})}.
\end{equation}
Then $\{B_{i,j} : 1 \leq i, j \leq n\}$ is a basis for $\mathfrak k_\C$ and $\{E_{\pm, i, j} : 1 \leq i \le j \leq n\}$ is a basis for $\p_{\pm}$.

A Cartan subalgebra $\mathfrak{h}_\C$ of $\mathfrak{k}_\C$ (and of $\mathfrak{g}_\C$) is spanned by the $n$ elements $B_{i,i}$, $1\le i \le n$. If $\lambda$ is in the dual space $\mathfrak{h}_\C^*$, we identify $\lambda$ with the element $(\lambda(B_{1,1}),\lambda(B_{2,2}), \ldots, \lambda(B_{n,n}))$ of $\C^n$. In this way we identify  $\mathfrak{h}_\C^\ast$ with $\C^n$. If $\mathfrak{k}_\C$ acts on a space $V$, and $v\in V$ satisfies $B_{i,i}v=\lambda_i v$  for $\lambda=(\lambda_1, \lambda_2, \ldots \lambda_n) \in\C^n$, then we say that $v$ has \emph{weight} $\lambda$.

We let $\Lambda =\Z^n \subset \mathfrak{h}_\C^\ast$ be the weight lattice, consisting of the integral weights. Let $V$ be a finite-dimensional $\mathfrak{k}_\C$-module. Then this representation of $\mathfrak{k}_\C$ can be integrated to a representation of $K_\infty$ if and only if all occurring weights lie in $\Lambda$. The isomorphism classes of irreducible $\mathfrak{k}_\C$-modules, or the corresponding irreducible representations of $K_\infty$, are called \emph{$K_\infty$-types}.

We take a system of positive roots to be
\[
 \Phi^+=\{e_i - e_j \mid 1 \leq i < j \leq n\} \cup \{e_i + e_j \mid 1 \leq i \leq j \leq n\},
\]
where $e_i$ is the element of $\C^n$ with 1 in the $i$'th position and 0 everywhere else. (Concretely, $e_i(B_{j,j}) = \delta_{i,j}$). The positive compact roots are $\{e_i - e_j\mid1\leq i<j\leq n\}$, and the corresponding root vectors are $\{B_{i,j}\mid1\le i<j\le n\}$. Let $V$ be a $K_\infty$-type, which we think of as a $\mathfrak{k}_\C$-module. A non-zero vector $v\in V$ is called a \emph{highest weight vector} if
$$
 B_{i,j} v =0 \text{ for all } 1\le i<j\le n.
$$
Such a vector $v$ is unique up to scalars. Let $\k=(k_1, k_2, \ldots, k_n)$ be its weight. Then the $k_i$ are integers and $k_1 \ge k_2 \ge \ldots \ge k_n$; we say that $\k$ is the highest weight of $V$. We let $\Lambda^+ \subset \Lambda$ be the set of tuples $\k=(k_1, k_2, \ldots, k_n)$ such that each $k_i \in \Z$ and $k_1 \ge k_2 \ge \ldots \ge k_n$. Then the elements of $\Lambda^+$ parametrize the irreducible representations of $K_\infty$ via their highest weights, as above. We let $\Lambda^{++}$ consists of those $\k=(k_1, \ldots, k_n) \in \Lambda^+$ with $k_n\geq1$.

\subsection{Generators for the center of the universal enveloping algebra}%\label{generators-sec}
	
For notational convenience we define matrices $B$, $E_+$, $E_-$ with matrix valued entries as follows:
	\[
	B = (B_{k,\ell})_{k,\ell} \in M_{n}(M_{2n}(\C)), \qquad E_{\pm} = (E_{\pm,k,\ell})_{k,\ell} \in M_n^\sym(M_{2n}(\C)).
	\]
	Let $B^* = (B_{\ell,k})_{k,\ell}$ be the transpose of $B$. 	We consider \emph{words} in the ``letters" $B$, $B^*$, $E_+$ and $E_-$ satisfying the following five conditions\footnote{Conditions 1-4 do not apply to the  last letter of a word.}.
	\begin{enumerate}
	\item $E_+$ is followed by $E_-$ or $B^*$.
	\item $E_-$ is followed by $E_+$ or $B$.
	\item $B$ is followed by $E_+$ or $B$.
	\item $B^*$ is followed by $E_-$ or $B^*$.
	\item $E_+$ and $E_-$ occur with the same multiplicity.
	\end{enumerate}

For a word $w = X_1 \cdots X_m$, we denote by $\Tr(w)$ the trace as the $M_{2n}(\C)$-valued matrix, which we identify with an element of $\U(\g_\C)$. Let $L(w)$ equal the sum of the number of times $E_-B$ and $BE_+$ occur isolatedly in $w$ counted cyclicly. By isolatedly, we mean that the $E_-B$ and $BE_+$ concerned must not intersect each other, so, for e.g., $L(E_-BE_+B^\ast) = 1$ but $L(E_-BBE_+) = 2$. And cyclically means that we have to take into account that the trace is cyclically invariant, so, e.g., $L(E_+E_-BB) = L(E_-BBE_+) = 2$. The following theorem is the main result of \cite{Mau12}.

\begin{theorem}[Theorem 2.2 of \cite{Mau12}]\label{t:Mau12}
	For $r \in \Z_{>0}$, put
	\[
	D_{2r} = \sum_w (-1)^{L(w)} \Tr(w),
	\]
	where the sum is over all words $w$ of length $2r$ satisfying the conditions (1) to (5) above. Then the center $\mathcal{Z}$ of $\U(\g_\C)$ is generated by the $n$ elements $D_2,\ldots,D_{2n}$ as an algebra over $\C$.
\end{theorem}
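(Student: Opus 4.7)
The plan is to split the proof into two largely independent parts: first, that each $D_{2r}$ lies in the center $\mathcal{Z}$, and second, that $D_2,\ldots,D_{2n}$ generate $\mathcal{Z}$ as an algebra. The starting point for both parts is the set of commutation relations among the basis elements $B_{i,j},\,E_{+,i,j},\,E_{-,i,j}$ that follow from \eqref{pkcommeq}: bracketing a $B$ with an $E_\pm$ stays in $\p_\pm$, bracketing two $E_+$'s or two $E_-$'s vanishes, and $[E_+,E_-]$ lands in $\mathfrak{k}_\C$. Recording these relations in the matrix-valued notation of the theorem (so that $B$, $B^*$, $E_+$, $E_-$ satisfy clean matrix identities after taking traces) is the foundation for the combinatorial manipulations below.

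For centrality, it suffices to verify $[X,D_{2r}]=0$ for $X$ running over $\p_+$, $\p_-$, and $\mathfrak{k}_\C$, since these span $\g_\C$. I would fix such an $X$ and expand the commutator $[X,\Tr(w)]$ inside each summand using the Leibniz rule letter-by-letter along $w = X_1 \cdots X_{2r}$. Each elementary bracket $[X,X_i]$ either preserves the ``type'' of the letter $X_i$ (if $X\in\mathfrak{k}_\C$) or shifts it between the four types $B,B^*,E_+,E_-$ (if $X\in\p_\pm$). The cyclic admissibility conditions (1)--(4) are then exactly the constraints needed for each bracketed insertion in one word to match, with the opposite sign, a bracketed insertion in a \emph{paired} neighboring word obtained by toggling an adjacent letter; the sign $(-1)^{L(w)}$ is precisely what tracks these toggles, since $L$ counts the isolated $E_-B$ and $BE_+$ patterns that change parity under such a move. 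The main work is therefore to set up an explicit sign-reversing bijection on the set of (word, position) pairs that appear in $[X,D_{2r}]$, and to check that condition (5) (equal multiplicity of $E_+$ and $E_-$) is preserved by the bijection. This is the step I expect to be the main obstacle: all the combinatorics lives here, and the correctness of the precise definition of $L(w)$ depends on getting the cyclic pairing right.

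For generation, I would pass to the graded picture. By the symmetrization map $\U(\g_\C)\cong S(\g_\C)$ as $\mathfrak{k}_\C$-modules, the principal symbol of $D_{2r}$ is obtained from the same trace expression with $B,B^*,E_\pm$ viewed as commuting $M_{2n}$-valued matrices; that symbol is a $\Sp_{2n}(\C)$-invariant polynomial on $\g_\C$ of degree $2r$, and a direct computation identifies it (up to a nonzero scalar) with $\Tr(Y^{2r})$, where $Y\in\g_\C$ is written in terms of its $\mathfrak{k}_\C$, $\p_+$, $\p_-$ components. By the classical description of invariants for type $C_n$, the polynomials $\Tr(Y^{2r})$, $r=1,\ldots,n$, are algebraically independent generators of $S(\g_\C)^{\Sp_{2n}(\C)}$. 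Combining this with the Chevalley--Harish-Chandra isomorphism $\mathcal{Z}\cong S(\g_\C)^G$ (or its graded version), we get that the symbols of $D_2,\ldots,D_{2n}$ generate the associated graded of $\mathcal{Z}$, hence, by a standard filtered-to-graded argument, the $D_{2r}$ themselves generate $\mathcal{Z}$.

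In summary, the proof has a hard combinatorial core (centrality via a sign-reversing bijection on words) and a softer invariant-theoretic closing (symbol computation plus the known generators of $S(\g_\C)^{\Sp_{2n}}$). The conditions (1)--(5) in the statement and the specific choice of the exponent $L(w)$ are not natural on their own; their role is only visible in the commutator bookkeeping, which is why centrality is the step most likely to conceal subtle sign errors.
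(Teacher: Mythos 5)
This statement is cited in the paper as Theorem 2.2 of \cite{Mau12}; the paper supplies no proof of its own, so there is nothing internal to compare your sketch against. Judged on its own terms, your two-part outline (centrality, then generation) is the natural high-level strategy, but as written it has genuine gaps in both halves.

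For centrality, you correctly reduce to checking $[X,D_{2r}]=0$ for $X$ in $\p_+$, $\p_-$, $\mathfrak{k}_\C$, and you correctly identify that the content lies in a sign-reversing involution on the terms produced by the Leibniz expansion of $[X,\Tr(w)]$. But you never actually produce the involution, and you concede that you do not know whether the specific quantity $L(w)$ (the cyclic count of \emph{isolated} occurrences of $E_-B$ and $BE_+$) is the right sign. This is not a detail one can defer: the admissibility conditions (1)--(5) and the exponent $L(w)$ are precisely what make the cancellation close up, and without exhibiting the pairing one has not shown that $D_{2r}\in\mathcal{Z}$. In particular, the subtlety that $L$ counts \emph{isolated} patterns \emph{cyclically} (e.g.\ $L(E_-BE_+B^*)=1$ but $L(E_-BBE_+)=2$) is exactly the sort of thing that a loose ``Leibniz plus pairing'' argument will get wrong if the bijection is not written down.

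For generation, the route through principal symbols, the identification of the symbol with a classical trace-power invariant, and the filtered-to-graded reduction is standard and in spirit correct. However, the key computation --- that the symbol of $D_{2r}$, after summing the admissible words with signs $(-1)^{L(w)}$, equals a nonzero multiple of $\Tr(Y^{2r})$ on $\g_\C$ --- is asserted, not shown. Because conditions (1)--(5) discard many words and $(-1)^{L(w)}$ is not identically $+1$, it is not obvious a priori that the surviving signed sum collapses to the full trace power rather than to something degenerate or to a different linear combination of $\Tr(Y^{2}),\ldots,\Tr(Y^{2r})$; one would at least need to check that the leading term in a suitable grading is nonvanishing. Also, a small imprecision: the Harish-Chandra isomorphism identifies $\mathcal{Z}$ with $S(\mathfrak{h}_\C)^{W,\cdot}$, while the graded picture you want is the Chevalley restriction isomorphism $S(\g_\C)^{G}\to S(\mathfrak{h}_\C)^W$ together with the fact that $\mathrm{gr}\,\mathcal{Z}\cong S(\g_\C)^G$; your phrasing conflates the two, though the intended argument is recoverable. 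In short: the architecture is right, but neither the combinatorial core nor the symbol identification is actually carried out, so the proposal is an outline rather than a proof of the cited theorem.
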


For our purposes, we will only need the following corollaries of the theorem above.

\begin{corollary}\label{Center-gener-cor}
  Let $1 \le r \le n$, and let $D_{2r}$ be as in Theorem \ref{t:Mau12}. Then there exists an expression of the form
  $$
   D_{2r} = \sum_{i=1}^t c_i H^{(i)}_{1}\cdots H^{(i)}_{r_i} X^{(i)}_1\cdots X^{(i)}_{p_i} Y^{(i)}_1\cdots Y^{(i)}_{q_i} E^{(i)}_{+,1}\cdots E^{(i)}_{+,s_i} E^{(i)}_{-,1}\cdots E^{(i)}_{-,s_i}.
  $$
  Above, $t\in \Z_{>0}$, and for each $i$, we have $c_i \in \Z$, $p_i, q_i, r_i, s_i \in \Z_{\ge 0}$. Moreover, each $H^{(i)}_k$ is equal to $B_{a,a}$ for some $a$, each $X^{(i)}_k$ is equal to $B_{a,b}$ for some $a < b$, each $Y^{(i)}_k$ is equal to $B_{a,b}$ for some $a > b$,  each $E^{(i)}_{+,k}$ is equal to some $E_{+,a,b}$, and each $E^{(i)}_{-,k}$ is equal to some $E_{-,a,b}$.
\end{corollary}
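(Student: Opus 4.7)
The plan is to start from Theorem \ref{t:Mau12}, which presents $D_{2r}$ as a signed sum of traces of words in $B$, $B^*$, $E_+$, $E_-$, and then to normalize each resulting monomial via the Poincar\'e--Birkhoff--Witt (PBW) theorem using an ordered basis tailored to the statement. First I would expand each trace $\Tr(w)$ as an iterated sum over matrix indices, thereby writing $D_{2r}$ as a $\Z$-linear combination of (unordered) monomials in the basis $\{B_{i,j},\, E_{+,a,b},\, E_{-,a,b}\}$ of $\g_\C$. By condition~(5) on the words $w$, each such monomial carries the same number of $E_+$-type and $E_-$-type letters.

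Next I would fix the PBW ordering in which the diagonal generators $B_{a,a}$ (the $H$'s) come first, then the strictly upper-triangular $B_{a,b}$ with $a<b$ (the $X$'s), then the strictly lower-triangular $B_{a,b}$ with $a>b$ (the $Y$'s), then the $E_{+,a,b}$'s, and finally the $E_{-,a,b}$'s. Applying PBW in this ordering rewrites each unordered monomial as a $\Z$-linear combination of monomials in the exact shape asserted by the corollary; integrality of the coefficients $c_i$ follows from the integrality of the structure constants of $\g_\C$ in this basis, visible from \eqref{BijEijdefeq} and \eqref{pkcommeq}.

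The key point to verify is that the equality of $E_+$-count and $E_-$-count is preserved in every surviving monomial after reordering, so that the exponent $s_i$ in the statement is well-defined. For this I would trace through the elementary commutations: swapping $BE_{\pm}\leftrightarrow E_{\pm}B$ leaves the $E_{\pm}$-count unchanged, since the correction $[B,E_{\pm}]\in \p_{\pm}$ is still a single $E_{\pm}$-letter; swapping $E_+E_-\leftrightarrow E_-E_+$ produces a correction $[E_+,E_-]\in \mathfrak{k}_\C$ that decreases the $E_+$- and $E_-$-counts by the same amount; and rearrangements among the $B_{i,j}$'s take place entirely inside $\mathfrak{k}_\C$ and create no new $E_\pm$'s. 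An induction on the total non-compact length, followed by an induction on the number of inversions among the compact letters, therefore produces the stated normal form.

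The main obstacle will be purely combinatorial bookkeeping: one has to check that the invariant $\#E_+=\#E_-$ within each monomial is preserved at every stage of the rearrangement, and that the compact block can be arranged as $H\cdots H\,X\cdots X\,Y\cdots Y$ using only the $\mathfrak{gl}_n$-style commutation relations on the $B_{i,j}$'s. Since these compact reorderings never introduce $E_\pm$-letters, the induction closes without affecting the $E_+E_-$-block at the right, yielding the desired expression for $D_{2r}$.
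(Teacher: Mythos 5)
Your proposal is correct and follows essentially the same route as the paper: expand the traces from Theorem~\ref{t:Mau12} into an integer linear combination of monomials in the basis $\{B_{i,j}, E_{+,a,b}, E_{-,a,b}\}$, then reorder each monomial into the block form $H\cdots X\cdots Y\cdots E_+\cdots E_-\cdots$ using the explicit commutation relations, while observing that these moves preserve the equality $\#E_+=\#E_-$ (the $[E_+,E_-]$-correction drops one of each, and all other corrections leave the $E_\pm$-counts unchanged) and keep the coefficients integral because the structure constants are integers. The paper spells this out as an explicit sequence of moves ($E_-$ to the far right, then $E_+$, then the $Y$'s, then the $X$'s) rather than invoking PBW, but this is only a difference in presentation, not in substance.
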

\begin{proof} Using Theorem \ref{t:Mau12}, we can write $D_{2r}$ as a $\Z$-linear combination of words in $B_{*,*}$, $E_{+,*,*}$, and $E_{-,*,*}$ where there are an equal number of $E_{+,*,*}$, and $E_{-,*,*}$ in each word. We now use the Lie bracket relations (see Section 1 of \cite{Mau12}):
\[ [E_{+,i,j}, E_{+,k,\ell}] = 0, \quad [E_{-,i,j}, E_{-,k,\ell}] = 0,\]
\[[E_{+,i,j}, E_{-,k,\ell}] = \delta_{i,k}B_{j,\ell}+\delta_{j,\ell}B_{i,k}+\delta_{i,\ell}B_{j,k}+\delta_{j,k}B_{i,\ell} \]
\[[B_{i,j}, E_{+,k,\ell}] = \delta_{j,k}E_{+,i,\ell} + \delta_{j,\ell}E_{+,i,k}\]
\[[B_{i,j}, E_{-,k,\ell}] = -\delta_{i,k}E_{-,j,\ell} - \delta_{i,\ell}E_{-,j, k}\]
\[[B_{i,j}, B_{k,\ell}] = \delta_{j,k}B_{i,\ell} - \delta_{i,\ell}B_{k,j}\]
to make the following moves on each word. First we move all the $E_{-,*,*}$ elements one by one to the right so that they make a block at the end. Then we move all the $E_{+,*,*}$  to the right so that they end up just before the $E_{-,*,*}$ part. Note that these two steps do not affect the equality of the number of $E_{+,*,*}$, and $E_{-,*,*}$ elements. Then we move all the $B_{a,b}, a>b$ elements to the right of the other  $B_{*,*}$ elements so that they end up just before the $E_{+,*,*}$ part. (Note by the relations above that no new $E_{+,*,*}$ or $E_{-,*,*}$ elements are generated by this step.) Finally we move the $B_{a,b}, a<b$ elements to the right of the  $B_{a,a}$ elements. This brings each word to the desired form.
\end{proof}

\begin{corollary}\label{Center-gener-cor2}
There exists an expression of the form
  $$
   D_{2r} = \sum_{i=1}^t c_i  E^{(i)}_{+,1}\cdots E^{(i)}_{+,s_i} E^{(i)}_{-,1}\cdots E^{(i)}_{-,s_i} H^{(i)}_{1}\cdots H^{(i)}_{r_i} X^{(i)}_1\cdots X^{(i)}_{p_i} Y^{(i)}_1\cdots Y^{(i)}_{q_i},
  $$
where $c_i \in \Z$, and $H^{(i)}_k$, $X^{(i)}_k$, $B_{a,b}$, $Y^{(i)}_k$, $E^{(i)}_{+,k}$ and $E^{(i)}_{-,k}$ are elements of $\g$ with the same meanings as in Corollary \ref{Center-gener-cor}.
\end{corollary}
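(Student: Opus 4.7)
The plan is to start from the expression provided by Corollary \ref{Center-gener-cor}, namely
\[
D_{2r} = \sum_{i=1}^{t} c_i\, H^{(i)}_{1}\cdots H^{(i)}_{r_i}\, X^{(i)}_1\cdots X^{(i)}_{p_i}\, Y^{(i)}_1\cdots Y^{(i)}_{q_i}\, E^{(i)}_{+,1}\cdots E^{(i)}_{+,s_i}\, E^{(i)}_{-,1}\cdots E^{(i)}_{-,s_i},
\]
and then move the $E_\pm$ blocks across the $B$-block to the left, essentially reversing the strategy that was used in the proof of Corollary \ref{Center-gener-cor}. The point is that the same bracket relations that were used there also allow the reverse reordering: the only input needed is that $[B_{i,j},E_{+,k,\ell}]$ is a linear combination of $E_{+,*,*}$ (no $B$, no $E_{-,*,*}$), and symmetrically $[B_{i,j},E_{-,k,\ell}]$ is a linear combination of $E_{-,*,*}$. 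In particular, each commutation replaces one word by two words, each still containing exactly the same number of $E_+$ letters (respectively $E_-$ letters) as before.

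First I would move each $E_{+,*,*}$ letter leftward past the entire $B$-block $H\cdots H X\cdots X Y\cdots Y$, one step at a time, using the identity $B_{i,j}E_{+,k,\ell}=E_{+,k,\ell}B_{i,j}+[B_{i,j},E_{+,k,\ell}]$. After finitely many steps the $s_i$ copies of $E_{+}$ all sit at the extreme left, and what remains to their right is a word in $B_{*,*}$'s (possibly shorter than the original $H X Y$-block) followed by the original $E_{-}$-block. Next I would perform the analogous moves for each $E_{-,*,*}$ letter, pushing it leftward past the intervening $B$-block, using $[B_{i,j},E_{-,k,\ell}]=-\delta_{i,k}E_{-,j,\ell}-\delta_{i,\ell}E_{-,j,k}$. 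Crucially, the $E_{+}$-block produced in the first step sits on the far left and is not disturbed, because the letters being moved are of type $E_{-}$ and we are not commuting them past $E_{+}$'s. At the end of these two steps every term has the shape
\[
E^{(i)}_{+,1}\cdots E^{(i)}_{+,s_i}\, E^{(i)}_{-,1}\cdots E^{(i)}_{-,s_i}\, W^{(i)},
\]
where $W^{(i)}$ is some word in the $B_{*,*}$'s and the counts $s_i$ of $E_+$ and $E_-$ letters are unchanged.

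Finally, to put each residual $B$-word $W^{(i)}$ into the prescribed form $H\cdots H\,X\cdots X\,Y\cdots Y$, I would apply exactly the reordering used at the end of the proof of Corollary \ref{Center-gener-cor}, namely successively rewrite $B_{i,j}B_{k,\ell}=B_{k,\ell}B_{i,j}+[B_{i,j},B_{k,\ell}]$ with $[B_{i,j},B_{k,\ell}]=\delta_{j,k}B_{i,\ell}-\delta_{i,\ell}B_{k,j}$, first moving all lower-triangular $B_{a,b}$ ($a>b$) to the right of the diagonal and strictly upper-triangular ones, then moving strictly upper-triangular $B_{a,b}$ ($a<b$) to the right of the diagonal $B_{a,a}$'s. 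These commutators stay inside the $B$-algebra, so the leftmost $E_+E_-$ blocks are never disturbed, and the integrality of coefficients is preserved throughout. Collecting terms yields the desired expression.

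The only step that requires care is the bookkeeping across the first two moves: one must check that commuting an $E_+$ leftward past the $B$-block never produces an $E_-$ letter, and vice versa, so the counts of $E_+$'s and $E_-$'s on the left remain equal to $s_i$ in every resulting monomial. This is immediate from the bracket formulas recalled in the proof of Corollary \ref{Center-gener-cor}, so I do not anticipate a genuine obstacle; the whole argument is a controlled reordering inside $\U(\g_\C)$ using the same toolkit as in Corollary \ref{Center-gener-cor}.
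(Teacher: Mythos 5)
Your proposal is correct and is essentially the paper's own argument, viewed from the other side: the paper says to ``move the $B_{a,b}$ elements to the very right,'' while you move the $E_\pm$ letters to the left past the $B$-block, which is the same reordering carried out in the opposite sweep direction. The key observations you isolate --- that $[B,E_+]$ lies in $\p_+$, that $[B,E_-]$ lies in $\p_-$, and that $[\mathfrak{k},\mathfrak{k}] \subseteq \mathfrak{k}$, so the $E_+/E_-$ counts are preserved and any residual $B$-word can be put back in $H X Y$ form --- are exactly what makes the paper's one-line proof work.
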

\begin{proof} This follows from the expression obtained in Corollary \ref{Center-gener-cor}, by moving the $B_{a,b}$ elements to the very right, using the Lie bracket relations.
\end{proof}
\subsection{Differential operators}
Recall that $\H_n$ is the Siegel upper half space of degree $n$. Let $(\rho, V)$ be a finite dimensional representation of $\GL_n(\C)$. Recall that $T = \{ u \in M_n(\C)\ |\ {}^tu = u\}$. Let $\{\epsilon_\nu\}_\nu$ be any $\R$-rational basis for $T$. For $u \in T$, write $u = \sum_\nu u_\nu \epsilon_\nu$ with $u_\nu \in \C$, and for $z \in \H_n$ write $z = \sum_\nu z_\nu \epsilon_\nu$ with $z_\nu \in \C$.

For a non-negative integer $e$, let $S_e(T, V)$ denote the vector space of all homogeneous polynomial maps $T\to V$ of degree $e$. %\ralf{Was: \ldots vector space of $V$-valued homogeneous polynomials of degree $e$ on the entries of $T$}.
 Note that $S_0(T, V) = V$. We can identify $S_e(T, V)$ with the symmetric elements in the space ${\rm Ml}_e(T, V)$ of $e$-multilinear maps from $T^e$ to $V$ (see Lemma 12.4 of \cite{shibook2}). We define two representations of $\GL_n(\C)$ denoted by $\rho \otimes \tau^e$ and $\rho \otimes \sigma^e$ on ${\rm Ml}_e(T, V)$ as follows. For $h \in {\rm Ml}_e(T, V), (u_1, \cdots, u_e) \in T^e$ and $a \in \GL_n(\C)$, define
\begin{align*}
 [(\rho \otimes \tau^e)(a)h](u_1, \cdots, u_e) &:= \rho(a) h({}^tau_1a, \cdots, {}^tau_ea),\\
 [(\rho \otimes \sigma^e)(a)h](u_1, \cdots, u_e) &:= \rho(a) h(a^{-1}u_1{}^ta^{-1}, \cdots, a^{-1}u_e{}^ta^{-1}).
\end{align*}
We will denote the restriction of $\rho \otimes \tau^e$ and $\rho \otimes \sigma^e$ to $S_e(T, V)$ also by the same notation.

Given $f \in C^\infty(\H_n, V)$ we can define the functions $Df, \bar{D} f, Cf, Ef$ in $C^\infty(\H_n, S_1(T,V))$ as follows.
\begin{align}
 \big((Df)(z)\big)(u) := \sum_{\nu} u_\nu \frac{\partial f}{\partial z_{\nu}}(z), &\qquad \big((\bar{D} f)(z)\big)(u) := \sum_{\nu} u_\nu \frac{\partial f}{\partial \bar{z}_{\nu}}(z),\\ \label{e:defCE}
 \big((Cf)(z)\big)(u) := 4\big((D f)(z)\big)(y u y), &\qquad \big((E f)(z)\big)(u) := 4\big((\bar{D} f)(z)\big)(yu y).
\end{align}
Here, $u = (u_\nu) \in T, z = (z_\nu) \in \H_n$ and $y = {\rm Im}(z)$. These are exactly the formulas defined in \cite[p.~92]{shibook2} with $\xi(z) = \eta(z) = 2y$ in our case. For $0 \leq e \in \Z$, we can define $D^ef$, $\bar{D}^ef$, $C^ef$ and $E^ef$ recursively, and these take values in ${\rm Ml}_e(T, V)$. For $f \in C^\infty(\H_n, V)$, let $\rho(\Xi)f\in C^\infty(\H_n, V)$ be the function defined by $z\mapsto\rho(2y)f(z)$. More generally, for $f \in C^\infty(\H_n, {\rm Ml}_e(T, V))$, let $(\rho \otimes \tau^e)(\Xi )f\in C^\infty(\H_n, {\rm Ml}_e(T, V))$ be the function defined by $z\mapsto(\rho \otimes \tau^e)(2y)(f(z))$. We then define $D^e_\rho f \in C^\infty(\H_n, S_e(T,V))$ by
\begin{equation}\label{e:DefDrho}
 D_{\rho}^e f = (\rho \otimes \tau^e)(\Xi )^{-1} (C^e (\rho (\Xi) f)).
\end{equation}
The following is Proposition 12.10 of \cite{shibook2}.
\begin{proposition}\label{DE-behav-prop}
\begin{enumerate}
\item We have $D_{\rho}^{e+1}  = D_{\rho \otimes \tau^e} D_\rho^e = D^e_{\rho \otimes \tau} D_\rho$.
\item 	For  $f \in C^\infty(\H_n, V)$ and $\alpha\in \Sp_{2n}(\R)$, define the slash action by
$$
 (f |_\rho \alpha)(z) := \rho(J(\alpha, z))^{-1} f(\alpha z).
$$
Then, for all $\alpha \in \Sp_{2n}(\R)$, we have
	\[
	D_{\rho}^e (f |_\rho \alpha) = (D_{\rho}^e f)|_{\rho \otimes \tau^e}\alpha, \qquad E^e (f |_\rho \alpha) = (E^e f)|_{\rho \otimes \sigma^e}\alpha.
	\]
\end{enumerate}
\end{proposition}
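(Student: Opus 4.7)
My plan is to establish Part (1) by direct manipulation of the definitions, and then use it (together with an analogous iteration for $E$) to reduce Part (2) to the case $e=1$, which I would handle by explicit computation invoking the symplectic transformation law for $\Im$. For Part (1), writing $D_\rho^e f = (\rho \otimes \tau^e)(\Xi)^{-1} C^e(\rho(\Xi) f)$ from the definition, the inner factor $(\rho \otimes \tau^e)(\Xi)$ in the expansion of $D_{\rho \otimes \tau^e}(D_\rho^e f)$ cancels against its inverse, leaving $(\rho \otimes \tau^{e+1})(\Xi)^{-1} C^{e+1}(\rho(\Xi) f) = D_\rho^{e+1} f$. The second identity $D_\rho^{e+1} = D^e_{\rho\otimes\tau} D_\rho$ then follows by induction on $e$: the case $e=0$ is trivial, and the inductive step applies the first identity once at level $e+1$ with input $\rho$ and once at level $e$ with input $\rho\otimes\tau$, together with associativity.

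For Part (2), Part (1) and the analogous iteration for $E$ reduce matters to the case $e=1$, namely the identities $D_\rho(f|_\rho\alpha) = (D_\rho f)|_{\rho\otimes\tau}\alpha$ and $E(f|_\rho\alpha) = (Ef)|_{\rho\otimes\sigma}\alpha$. Writing $M := J(\alpha, z) = Cz+D$, the two crucial geometric inputs are the transformation law $\Im(\alpha z) = {}^t\bar M^{-1}\,\Im(z)\,M^{-1}$ and the chain rule $(D(F\circ\alpha))(z)(u) = (DF)(\alpha z)({}^t M^{-1} u M^{-1})$, which follows from $d(\alpha z) = {}^t M^{-1}\,dz\,M^{-1}$. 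Setting $h := \rho(\Xi) f$, I would first observe the collapse formula $(D_\rho f)(z)(u) = \rho(2y)^{-1}(Dh)(z)(u)$, which follows from $y\cdot(2y)^{-1} u (2y)^{-1}\cdot y = \tfrac{1}{4}u$ and the $\C$-linearity of $(Dh)(z)$. Applying this with $f|_\rho\alpha$ in place of $f$ and using the $\Im$ law, one obtains $\rho(\Xi)(f|_\rho\alpha)(z) = \rho({}^t\bar M)\,h(\alpha z)$ (the $\rho(M^{-1})$ from the slash action cancels against the $\rho(M)$ in $\rho(2y) = \rho({}^t\bar M)\rho(2\Im(\alpha z))\rho(M)$). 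Since ${}^t\bar M$ is antiholomorphic in $z$, it commutes with $\partial/\partial z_\nu$, and the chain rule then gives $D(\rho(\Xi)(f|_\rho\alpha))(z)(u) = \rho({}^t\bar M)(Dh)(\alpha z)({}^t M^{-1} u M^{-1})$; combining this with the collapse formula and the identity $\rho(2\Im(\alpha z))^{-1} = \rho(M)\rho(2y)^{-1}\rho({}^t\bar M)$ delivers the desired equivariance. The argument for $E$ is entirely parallel, with $\sigma$ replacing $\tau$ because $\bar D$ differentiates in $\bar z$ and hence produces the contragredient transformation $u \mapsto M^{-1} u\,{}^t M^{-1}$ on the chain-rule side.

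The hard part is the careful bookkeeping of the matrix factors $\rho(M)$ and $\rho({}^t\bar M)$ (and their inverses) that arise when differentiating $y$-dependent objects; the whole calculation succeeds only because the definitions of $C$ (via the bilinear form $yuy$), of $\tau$ (via ${}^taua$), and of $\sigma$ (via $a^{-1} u\,{}^t a^{-1}$) are precisely calibrated so that these factors collapse against one another via the symplectic transformation law for $\Im$.
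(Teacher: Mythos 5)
The paper itself gives no proof of this proposition: it is imported verbatim as Proposition 12.10 of Shimura's book \cite{shibook2}, with the statement ``The following is Proposition 12.10 of \cite{shibook2}.'' So there is no in-paper argument to compare against, only the cited source.

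That said, your proposal is a correct, self-contained reconstruction of the standard argument. Part (1) is exactly the telescoping cancellation you describe: writing $D_{\rho\otimes\tau^e}(D_\rho^e f) = (\rho\otimes\tau^{e+1})(\Xi)^{-1}\,C\big((\rho\otimes\tau^e)(\Xi)\,D_\rho^e f\big)$ and substituting $(\rho\otimes\tau^e)(\Xi)\,D_\rho^e f = C^e(\rho(\Xi)f)$ yields $D_\rho^{e+1}f$ directly, and the second identity follows by the induction you outline (using the first identity with base representation $\rho\otimes\tau$). For Part (2), the reduction to $e=1$ is justified by applying the $e=1$ equivariance with $\rho\otimes\tau^e$ (resp.\ $\rho\otimes\sigma^e$) in place of $\rho$ and feeding it through the recursions $D_\rho^{e+1} = D_{\rho\otimes\tau^e}D_\rho^e$ and $E^{e+1}=EE^e$. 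Your computation of the $e=1$ case is correct: the identities $\Im(\alpha z) = {}^t\bar M^{-1}\Im(z)M^{-1}$, the cocycle formula $d(\alpha z)={}^tM^{-1}\,dz\,M^{-1}$, the collapse $(D_\rho f)(z)(u)=\rho(2y)^{-1}(Dh)(z)(u)$ (via $y\,(2y)^{-1}u(2y)^{-1}\,y = \tfrac14 u$), and the fact that ${}^t\bar M$ is $\partial/\partial z_\nu$-constant, combine to give $D_\rho(f|_\rho\alpha) = (D_\rho f)|_{\rho\otimes\tau}\alpha$. The $E$ case is parallel with $M$ and ${}^t\bar M$ swapping roles (since $\rho(M)^{-1}$ is $\partial/\partial\bar z_\nu$-constant), and the discrepancy between $\tau$ and $\sigma$ is precisely accounted for by the chain rule producing $u\mapsto {}^t\bar M^{-1}u\bar M^{-1}$ in the $\bar D$ direction, which matches $\Im(\alpha z)\,Mu\,{}^tM\,\Im(\alpha z) = {}^t\bar M^{-1}\,yuy\,\bar M^{-1}$. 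The proof is sound; the only thing to flag is that the associativity/identification step $(\rho\otimes\tau^e)\otimes\tau\cong\rho\otimes\tau^{e+1}$ (and its $\sigma$-analogue) via the natural isomorphism $\mathrm{Ml}_1(T,\mathrm{Ml}_e(T,V))\cong\mathrm{Ml}_{e+1}(T,V)$ is being used tacitly in both parts, and deserves a sentence of acknowledgment.
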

\subsection{The relationship between elements of \texorpdfstring{$\U(\g_\C)$}{} and differential operators}
Let $(\rho, V)$ be a finite-dimensional representation of $\GL_n(\C)$ and let $f \in C^\infty(\H_n, V)$. Define $f^\rho \in C^\infty(\Sp_{2n}(\R), V)$ by
\begin{equation}\label{frho-denf}
 f^\rho(g) = (f |_\rho g)(iI_n).
\end{equation}
Note that if $f$ is a modular form with respect to a discrete subgroup $\Gamma$ of $\Sp_{2n}(\Q)$, i.e., if $f|_\rho \gamma = f$ for all $\gamma \in \Gamma$, then $f^\rho$ is left $\Gamma$-invariant. Recall the maps $\iota_\pm$ defined in (\ref{iota-defn}) which map $u \in T$ to $\iota_\pm(u) \in \p_\pm$. The elements of $\p_\pm$ act on functions on $\Sp_{2n}(\R)$ in the usual way. Given any collection $\{\varepsilon_1, \ldots, \varepsilon_e\}$ of symbols $\pm$, we get a map from $\Sp_{2n}(\R)$ to ${\rm Ml}_e(T, V)$ as follows
$$
 T^e \ni (u_1, \ldots, u_e) \mapsto \big(\iota_{\varepsilon_1}(u_1) \ldots \iota_{\varepsilon_e} (u_e) f^\rho\big)(g), \quad g \in \Sp_{2n}(\R).
$$
The following proposition gives the relation between the above action of $\p_\pm$ on $V$-valued functions on the group and the differential operators $D_\rho$ and $E$ acting on $V$-valued functions on the Siegel upper half space.
\begin{proposition}\label{alg-op-reln-prop}
Let $(\rho, V)$ be a finite dimensional representation of $\GL_n(\C)$. Let $f$ be in $C^\infty(\H_n, V)$ and let $f^\rho \in C^\infty(\Sp_{2n}(\R), V)$ be the corresponding function defined in (\ref{frho-denf}). Let $u_1, \ldots, u_e \in T$, and $g \in \Sp_{2n}(\R)$.
\begin{enumerate}
\item We have
\begin{align*}
	(\iota_{+}(u_1) \ldots \iota_{+}(u_e) f^{\rho})(g)  &=  ((D_{\rho}^{e}f)^{\rho \otimes \tau^{e}}(g)) (u_1, \ldots , u_e), \\
	(\iota_{-}(u_1) \ldots \iota_{-}(u_e) f^{\rho})(g)  &=  ((E^{e} f)^{\rho \otimes \sigma^{e}}(g)) (u_1, \ldots , u_e).
\end{align*}
\item Set $\mathcal{D}_\rho^+ = D_\rho$, $\mathcal{D}_\rho^- = E$, $\mu^+ = \tau$ and $\mu^- = \sigma$. Let $\varepsilon_1, \ldots, \varepsilon_e \in \{\pm\}$. Then
  \begin{align*}
	&(\iota_{\varepsilon_1}(u_1) \ldots \iota_{\varepsilon_e} (u_e) f^{\rho})(g)\\
	&\hspace{10ex}= (\mathcal{D}^{\varepsilon_1}_{\rho\otimes\mu^{\varepsilon_2} \otimes \cdots \otimes \mu^{\varepsilon_e}} \mathcal{D}^{\varepsilon_2}_{\rho\otimes\mu^{\varepsilon_3} \otimes \cdots \otimes \sigma^{\mu_e}} \cdots \mathcal{D}^{\varepsilon_e}_{\rho} f)^{\rho \otimes \mu^{\varepsilon_1} \otimes \cdots \otimes \mu^{\varepsilon_e}}(g)(u_1,\ldots,u_e).
  \end{align*}
\end{enumerate}
\end{proposition}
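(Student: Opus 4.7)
The plan is to prove Part (2), which subsumes Part (1) upon taking all $\varepsilon_j$ equal to a single sign, by induction on $e$ after first reducing the identity to the case $g = 1$. For the reduction, the cocycle property of $J$ gives $f^\rho(gh) = ((f|_\rho g)^\rho)(h)$, so $(Xf^\rho)(g) = (X(f|_\rho g)^\rho)(1)$ for $X \in \g_\C$, and the same holds for any composition. On the right-hand side, Proposition \ref{DE-behav-prop}(2) shows that $D_\rho^e$ and $E^e$ intertwine the slash action by $g$ with the twisted representations $\rho \otimes \tau^e$ and $\rho \otimes \sigma^e$. Both sides therefore transform equivariantly under $g$, so it suffices to verify the identity at $g = 1$.

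For the base case $e = 1$, I would compute $(\iota_\pm(u)f^\rho)(1)$ directly. Since $\iota_\pm(u) \in \g_\C$ is generally not in the real form $\g$, I would split $u = u_R + iu_I$ with $u_R, u_I$ real symmetric and write $\iota_\pm(u) = X_1 + iX_2$ with $X_1, X_2 \in \mathfrak{sp}_{2n}(\R)$; then $(\iota_\pm(u)f^\rho)(1) = (X_1 f^\rho)(1) + i(X_2 f^\rho)(1)$ by $\C$-linearity of the action. The needed sub-computations are $\exp(tX_j) \cdot iI_n$ and $J(\exp(tX_j), iI_n)$ to first order in $t$; substituting into $\rho(J)^{-1}f(\exp(tX_j) \cdot iI_n)$ and differentiating at $t=0$ produces contributions involving $(Df)(iI_n)$, $(\bar Df)(iI_n)$, and $d\rho$. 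For $\iota_+(u)$ the antiholomorphic terms involving $(\bar Df)(iI_n)(\bar u)$ are expected to cancel between the $X_1$ and $iX_2$ pieces, leaving $(Df)(iI_n)(u) - \tfrac{i}{2}d\rho(u)f(iI_n)$, which one verifies matches $(D_\rho f)(iI_n)(u)$ by unwinding the definition $D_\rho f = (\rho \otimes \tau)(\Xi)^{-1}(C(\rho(\Xi)f))$ at $y = I_n$ (so that $yuy = u$ and the $\rho(2I)$ and $(\rho\otimes\tau)(2I)^{-1}$ factors offset the factor $4$ in the definition of $C$). The case $\iota_-(u)$ is symmetric: the holomorphic $D$ contributions cancel, leaving a term matching $(Ef)(iI_n)(u)$.

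For the inductive step, I would peel off the outermost $\iota_{\varepsilon_1}(u_1)$. Setting $\psi(g) := (\iota_{\varepsilon_2}(u_2)\cdots\iota_{\varepsilon_e}(u_e)f^\rho)(g)$, the inductive hypothesis identifies $\psi(g) = H^{\rho'}(g)(u_2, \ldots, u_e)$, where $H := \mathcal D^{\varepsilon_2}_{\rho \otimes \mu^{\varepsilon_3} \otimes \cdots} \cdots \mathcal D^{\varepsilon_e}_\rho f$ and $\rho' := \rho \otimes \mu^{\varepsilon_2} \otimes \cdots \otimes \mu^{\varepsilon_e}$. Since $\iota_{\varepsilon_1}(u_1)$ is a differential operator in $g$ alone, it commutes with the evaluation at $(u_2,\ldots,u_e)$; applying the base case to $H^{\rho'}$, now viewed as a function with values in the larger space $\mathrm{Ml}_{e-1}(T,V)$ equipped with slash representation $\rho'$, I obtain $(\iota_{\varepsilon_1}(u_1) H^{\rho'})(g) = (\mathcal D^{\varepsilon_1}_{\rho'} H)^{\rho' \otimes \mu^{\varepsilon_1}}(g)(u_1)$. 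Evaluating at $(u_2,\ldots,u_e)$ and invoking the natural identification $S_1(T, \mathrm{Ml}_{e-1}(T,V)) \cong \mathrm{Ml}_e(T,V)$ produces the required formula, with the $j$-th factor $\mu^{\varepsilon_j}$ acting on the $j$-th argument $u_j$.

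The main obstacle is the base case: verifying that the particular $\tfrac{1}{4}$-normalization of $\iota_\pm(u)$, combined with the real/imaginary splitting and the contribution of $\rho(J)^{-1}$, produces exactly the combination of $D$, $\bar D$, and $d\rho$-terms prescribed by the definitions of $D_\rho$ and $E$. A secondary subtlety in the induction is correctly matching the tensor ordering $\rho \otimes \mu^{\varepsilon_1} \otimes \cdots \otimes \mu^{\varepsilon_e}$ of the slash representation under the iterated identification of $S_1$-spaces with $\mathrm{Ml}$-spaces, so that the outer Lie-algebra element attaches its $\mu^{\varepsilon_1}$ on the first argument rather than the last.
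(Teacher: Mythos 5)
Your inductive step and overall architecture match the paper's: the paper also establishes the case $e=1$ and then bootstraps to general $e$ by peeling off the outermost $\iota_{\varepsilon_1}(u_1)$, applying the base case to the auxiliary function $F = \mathcal D^{\varepsilon_2}_{\rho\otimes\mu^{\varepsilon_3}\otimes\cdots} \cdots \mathcal D^{\varepsilon_e}_\rho f$ viewed as ${\rm Ml}_{e-1}(T,V)$-valued, and then evaluating in the remaining arguments. Your observation that Part (1) follows from Part (2) (via $D_\rho^e = D_{\rho\otimes\tau^{e-1}}\cdots D_\rho$, i.e.\ Proposition \ref{DE-behav-prop}(1), together with $E^{e+1}=EE^e$) is correct and reverses the paper's logical order, which derives Part (2) from Part (1), but this is a harmless reorganization. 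Your reduction to $g=1$ via $f^\rho(gh) = ((f|_\rho g)^\rho)(h)$ and the equivariance from Proposition \ref{DE-behav-prop}(2) is also valid.

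The genuine gap is the base case $e=1$, which you yourself flag as the main obstacle but never close. You write that the $(\bar Df)(iI_n)(\bar u)$ contributions ``are expected to cancel'' and that the surviving expression ``one verifies matches $(D_\rho f)(iI_n)(u)$,'' but none of this is actually carried out; and since the whole content of the proposition resides in this calculation (the induction is purely formal once $e=1$ is known), the argument as written does not prove the result. The paper discharges exactly this step by citing Proposition 7.3 of \cite{Sh90}, which is the $e=1$ identity relating the Lie algebra actions of $\iota_\pm(u)$ to $D_\rho$ and $E$. Your direct-computation route is a reasonable way to reprove Shimura's formula from scratch, and the real/imaginary split $\iota_\pm(u) = X_1+iX_2$ with $X_j\in\g$ is the right starting point; but the crucial bookkeeping — the factor $\tfrac14$ in \eqref{iota-defn}, the $\rho(J)^{-1}$ derivative, the cancellation pattern, and the match with the $(\rho\otimes\tau)(\Xi)^{-1}C(\rho(\Xi)\,\cdot)$ definition — is precisely where the proposition could fail by a wrong constant or sign, and it is left unverified. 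To close the gap, either cite \cite[Proposition 7.3]{Sh90} directly (as the paper does) or actually perform the differentiation at $g=1$ and exhibit the cancellation.
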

\begin{proof}
Part 1 follows from \cite[Proposition 7.3]{Sh90}, part 1 of Proposition \ref{DE-behav-prop}, and $E^{e+1} = EE^e$. For part 2, we proceed inductively in $e$. If $e=1$, it is already proven in part 1. Put
\[
 F = \mathcal{D}^{\varepsilon_2}_{\rho\otimes\mu^{\varepsilon_3} \otimes \cdots \otimes \mu^{\varepsilon_e}} \cdots \mathcal{D}^{\varepsilon_e}_{\rho} f.
\]
By the induction hypothesis,
\[
 F^{\rho \otimes \mu^{\varepsilon_2} \otimes \cdots \otimes \mu^{\varepsilon_e}}(g)(u_2,\ldots,u_e) = \iota_{\varepsilon_2}(u_2) \cdots \iota_{\varepsilon_e} (u_e) f^{\rho}(g),
\]
and by the $e=1$ case,
\begin{align}\label{case_r}
 \iota_{\varepsilon_1}(u_1) F^{\rho \otimes \mu^{\varepsilon_2} \otimes \cdots \otimes \mu^{\varepsilon_e}}(g) = (\mathcal{D}_{\rho \otimes \mu^{\varepsilon_2} \otimes \cdots \otimes \mu^{\varepsilon_e}}^{\varepsilon_1}F)^{\rho \otimes \mu^{\varepsilon_1} \otimes \cdots \otimes \mu^{\varepsilon_e}}(g)(u_1).
\end{align}
	Note that $F^{\rho \otimes \mu^{\varepsilon_2} \otimes \cdots \otimes \mu^{\varepsilon_e}}$ is a ${\rm Ml}_{e-1}(T, V)$-valued function on $\Sp_{2n}(\R)$.
	By substituting the tuple $(u_2,\ldots,u_e) \in T^{e-1}$ into (\ref{case_r}), the right hand side equals
	\begin{align*}
	&((\mathcal{D}_{\rho \otimes \mu^{\varepsilon_2} \otimes \cdots \otimes \mu^{\varepsilon_e}}^{\varepsilon_1}F)^{\rho \otimes \mu^{\varepsilon_1} \otimes \cdots \otimes \mu^{\varepsilon_e}}(g)(u_1))(u_2,\ldots,u_e)\\
	&\hspace{20ex}=(\mathcal{D}_{\rho \otimes \mu^{\varepsilon_2} \otimes \cdots \otimes \mu^{\varepsilon_e}}^{\varepsilon_1}F)^{\rho \otimes \mu^{\varepsilon_1} \otimes \cdots \otimes \mu^{\varepsilon_e}}(g)(u_1,u_2,\ldots,u_e).
	\end{align*}
	This equals
	\[
	(\mathcal{D}^{\varepsilon_1}_{\rho\otimes\mu^{\varepsilon_2} \otimes \cdots \otimes \mu^{\varepsilon_e}} \mathcal{D}^{\varepsilon_2}_{\rho\otimes\mu^{\varepsilon_3} \otimes \cdots \otimes \mu^{\varepsilon_e}} \cdots \mathcal{D}^{\varepsilon_e}_{\rho} f)^{\rho \otimes \mu^{\varepsilon_1} \otimes \cdots \otimes \mu^{\varepsilon_e}}(g)(u_1,\ldots,u_e).
	\]
	The left hand side of (\ref{case_r}) becomes
	\begin{align*}
	(\iota_{\varepsilon_1}(u_1) F^{\rho \otimes \mu^{\varepsilon_2} \otimes \cdots \otimes \mu^{\varepsilon_e}})(g)(u_2,\ldots,u_e)
	&= \iota_{\varepsilon_1}(u_1) (F^{\rho \otimes \mu^{\varepsilon_2} \otimes \cdots \otimes \mu^{\varepsilon_e}}(g)(u_2,\ldots,u_e)) \\
	&= \iota_{\varepsilon_1}(u_1)\iota_{\varepsilon_2}(u_2) \cdots \iota_{\varepsilon_e} (u_e) f^{\rho}(g).
	\end{align*}
This concludes the proof.
\end{proof}
\subsection{Scalar and vector-valued automorphic forms}\label{s:scalarvector}
Let $\Gamma$ be a congruence subgroup of $\Sp_{2n}(\Q)$ and let $V$ be a finite dimensional complex vector space. We let $\AA(\Gamma;V)$ denote the space of $V$-valued automorphic forms on $\Gamma\bs \Sp_{2n}(\R)$, i.e., the space of smooth $V$-valued functions on $\Sp_{2n}(\R)$ that are left $\Gamma$-invariant, $\mathcal{Z}$-finite, $K_\infty$-finite and slowly increasing. Let $\AA(\Gamma;V)^\circ \subset \AA(\Gamma;V)$ be the subspace of $V$-valued cusp forms. Let $\AA(\Gamma):= \AA(\Gamma; \C)$ and $\AA(\Gamma)^\circ:= \AA(\Gamma; \C)^\circ$ denote the usual spaces of (complex valued) automorphic forms and cusp forms on $\Gamma \bs \Sp_{2n}(\R)$.

Let $(\rho, V)$ be a rational, finite dimensional representation of $\GL_n(\C)$. We will abuse notation and use $\rho$ to denote its restriction\footnote{Recall that the restriction functor is an equivalence between the categories of  rational, irreducible, finite dimensional representations of $\GL_n(\C)$ and $U(n)$.} to  $U(n)$, as well as use $\rho$ to denote the corresponding representation of $K_\infty$ via the identification $K_\infty \overset{\sim}{\rightarrow} U(n)$ given by $\mat{A}{B}{-B}{A} \mapsto A+iB$, as defined earlier. Concretely, for $g\in K_\infty$, the element $J(g, iI_n)$ belongs to $U(n)$ and our identification means that $\rho(\bar{g})= \rho^{-1}(\iota(g)) = {\rho}(J(g, iI_n))$ for $g=\mat{A}{B}{-B}{A}\in K_\infty$, where the involution $\bar{g}$ on $K_\infty$ corresponds to complex conjugation on $U(n)$ and the anti-involution $\iota$ on $K_\infty$ corresponds to the transpose on $U(n$), i.e., $\bar{g} = \mat{A}{-B}{B}{A}$, $\iota(g) = \mat{{}^tA}{{}^tB}{-{}^tB}{{}^tA}$. The derived map, also denoted by $\iota$, is an anti-involution of $\mathfrak{gl}_n(\C)$ and of $\mathfrak{k}_\C$, and it extends naturally to an anti-involution of their respective universal enveloping algebras.

Given $f \in C^\infty(\H_n, V)$, the function $f^\rho$ defined in \eqref{frho-denf} satisfies  \begin{equation}\label{e:ktranseq}f^\rho(gk)=\rho(\iota(k))f^\rho(g)\end{equation}  for all $g\in\Sp_{2n}(\R)$,  $k\in K_\infty$. We will abuse notation and also use $\rho$ to denote the derived representation of $\mathfrak{gl}_n(\C)\simeq \mathfrak{k}_\C$, i.e.,
for $X\in \mathfrak{gl}_n(\C)$, we denote  \begin{equation}\label{e:notationlieaction}
\rho(X)v := \frac{d}{dt}\big|_0\rho(\exp(tX))v.
\end{equation} This action extends to $\U(\mathfrak{gl}_n(\C)) \simeq \mathcal{U}(\mathfrak{k}_\C)$.  It can be checked  that if $f^\rho \in C^\infty(\Sp_{2n}(\R), V)$ satisfies \eqref{e:ktranseq} then it also satisfies \begin{equation}\label{e:liealgebraactionvec}
  (Xf^\rho)(g)=\rho(\iota(X))(f^\rho(g))
\end{equation} for $X\in\mathcal{U}(\mathfrak{k}_\C)$ and $g\in\Sp_{2n}(\R)$.

We fix once and for all a \emph{rational structure} on $\rho$, i.e., we fix a $\Q$-vector space $V_\Q$ such that $V_\Q \otimes \C = V$ and such that the representation $\rho$ restricts to a homomorphism from $\GL_n(\Q)$ to $\GL_\Q(V_\Q)$.

We define
\begin{equation}\label{Arhogammadefeq}
 \AA_\rho(\Gamma) = \{f \in \AA(\Gamma; V)\ |\ f(gk)=\rho(\iota(k))f(g) \text{ for all }k\in K_\infty\}, \quad \AA_\rho(\Gamma)^\circ = \AA_\rho(\Gamma) \cap \AA(\Gamma;V)^\circ.
\end{equation}
Recall here that $\rho(\iota(k)) = \rho^{-1}(J(k, i I_n))$.

We consider the following general result.
\begin{lemma}\label{l:contraabstract}
 Let $G$ be a group, $K$ a subgroup, $(\sigma,V)$ a finite-dimensional, irreducible representation of $K$, and let $f:G\to V$ be a function satisfying
 $$
  f(gk)=\sigma(k)^{-1}f(g)\qquad\text{for all }g\in G,\:k\in K.
 $$
 Let $(\hat\sigma,\hat V)$ be the contragredient of $(\sigma,V)$.
 \begin{enumerate}
  \item  The $K$-module spanned by the right $K$-translates of $f$ is isomorphic to $\dim(V)$ copies of~$\hat\sigma$.
  \item Let $L$ be any fixed non-zero linear map from $V$ to $\C$, and define the $\C$-valued function $f_0$ on $G$ by  $f_0:= L \circ f$. Then the $K$-module spanned by the right $K$-translates of $f_0$ is isomorphic to~$\hat\sigma$.
 \end{enumerate}
\end{lemma}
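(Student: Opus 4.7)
The strategy is to translate the vector-valued picture into scalar-valued functions via pairing with elements of $\hat V$, and then to exploit irreducibility of $\sigma$ at two critical junctures. For each $v^* \in \hat V$, I would define the scalar function $F_{v^*}(g) := v^*(f(g))$. The transformation law of $f$ translates directly into $F_{v^*}(gk) = v^*(\sigma(k)^{-1}f(g)) = (\hat\sigma(k)v^*)(f(g)) = F_{\hat\sigma(k)v^*}(g)$, so the map $\Phi: \hat V \to \{\text{functions } G\to\C\}$, $v^* \mapsto F_{v^*}$, intertwines $\hat\sigma$ with right-translation by $K$. A crucial observation is that $\Phi$ is injective whenever $f \neq 0$: the image of $f$ is stable under all $\sigma(k)^{-1}$ (since $\sigma(k)^{-1}f(g) = f(gk)$ lies in the image), so its linear span is a nonzero $K$-subrepresentation of $V$ and hence equals $V$ by irreducibility, which forces any $v^*$ annihilating every $f(g)$ to be zero.

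Part (2) now follows quickly. Since $f_0 = \Phi(L)$ and $\Phi$ is $K$-equivariant, the span of $\{R(k)f_0 : k \in K\}$ equals $\Phi$ applied to the span of $\{\hat\sigma(k)L : k \in K\}$; the latter is all of $\hat V$ by irreducibility of $\hat\sigma$ together with $L \neq 0$, and the injectivity of $\Phi$ then gives an isomorphism of the $K$-module generated by the right translates of $f_0$ with $\hat\sigma$.

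For part (1), I would introduce the $K$-equivariant linear map $T: V \otimes \hat V \to \{\text{functions } G \to V\}$ defined by $T(v \otimes v^*)(g) := v^*(f(g)) \cdot v$, with $K$ acting on $V \otimes \hat V$ trivially on the first factor and via $\hat\sigma$ on the second. As a $K$-module, $V \otimes \hat V$ is manifestly a direct sum of $\dim(V)$ copies of $\hat\sigma$ (decompose using a basis of $V$), and the same argument that gave injectivity of $\Phi$ shows $T$ is injective. The heart of the proof is to identify the image of $T$ with the span $W$ of $\{R(k)f\}_{k\in K}$. Since $(R(k)f)(g) = \sigma(k)^{-1}f(g)$, a linear combination of right translates is a function of the form $g \mapsto A(f(g))$ where $A$ lies in the linear span of $\{\sigma(k)^{-1} : k \in K\}$; by the Burnside/Jacobson density theorem this span is all of $\mathrm{End}(V)$, and under the canonical identification $\mathrm{End}(V) \cong V \otimes \hat V$ the functions $g \mapsto A(f(g))$ form precisely the image of $T$. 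The main step requiring genuine input is this density argument --- the only place one leaves purely formal manipulation of the $K$-equivariance of $f$ --- after which the rest is routine bookkeeping of the $K$-actions.
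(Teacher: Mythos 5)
Your proof is correct, and it takes a genuinely different route from the paper's. The paper works \emph{intrinsically} with $W$ (the span of right translates of $f$): it constructs, for each $L\in\hat V$, an explicit surjective intertwiner $\alpha_L\colon W\to\hat V$ defined by $\alpha_L\bigl(\sum_i c_i f(\cdot k_i)\bigr)=\sum_i c_i\hat\sigma(k_i)L$, shows that $L\mapsto\alpha_L$ is an isomorphism $\hat V\to\Hom_K(W,\hat V)$, embeds $W$ into $\hat V^{\dim V}$ via $(\alpha_{L_1},\ldots,\alpha_{L_n})$, and concludes from semisimplicity of $\hat V^{\dim V}$ plus the $\Hom$-count that $W\cong\dim(V)\,\hat\sigma$. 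You instead argue \emph{extrinsically}: you build the model $V\otimes\hat V$ (trivial on $V$, $\hat\sigma$ on $\hat V$, visibly $\dim(V)$ copies of $\hat\sigma$), map it to functions via $T(v\otimes v^*)(g)=v^*(f(g))v$, prove $T$ injective from the fact that $\{f(g)\}$ spans $V$ (irreducibility plus $f\neq 0$), and identify the image of $T$ with $W$ by invoking Burnside/Jacobson density to get $\mathrm{span}_\C\{\sigma(k)^{-1}\}=\End(V)$. The density theorem is exactly what the paper's argument sidesteps (the paper instead deduces the needed structure from the $\Hom$-space isomorphism and the embedding of $W$ into a semisimple module), so your proof trades a slightly stronger algebraic black box for a more transparent model of the answer; both approaches are valid and complete, modulo the implicit hypothesis $f\neq0$ that both proofs rely on.
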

\begin{proof}
For part 1, let $W$ be the space spanned by all right $K$-translates of $f$. For $L\in\hat V$, we define a map $\alpha_L:W\to\hat V$ by
$$
 \alpha_L\Big(\sum_ic_if(\cdot k_i)\Big)=\sum_ic_i\hat \sigma(k_i)L\qquad (c_i\in\C,\:k_i\in K).
$$
It is easy to verify that $\alpha_L$ is well-defined, linear, non-zero if $L\neq0$, and satisfies
$$
 \alpha_L(k.w)=\hat\sigma(k)\alpha_L(w)\qquad\text{for all }w\in W,\:k\in K,
$$
where $k.w$ means right translation. Hence, if $L\neq0$, then $\alpha_L$ is a surjection $W\to\hat V$ that intertwines the right translation action on $W$ with $\hat\sigma$. We claim that the linear map
\begin{equation}\label{l:contraabstracteq1}
 \hat V\longrightarrow\Hom_K(W,\hat V),\qquad L\longmapsto\alpha_L,
\end{equation}
is an isomorphism. Clearly, it is injective. To see the surjectivity, let $\beta:W\to\hat V$ be an intertwining operator. Set $L:=\beta(f)$. Then it is easy to see that $\beta=\alpha_L$.

We proved that $W$ contains $n:=\dim\hat V=\dim V$ copies of $\hat\sigma$. To see that $W$ contains no other representations, let $L_1,\ldots,L_n$ be a basis of $\hat V$, and consider the map
\begin{align}\label{l:contraabstracteq2}
 W&\longrightarrow\hat V\times\ldots\times\hat V\qquad\text{($n$ copies)},\\
 w&\longmapsto(\alpha_{L_1}(w),\ldots,\alpha_{L_n}(w)),\nonumber
\end{align}
It is straightforward to verify that this map is injective.
Considering dimensions, we see that the map \eqref{l:contraabstracteq2} is in fact an isomorphism, and that $W$ consists of $n=\dim V$ copies of $\hat V$ as a $K$-module.

For part 2, we change notation and let $W$ be the space spanned by the right $K$-translates of $f_0$. Then, with similar arguments as above, we see that the map
\begin{align*}
 \alpha:\:W&\longrightarrow\hat V,\\
 \sum_ic_if_0(\cdot k_i)&\longmapsto\sum_ic_i\hat\sigma(k_i)L\qquad (c_i\in\C,\:k_i\in K)
\end{align*}
is well-defined, injective, and commutes with the $K$-action, so that $W\cong\hat V$ as $K$-modules.
\end{proof}

\begin{lemma}\label{l:contramodule}
 Assume that the representation $(\rho,V)$ of $\GL_n(\C)$ is irreducible.
 \begin{enumerate}
  \item For each $f \in \AA_\rho(\Gamma)$, the $K_\infty$-module generated by the $K_\infty$-translates of $f$ is isomorphic to $\dim(V)\rho$.
  \item Let $L$ be any fixed non-zero linear map from $V$ to $\C$, and define the $\C$-valued function $f_0$ on $\Sp_{2n}(\R)$ by  $f_0:= L \circ f$. Then the $K_\infty$-module generated by the $K_\infty$-translates of $f_0$ is isomorphic to $\rho$.
 \end{enumerate}
\end{lemma}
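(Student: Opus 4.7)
The plan is to reduce directly to Lemma \ref{l:contraabstract} by identifying the correct $K_\infty$-representation to play the role of $\sigma$ there. Given the transformation law $f(gk) = \rho(\iota(k))f(g)$ satisfied by $f\in\AA_\rho(\Gamma)$, the natural choice is $\sigma(k) := \rho(\iota(k))^{-1}$, so that $f$ satisfies $f(gk) = \sigma(k)^{-1}f(g)$, which is exactly the hypothesis of Lemma \ref{l:contraabstract} with $G=\Sp_{2n}(\R)$ and $K=K_\infty$.

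First I would verify that $\sigma$ is a genuine representation of $K_\infty$ on $V$: since $\iota$ is an anti-involution, $\iota(k_1 k_2)=\iota(k_2)\iota(k_1)$, and consequently
$$
\sigma(k_1 k_2) = \rho(\iota(k_1))^{-1}\rho(\iota(k_2))^{-1} = \sigma(k_1)\sigma(k_2).
$$
Applying Lemma \ref{l:contraabstract}(1) then immediately yields that the $K_\infty$-module spanned by the right translates of $f$ is $\dim(V)$ copies of $\hat\sigma$, and Lemma \ref{l:contraabstract}(2) applied to $f_0 = L\circ f$ gives the analogous statement for $f_0$ with a single copy of $\hat\sigma$.

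The remaining, and essentially the only substantive, step is to identify $\hat\sigma \cong \rho$ as $K_\infty$-modules. I would argue this via the explicit description of $\iota$ under the identification $K_\infty \isomto U(n)$ given by $\mat{A}{B}{-B}{A}\mapsto A+iB$: the anti-involution $\iota$ corresponds to matrix transpose on $U(n)$, and for $k \in U(n)$ the unitarity relation $k^{-1}=\overline{k}^{\,t}$ yields $\iota(k)^{-1} = (k^t)^{-1} = \overline{k}$. Thus $\sigma(k) = \rho(\overline{k})$ is simply the complex conjugate representation $\overline{\rho}$. Since $\rho|_{U(n)}$ is a finite-dimensional representation of a compact group it is unitarizable, and for unitary representations of a compact group the complex conjugate is isomorphic to the contragredient; therefore $\sigma \cong \overline{\rho} \cong \hat\rho$, whence $\hat\sigma \cong \rho$, delivering both assertions.

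The main (and quite mild) obstacle is this last identification: without it one would only conclude that the $K_\infty$-module in question is some twist of $\rho$ rather than $\rho$ itself. Invoking the standard fact that $\overline{\rho}\cong\hat\rho$ for unitary representations of compact groups settles the matter and completes the proof.
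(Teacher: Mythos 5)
Your proof is correct and takes essentially the same route as the paper's: you define $\sigma(k)=\rho(\iota(k))^{-1}$, apply Lemma \ref{l:contraabstract}, and identify $\hat\sigma\cong\rho$, where the paper simply asserts this last identification as ``well-known'' and you spell out a justification. One small terminological caution: $\sigma(k)=\rho(\overline{k})=\rho(k^{-t})$ is not literally the complex conjugate representation $\overline{\rho}$ (which acts by $k\mapsto\overline{\rho(k)}$) unless $\rho$ commutes with entrywise conjugation, but both are isomorphic to $\hat{\rho}$ for finite-dimensional representations of $U(n)$ (e.g.\ $\rho\circ(k\mapsto k^{-t})$ is a rational representation with highest weight $(-k_n,\ldots,-k_1)$), so your conclusion is sound.
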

\begin{proof}
Let $\sigma$ be the representation of $K_\infty$ (on the same representation space $V$ as of $\rho$) defined by $\sigma(k):= \rho(\iota(k)^{-1})$.  Then it is well-known that the contragredient representation $\hat{\sigma}$ is isomorphic to  $\rho$. Recall that the function $f$ satisfies $f(gk)=\sigma(k)^{-1}f(g)$ for all $g\in G,\:k\in K$. Hence our assertions follow from Lemma \ref{l:contraabstract}
\end{proof}

Recall that the set of (isomorphism classes of) irreducible rational representations $\rho$ of $\GL_n(\C)$  is parameterized via their highest weights by integers $k_1 \ge k_2 \ge \ldots \ge k_n$, i.e., elements of $\Lambda^+$.  More precisely, we say that such a $\rho$ has highest weight $(k_1, k_2, \ldots, k_n)$ if there exists $v_\rho \in V$  that, under the action of the Lie algebra $\mathfrak{k}_\C$, satisfies  $\rho(B_{i,j}) v_\rho = 0$ for $i<j$ and $\rho(B_{i,i}) v_\rho = k_i v_\rho$.  As recalled already, such a $v_\rho \in V$ is known as a highest weight vector in $V_\rho$ and is unique up to multiples. For $\k = (k_1, k_2, \ldots, k_n) \in \Lambda^+$, we  use $\rho_\k$ to denote the irreducible rational representation of $\GL_n(\C)$ (as well as of $U(n)$ and of $K_\infty$) with highest weight $\k$. We remark that the highest weight $(k,k,\ldots, k)$ corresponds to the character $\det^k$ of $\GL_n(\C)$ and the character $\det(J(g, iI_n))^{-k}$ of $K_\infty$.

It is well-known that an irreducible rational  representation $\rho$ of $\GL_n(\C)$ with highest weight  $(k_1, k_2, \ldots, k_n)$ with $k_n \ge 0$ is a polynomial representation of \emph{homogeneous degree} $k(\rho) = k_1 +k_2 +\ldots + k_n$. More generally, given any finite-dimensional (not necessarily irreducible) representation $\rho$ of $\GL_n(\C)$, we say that $\rho$ is polynomial of homogeneous degree $k(\rho)$  if $\rho$ is a direct sum of irreducible polynomial representations and
\begin{equation}\label{e:homdef}
 \rho(tg) = t^{k(\rho)} \rho(g) \text{ for all } t \in \C^\times,  \ g \in \GL_n(\C).
\end{equation}
Given an irreducible rational representation $(\rho, V)$, we fix a highest weight vector $v_\rho$ that is rational with respect to the given rational structure on $\rho$. Let $\langle , \rangle$ be the unique $U(n)$-invariant inner product on $V$ normalized such that $\langle v_\rho, v_\rho\rangle = 1$. Let $L_\rho: V \rightarrow \C$ be the orthogonal projection operator from $V$ to $\C v_\rho$ followed by the isomorphism $\C v_\rho \cong \C$. Equivalently, we may define $L_\rho(w) := \langle w, v_\rho\rangle$.

Given $\rho$ as above, let $\AA(\Gamma; \rho) \subset \AA(\Gamma)$ denote  the set of all $\C$-valued automorphic forms $f$ in $\AA(\Gamma)$ with the following properties: a) Either $f$ equals 0, or the $K_\infty$-module generated by the $K_\infty$-translates of $f$ is isomorphic to $\rho$, b) $f$ is a highest weight vector in the above module. Using the fact that the highest weight vector in $\rho$ is unique up to multiples, we see that  $\AA(\Gamma; \rho)$ is a \emph{vector-space}. One may equivalently define $\AA(\Gamma; \rho)$ as the space of all the automorphic forms $f$ satisfying $B_{i,j} f =0$ for $1\le i<j\le n$ and $B_{i,i} f =k_i f$ where $(k_1, \ldots, k_n)$ is the highest weight of $\rho$. Let $\AA(\Gamma; \rho)^\circ = \AA(\Gamma; \rho) \cap \AA(\Gamma)^\circ.$ We have the following lemma.
\begin{lemma}\label{l:Lrho}Let $(\rho, V)$ be an irreducible, rational, finite dimensional representation of $\GL_n(\C)$. The map $f\mapsto L_\rho \circ f$ gives an isomorphism of vector spaces $\AA_\rho(\Gamma) \rightarrow \AA(\Gamma; \rho)$ and $\AA_\rho(\Gamma)^\circ \rightarrow \AA(\Gamma; \rho)^\circ$.
\end{lemma}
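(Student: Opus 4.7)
The plan is to prove the map $f \mapsto L_\rho \circ f$ is $\C$-linear, well-defined, injective, and surjective, with the cuspidal versions inherited at each step. For well-definedness, starting from $f \in \AA_\rho(\Gamma)$ and $f_0 := L_\rho \circ f$, smoothness, left $\Gamma$-invariance, $\mathcal Z$- and $K_\infty$-finiteness, and slow growth descend immediately from $f$ to $f_0$. To check the highest-weight conditions, I would apply \eqref{e:liealgebraactionvec} to get $(Xf_0)(g) = L_\rho(\rho(\iota(X))f(g))$ for $X \in \mathfrak k_\C$. Since $\iota$ acts as matrix transpose on $\mathfrak{gl}_n(\C) \simeq \mathfrak k_\C$ one has $\iota(B_{i,j}) = B_{j,i}$, and the $U(n)$-invariance of $\langle\cdot,\cdot\rangle$ yields $\rho(B_{j,i})^* = \rho(B_{i,j})$, so $L_\rho(\rho(B_{j,i})w) = \langle w, \rho(B_{i,j})v_\rho\rangle$. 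Combined with $\rho(B_{i,j})v_\rho = 0$ for $i<j$ and $\rho(B_{i,i})v_\rho = k_i v_\rho$, this gives $B_{i,j} f_0 = 0$ ($i<j$) and $B_{i,i} f_0 = k_i f_0$, so $f_0 \in \AA(\Gamma;\rho)$. For injectivity, $L_\rho \circ f = 0$ gives $0 = L_\rho(f(gk)) = \langle f(g), \rho(\iota(k))^{-1}v_\rho\rangle$ for all $g,k$ (by unitarity); as $k$ varies over $K_\infty$, these vectors sweep out the $U(n)$-orbit of $v_\rho$ which spans $V$ by irreducibility, so $f = 0$.

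For surjectivity, I would take non-zero $\phi \in \AA(\Gamma;\rho)$ and let $W \subset \AA(\Gamma)$ be the $K_\infty$-module spanned by right translates of $\phi$, so $W \simeq \rho$ with $\phi$ a highest-weight vector. With $\sigma(k) := \rho(\iota(k^{-1}))$ one has $\hat\sigma \simeq \rho$ (as in the proof of Lemma~\ref{l:contramodule}), and a direct weight computation shows $L_\rho \in \hat V$ is a $\hat\sigma$-highest-weight vector; hence there is a unique $K_\infty$-equivariant isomorphism $\beta: W \to \hat V$ with $\beta(\phi) = L_\rho$. Identifying $V$ with $(\hat V)^*$, define $f:\Sp_{2n}(\R) \to V$ by $L(f(g)) := \beta^{-1}(L)(g)$ for $L \in \hat V$. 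Then $L_\rho(f(g)) = \phi(g)$ by construction, and the equivariance of $\beta$ together with $(\hat\sigma(k)L)(v) = L(\rho(\iota(k))v)$ gives
\[
 L(f(gk)) = \beta^{-1}(L)(gk) = \beta^{-1}(\hat\sigma(k)L)(g) = (\hat\sigma(k)L)(f(g)) = L(\rho(\iota(k))f(g)),
\]
hence $f(gk) = \rho(\iota(k)) f(g)$. For each $L \in \hat V$, the scalar function $L \circ f$ equals $\beta^{-1}(L) \in W$; smoothness, $\Gamma$-invariance, slow growth, and $K_\infty$-finiteness of $f$ follow componentwise from the corresponding properties of $W \subset \AA(\Gamma)$, and $\mathcal Z$-finiteness follows because $\mathcal Z W$ is finite-dimensional (as $\mathcal Z$ commutes with $\mathfrak k_\C$ and $\phi$ is both $\mathcal Z$- and $K_\infty$-finite). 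Cuspidality is preserved in both directions since it is invariant under right translation and under the linear map $L_\rho$.

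The hard part will be the surjectivity: keeping track of the distinction between the honest $K_\infty$-representation $\sigma$ and the anti-homomorphic assignment $k \mapsto \rho(\iota(k))$ appearing in the slash-style transformation law, identifying $L_\rho$ as the correct highest-weight vector in $\hat V$ under $\hat\sigma$, and verifying the automorphic conditions for the reconstructed $V$-valued function $f$ from the scalar datum $\phi$.
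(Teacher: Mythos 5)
Your well-definedness and injectivity arguments match the paper's; the differential formula $(Xf_0)(g)=\langle f(g),\rho(B_{i,j})v_\rho\rangle$ is exactly the paper's equation for computing the highest-weight conditions. The genuine divergence is in surjectivity. The paper's proof constructs an explicit preimage via a $K_\infty$-convolution,
\[
 \tilde{h}(g) = \int_{K_\infty}h(g\,\iota(k^{-1}))\,\tau(R(k)h)\,dk,
\]
then uses Schur orthogonality to show $L_\rho\circ\tilde h$ is a non-zero scalar multiple of $h$, which requires a renormalization step. Your argument instead identifies $L_\rho$ as a highest-weight vector in $\hat V$ under $\hat\sigma$, invokes Schur's lemma to fix a unique intertwiner $\beta:W\to\hat V$ normalized by $\beta(\phi)=L_\rho$, and reassembles the $V$-valued function from its coordinate functionals $L\circ f=\beta^{-1}(L)\in W$. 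This avoids the normalization issue entirely — $L_\rho\circ f=\phi$ holds on the nose rather than up to a scalar — at the small cost of having to verify the automorphic-form axioms (in particular $\mathcal Z$-finiteness) for the reconstructed $f$, which you correctly reduce to finite-dimensionality of $\mathcal Z W$. Both routes are valid; yours is arguably cleaner, while the paper's is more self-contained since it relies only on Schur orthogonality rather than the dictionary between $V$-valued functions and tuples of scalar coefficient functions.
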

\begin{proof}
Let $V$ be the space of $\rho$ and let $f \in \AA_\rho(\Gamma)$. By Lemma \ref{l:contramodule}, the $K_\infty$-translates of the function $L_\rho \circ f$ generate a module $W$ isomorphic to $\rho$.  A calculation shows that
\begin{equation}\label{l:Lrhoeq1}
 (X(L_\rho\circ f))(g)=\langle f(g),\rho(X)v_\rho\rangle
\end{equation}
for $g\in\Sp_{2n}(\R)$ and $X\in\mathfrak{k}_\C$. It follows that $L_\rho\circ f$ is a highest weight vector in $W$. Therefore, $L_\rho \circ f \in \AA(\Gamma; \rho).$  Next, we show that the map $f\mapsto L_\rho \circ f$  is injective. If not, there exists some non-zero $f \in \AA_\rho(\Gamma)$ such that $L_\rho(f(g)) = 0$ for all $g$; this contradicts the fact that the vectors $f(g)$, as $G$ runs through $\Sp_{2n}(\R)$, span all of $V$.

To complete the proof we need to show that the map $f\mapsto L_\rho \circ f$ is surjective. Suppose that $h \in \AA(\Gamma; \rho).$ Let $V_h$ be the space generated by the right-translates $R(k)h$ with $k \in K_\infty$. By assumption, there exists an isomorphism $\tau: V_h \rightarrow V$ as $K_\infty$-modules. Consider the element $\tilde{h} \in \AA(\Gamma; V)$ given by
$$
 \tilde{h}(g) = \int\limits_{K_\infty}h(g \iota(k^{-1})) \tau(R(k)h)\,dk,
$$
where $dk$ is a Haar measure on $K_\infty$. A calculation shows that $\tilde{h} \in \AA_\rho(\Gamma)$. Using the Schur orthogonality relations, one can further show that $L_\rho \circ \tilde{h}(g) = \langle \tilde{h}(g), v_\rho \rangle$ is a constant multiple of $h(g)$. By renormalizing  if needed, we obtain that  $L_\rho \circ \tilde{h} = h$. This completes the proof that $f\mapsto L_\rho \circ f$ gives an isomorphism $\AA_\rho(\Gamma) \rightarrow \AA(\Gamma; \rho)$. Finally, it is clear that  $f$ is cuspidal if and only if  $L_\rho \circ f$ is.
\end{proof}

\section{Nearly holomorphic Siegel modular forms}\label{s:nearholofull}
In this section we define nearly holomorphic Siegel modular forms and reframe them in the representation-theoretic language, construct some key  differential operators linking holomorphic and nearly holomorphic forms, and prove the arithmetic properties of these operators that will be crucial for our main theorem on $L$-values.

\subsection{Definitions and basic properties}
Let $(\rho, V)$ be a finite dimensional representation of $\GL_n(\C)$.
 Write $z \in \H_n$ as $z = x + iy$. For an integer $e \geq 0$, define $N_\rho^e(\H_n)$ to be the space of all functions $f: \H_n \rightarrow V$ of the form $f = \sum_i f_i v_i$ where $v_i$ ranges over some fixed basis\footnote{This definition does not depend on the choice of basis.} of $V$  and each $f_i:\H_n \rightarrow \C$ is a  polynomial of degree $\leq e$ in the entries of $y^{-1}$ with holomorphic functions $\H_n\to\C$ as coefficients. Note that the definition does not involve the representation $\rho$ but merely the representation space $V$. The space
$$N_\rho(\H_n) = \bigcup_{e \geq 0} N_\rho^e(\H_n)$$
is called the space of $V$-valued nearly holomorphic functions. By \cite[Lemma 13.3 (3) and equation (13.10)]{shibook2}
 we see that for $f \in C^\infty(\H_n, V)$, we have
\begin{equation}\label{nh-characterization}
f \in N_\rho^e(\H_n) \text{ if and only if } E^{e+1}f = 0,
\end{equation}
where $E$ is defined as in \eqref{e:defCE}. Now, by Proposition \ref{alg-op-reln-prop}, part 1, we conclude that
\begin{equation}\label{nh-implies-p-finite}
f \in N_\rho(\H_n) \text{ if and only if } \mathcal{U}(\p_-)f^\rho \text{ is finite-dimensional},
\end{equation}
\begin{equation}\label{h-implies-p-ann}
f \in N^0_\rho(\H_n) \text{ if and only if } f^\rho \text{ is annihilated by } \p_-.
\end{equation}
We say that a $(\g, K_\infty)$-module $V'$ is locally $(\p_-)$-finite, if $\mathcal{U}(\p_-)v$ is finite-dimensional for all $v \in V'$. It follows from the above and \eqref{pkcommeq} that if $f  \in N_\rho(\H_n)$, then $\U(\g_\C)f^\rho$ is a locally $(\p_-)$-finite $(\g, K_\infty)$-module.

For a congruence subgroup $\Gamma$ of $\Sp_{2n}(\Q)$, let $N_\rho(\Gamma)$ be the space of nearly holomorphic modular forms of weight $\rho$ with respect to $\Gamma$. Precisely, $N_\rho(\Gamma)$ consists of the space of functions $f \in  N_\rho(\H_n)$ such that $f |_\rho \gamma = f$ for all $\gamma \in \Gamma$; if $n=1$, we also require that the Fourier expansion of $f|_\rho \gamma$ is supported on the non-negative rationals for all $\gamma \in \Sp_{2n}(\Z)$. %We will show below that the space $N_\rho(\Gamma)$ is finite-dimensional.
Let $N_\rho^e(\Gamma) = N_\rho(\Gamma) \cap N_\rho^e(\H_n)$. Any $f \in N_\rho^e(\Gamma)$ has a Fourier expansion of the form
\begin{equation}\label{e:fouriernearholo}
	f(z) = \sum\limits_{h \in M_n^{\rm sym}(\Q)}  c_h((\pi y)^{-1}) \exp(2 \pi  i\, {\rm tr}(hz))
\end{equation}
where  $c_h\in \bigcup_{0\le p \le e}S_p(T,V)$.
We use $N_\rho(\Gamma)^\circ$ to denote the space of cusp forms in $N_\rho(\Gamma)$, which consists of the forms $f$ for which the Fourier expansion of $f|_\rho \gamma$ is supported on positive definite matrices for all $\gamma \in \Sp_{2n}(\Z)$.

For $\k = (k_1, k_2, \ldots, k_n)$, we denote $$N_\k(\Gamma)= N_{\rho_\k}(\Gamma), \quad N^e_\k(\Gamma)= N^e_{\rho_\k}(\Gamma), \quad N_\k(\Gamma)^\circ= N_{\rho_\k}(\Gamma)^\circ, \quad N^e_\k(\Gamma)^\circ= N^e_{\rho_\k}(\Gamma)^\circ.$$
Furthermore, for a non-negative integer $k$, we denote $N_k(\Gamma)= N_{\det^k}(\Gamma)$, $N_k(\Gamma)^\circ= N_{\det^k}(\Gamma)^\circ.$ Note that $N_k(\Gamma) = N_{k,k, \ldots, k}(\Gamma)$.
Given forms $f_1$, $f_2$ in $N_k(\Gamma)^\circ$, we define the Petersson inner product $\langle f_1, f_2 \rangle$ by
\begin{equation}\label{e:defpetersson}
 \langle f_1, f_2 \rangle = \vl(\Gamma \bs \H_n)^{-1}\int\limits_{\Gamma \bs \H_n} \det(y)^k f_1(Z) \overline{f_2(z)}\,dz.
\end{equation}
Above, $dz$ is any $\Sp_{2n}(\R)$-invariant measure on $\H_n$ (the definition of the inner product does not depend on the choice of $dz$).

Finally we denote the set of holomorphic cusp forms as follows:
$$S_{\rho}(\Gamma) = N^0_\rho(\Gamma)^\circ, \quad S_\k(\Gamma) = N^0_\k(\Gamma)^\circ.$$
\subsection{Nearly holomorphic modular forms and \texorpdfstring{$(\p_-)$}{}-finite automorphic forms}\label{s:ident}
We let $\AA(\Gamma; V)_{\p_-\text{-fin}}$ denote the subspace of $\AA(\Gamma; V)$ consisting of all $f \in \AA(\Gamma; V)$ such that $\mathcal{U}(\p_-)f$  is finite-dimensional.
If $V=\C$ we denote this space by $\AA(\Gamma)_{\p_-\text{-fin}}$. Let $\AA(\Gamma; V)_{\p_-\text{-fin}}^\circ$ and $\AA(\Gamma)_{\p_-\text{-fin}}^\circ$ denote the corresponding spaces of cusp forms. It is easy to see that $\AA(\Gamma; V)_{\p_-\text{-fin}}$,  $\AA(\Gamma; V)_{\p_-\text{-fin}}^\circ$, $\AA(\Gamma)_{\p_-\text{-fin}}$ and $\AA(\Gamma)_{\p_-\text{-fin}}^\circ$ are all locally $(\p_-)$-finite $(\g, K_\infty)$-modules.

Define
\[
 \AA_\rho(\Gamma)_{\p_-\text{-fin}}= \AA(\Gamma; V)_{\p_-\text{-fin}} \cap \AA_\rho(\Gamma) ,\quad \AA_\rho(\Gamma)_{\pp}^\circ = \AA(\Gamma; V)_{\p_-\text{-fin}} \cap \AA_\rho(\Gamma)^\circ.
\]
\[\AA(\Gamma; \rho)_{\p_-\text{-fin}}= \AA(\Gamma)_{\p_-\text{-fin}} \cap \AA(\Gamma; \rho) ,\quad \AA(\Gamma; \rho)_{\pp}^\circ = \AA(\Gamma)_{\p_-\text{-fin}} \cap \AA(\Gamma; \rho)^\circ.
\]
Recall that $\AA_\rho(\Gamma)$ was defined in \eqref{Arhogammadefeq} and $\AA(\Gamma; \rho)$ was defined before Lemma \ref{l:Lrho}.
The following crucial proposition, which generalizes Proposition 4.5 of \cite{PSS14}, gives the relation between nearly holomorphic modular forms and $(\p_-)$-finite automorphic forms. For a function $f$ on $\H_n$, recall the associated function $f^\rho$ on $\Sp_{2n}(\R)$ defined in \eqref{frho-denf}.
\begin{proposition}\label{p:iso}
 \begin{enumerate}
  \item The map  $f \mapsto f^\rho$ gives isomorphisms of vector spaces  $N_{\rho}(\Gamma) \isomto \AA_\rho(\Gamma)_{\pp}$ and $N_{\rho}(\Gamma)^\circ \isomto \AA_\rho(\Gamma)_{\pp}^\circ$.
  \item Assume that $\rho$ is irreducible. The map $f \mapsto L_\rho \circ f^\rho$ gives isomorphisms of vector spaces  $N_{\rho}(\Gamma) \isomto  \AA(\Gamma; \rho)_{\pp}$ and $N_{\rho}(\Gamma)^\circ  \isomto \AA(\Gamma; \rho)_{\pp}^\circ$.
 \end{enumerate}
\end{proposition}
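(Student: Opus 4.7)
The plan is to verify that $f \mapsto f^\rho$ and its explicit inverse preserve all the relevant structures, giving part 1. For part 2, I would compose with Lemma \ref{l:Lrho} and check compatibility with the $\p_-$-finite condition. The main obstacle will be establishing $\mathcal{Z}$-finiteness in part 1.

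For the forward direction of part 1, take $f \in N_\rho^e(\Gamma)$. The smoothness of $f^\rho$, left $\Gamma$-invariance (from $f|_\rho\gamma = f$ and the cocycle property of $J$), right $K_\infty$-equivariance by $\rho \circ \iota$, slow growth, and $K_\infty$-finiteness are all immediate or standard. Finite-dimensionality of $\mathcal{U}(\p_-) f^\rho$ is precisely \eqref{nh-implies-p-finite}. The subtlest property is $\mathcal{Z}$-finiteness: since $D_{2r}$ is central and $\iota_-(u)^{e+1} f^\rho = 0$ for all $u \in T$, we get $\iota_-(u)^{e+1}(D_{2r} f^\rho) = 0$; combined with the fact that $D_{2r} f^\rho$ shares the $K_\infty$-transformation law of $f^\rho$, this shows $D_{2r} f^\rho$ arises from an element of $N_\rho^e(\Gamma)$ via $f\mapsto f^\rho$, placing $\mathcal{Z} f^\rho$ inside a space in bijection with $N_\rho^e(\Gamma)$, which is finite-dimensional (by Proposition \ref{l:fact2} and its non-cuspidal analogue). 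The inverse sends $\phi \in \AA_\rho(\Gamma)_{\pp}$ to the function $f: \H_n \to V$ defined by $f(Z) := \rho(J(g,iI_n))\phi(g)$ for any $g \in \Sp_{2n}(\R)$ with $g(iI_n) = Z$; the $K_\infty$-equivariance of $\phi$ makes this well-defined, and one checks directly that $f^\rho = \phi$, $f|_\rho \gamma = f$, and $f \in N_\rho(\H_n)$ via \eqref{nh-implies-p-finite}. Cuspidality is preserved in both directions.

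For part 2, Lemma \ref{l:Lrho} gives the isomorphism $\AA_\rho(\Gamma) \to \AA(\Gamma; \rho)$, $f \mapsto L_\rho \circ f$; it suffices to show this restricts to an isomorphism between the $\p_-$-finite subspaces. The forward inclusion holds because $\mathcal{U}(\p_-)(L_\rho \circ f) = L_\rho \circ \mathcal{U}(\p_-) f$. For the converse, suppose $h = L_\rho \circ f \in \AA(\Gamma;\rho)_{\pp}$; then $\mathcal{U}(\mathfrak{k}_\C) h$ is finite-dimensional of dimension $\dim \rho$, with a basis of the form $\{X_1 h, \ldots, X_{\dim\rho} h\}$ for some $X_i \in \mathcal{U}(\mathfrak{k}_\C)$. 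Commuting $\mathcal{U}(\p_-)$ past each $X_i$ using $[\mathfrak{k}_\C, \p_-] \subseteq \p_-$, and using that each graded piece of $\mathcal{U}(\mathfrak{k}_\C)$ is finite-dimensional, one shows $\mathcal{U}(\p_-)(X_i h)$ is finite-dim, hence so is $\mathcal{U}(\mathfrak{k}_\C)\mathcal{U}(\p_-) h$. Since the coordinates of $f$ in a basis of $V$ arise as right $K_\infty$-translates of $h$ (via the reconstruction in the proof of Lemma \ref{l:Lrho}), $\mathcal{U}(\p_-) f$ embeds into this finite-dim space, completing the argument.
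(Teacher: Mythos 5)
Your overall architecture (forward verification, explicit inverse, reduction of part 2 to Lemma \ref{l:Lrho}) matches the paper, but your treatment of $\mathcal{Z}$-finiteness is a genuinely different route, and it contains a problematic citation. The paper establishes $\mathcal{Z}$-finiteness by observing that $\U(\g_\C)f^\rho$ is a finitely generated, locally $\p_-$-finite weight module, hence lies in category $\O$, and then invokes \cite[Thm.\ 1.1(e)]{Hum08}. You instead argue directly: since $D_{2r}$ is central (hence $\Ad$-invariant), it preserves both the left-$\Gamma$-invariance, the $K_\infty$-transformation law \eqref{e:ktranseq}, and annihilation by $(e+1)$-fold products from $\p_-$, so $D_{2r}f^\rho$ still descends to an element of $N_\rho^e(\Gamma)$; then $\mathcal{Z}f^\rho$ embeds (linearly, via the inverse of $f\mapsto f^\rho$) into the bounded-order space $N_\rho^e(\Gamma)$, whose finite-dimensionality you invoke. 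That idea is sound and bypasses the category-$\O$ machinery, but the citation is wrong on two counts. First, Proposition \ref{l:fact2} concerns the \emph{union} $\bigcup_e N_\rho^e(\Gamma)^\circ$ and only for cusp forms, whereas you need finite-dimensionality of the fixed-$e$ space $N_\rho^e(\Gamma)$ (not necessarily cuspidal). Second, and more seriously, invoking \ref{l:fact2} here is \emph{circular}: its proof relies on Proposition \ref{AA0nfindecomp2prop}, which in turn relies on the very Proposition \ref{p:iso} you are trying to prove. The correct reference is Shimura's \cite[Lemma 14.3]{shibook2}, as flagged in the remark following Proposition \ref{l:fact2}; with that substitution your argument goes through for $n\ge 2$ (for $n=1$ one must additionally check the cusp condition in the definition of $N_\rho(\Gamma)$, which you do not address).

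For part 2, you supply an argument for the converse direction (that $L_\rho\circ\phi$ being $\p_-$-finite forces $\phi$ to be $\p_-$-finite), based on expressing the coordinates of $\phi$ inside $\U(\mathfrak{k}_\C)h$ (using the reconstruction integral from Lemma \ref{l:Lrho}) and then commuting $\U(\p_-)$ past $\U(\mathfrak{k}_\C)$; this is a detail the paper glosses over with ``$L_\rho$ preserves the $(\p_-)$-finiteness property,'' and your fleshed-out version is welcome, though you should make explicit that the $K_\infty$-integral lands in the (closed, finite-dimensional) span of $K_\infty$-translates of $h$.
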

\begin{proof}
In view of Lemma \ref{l:Lrho} and the fact that $L_\rho$ preserves the $(\p_-)$-finiteness property, it suffices to prove the first part of the proposition.

Let us first show that for each $f$ in $N_{\rho}(\Gamma)$, the function  $f^\rho$ lies in $\AA_\rho(\Gamma)_{\pp}$. Clearly, $f^\rho$ is left $\Gamma$-invariant and $K_\infty$-finite. Let $\pi = \U(\g_\C) f^\rho$. Since $f$ is a nearly holomorphic modular form, $\pi$ is a $(\g, K_\infty)$-module, and is locally $\p_-$-finite; see (\ref{nh-implies-p-finite}). The action of the Cartan subalgebra $\h$ is semisimple, hence $\pi$ is a weight module. This implies that $\pi$ lies in Category $\mathcal{O}$ in the sense of \cite{Hum08}. %(In fact, as we will see soon, it lies in the subcategory $\mathcal{O}^\p$).
By \cite[Thm 1.1 (e)]{Hum08} we have that $f^\rho$ is  $\mathcal Z$-finite. Finally, $f^\rho$ has the  moderate growth condition from Theorem 1.1 of \cite{PiSS17}. This completes the proof that $f^\rho$ lies in $\AA_\rho(\Gamma)_{\pp}$.

To show that the map  $f \mapsto f^\rho$ is an isomorphism, we construct an inverse map.  Let $f' \in \AA_\rho(\Gamma)_{\pp}$, $z \in \H_n$ and $g \in \Sp_{2n}(\R)$ such that $g( iI_n) = z$. Then define $f(z) := \rho(J(g, iI_n))f'(g)$. Then $f \in N_{\rho}(\Gamma)$ by the left $\Gamma$-invariance of $f'$ and (\ref{nh-implies-p-finite}) and it can be easily checked that the map $f' \mapsto f$ defined above is the inverse of the map $f \mapsto f^\rho$.

Finally, the argument of Proposition 4.5 of \cite{PSS14} shows that $f^\rho$ is cuspidal if and only if $f$ is.
\end{proof}

For the rest of this subsection, we assume that $\rho$ is irreducible. The above result implies that for any $f \in N_{\rho}(\Gamma)^\circ$ the function $L_\rho \circ f^\rho$ is a cuspidal automorphic form on $\Sp_{2n}(\R)$ and the $(\g, K_\infty)$-module generated by $L_\rho \circ f^\rho$  decomposes into a finite direct sum of irreducible, admissible, unitary $(\g, K_\infty)$-modules. We now make this observation more precise.

For each $\k \in \Lambda^+$, let $F_\k$ be any model for $\rho_\k$, and consider $F_\k$ as a module for $\mathfrak{k}_\C + \p_{-}$ by letting $\p_{-}$ act trivially. Let $N(\k) :=  \U(\g_\C) \otimes_{\U(\mathfrak{k}_\C + \p_{-})} F_\k$. Then $N(\k)$ is a locally $\p_{-}$-finite $(\g, K_\infty)$ module, and by the general theory of category $\O^\p$ (see Section 9.4 of \cite{Hum08}), it admits a unique irreducible quotient, which we denote by $L(\k)$. The $(\g, K_\infty)$ module $L(\k)$ is locally $\p_-\text{-finite}$ and contains the $K_\infty$-type $\rho_\k$ with multiplicity one.

From the theory developed in Chapter 9 of \cite{Hum08}, \emph{any}  irreducible, locally $\p_-\text{-finite}$ $(\g, K_\infty)$-module is isomorphic to  $L(\k)$ for some $\k \in \Lambda^+$. More precisely, given an irreducible, locally $\p_-\text{-finite}$ $(\g, K_\infty)$-module  $V$, there exists a $\k \in \Lambda^+$ and a $v \in V$ such that $L(\k) \cong \U(\g_\C) v$ and the following properties hold: a) $v$ is a highest weight vector of weight $\k$ generating the $K_\infty$-type $\rho_\k$, b) $v$ is annihilated by  $\p_-$. These two properties identify $v \in V$ uniquely up to multiples, and ensure that $V\simeq L(\k)$.

For any $\k \in \Lambda^+$, we define $\AA(\Gamma; L(\k))^\circ$ to be the subspace of $\AA(\Gamma)^\circ$ spanned by the forms $f$ such that $\U(\g_\C) f$ is isomorphic to a sum of copies of $L(\k)$ as a $(\g, K_\infty)$-module. Since $L(\k)$ is locally $\p_-\text{-finite}$, it follows that
$$
 \AA(\Gamma; L(\k))^\circ\subseteq \AA(\Gamma)^\circ_\pp.
$$
Hence $\AA(\Gamma; L(\k))^\circ$ is the $L(\k)$-isotypical component of $\AA(\Gamma)^\circ_\pp$.

 \begin{proposition}\label{AA0nfindecomp2prop}\label{l:multipli}
 As $(\mathfrak{g},K_\infty)$-modules, we have
 $$
  \AA(\Gamma)^\circ_\pp=\bigoplus_{\k \in \Lambda^+}\,\AA(\Gamma; L(\k))^\circ \text{ where } \AA(\Gamma; L(\k))^\circ \cong \dim(S_\k(\Gamma)) \,L(\k).
 $$
 The highest weight vectors of weight $\k$ in $\AA(\Gamma; L(\k))^\circ$ correspond to elements of $S_{\k}(\Gamma)$ via the map from Proposition \ref{p:iso}. The  direct sum decomposition above is orthogonal with respect to the Petersson inner product.
\end{proposition}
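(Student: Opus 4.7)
The plan is to combine three ingredients: (i) the standard spectral decomposition of $\AA(\Gamma)^\circ$ into irreducible $(\mathfrak{g},K_\infty)$-submodules, (ii) the classification of irreducible locally $\p_-$-finite $(\mathfrak{g},K_\infty)$-modules recalled just before the proposition, and (iii) the dictionary of Proposition \ref{p:iso} between holomorphic cusp forms and $\p_-$-annihilated highest weight vectors. First, $\AA(\Gamma)^\circ$ decomposes as a countable algebraic direct sum of irreducible unitary $(\mathfrak{g},K_\infty)$-modules, with distinct summands orthogonal under the natural $\Sp_{2n}(\R)$-invariant inner product. Since right translation by $\g_\C$ and $K_\infty$ preserves $\p_-$-finiteness, the subspace $\AA(\Gamma)^\circ_\pp$ is a $(\mathfrak{g},K_\infty)$-submodule, and therefore equals the sum of precisely those irreducible summands which themselves happen to be locally $\p_-$-finite. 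Each such summand is isomorphic to $L(\k)$ for a unique $\k \in \Lambda^+$ by the classification, so collecting summands by isomorphism class gives $\AA(\Gamma)^\circ_\pp = \bigoplus_{\k \in \Lambda^+} \AA(\Gamma;L(\k))^\circ$, with the decomposition orthogonal by Schur/unitarity.

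To compute the multiplicity $m_\k$ of $L(\k)$ in $\AA(\Gamma;L(\k))^\circ$, I would use that $L(\k)$ contains exactly a one-dimensional space of extremal vectors, namely $\mathfrak{k}_\C$-highest weight vectors of weight $\k$ annihilated by $\p_-$. Hence $m_\k$ equals the dimension of the space $X_\k$ of such vectors inside $\AA(\Gamma)^\circ_\pp$; note that $\p_-$-annihilation automatically implies $\p_-$-finiteness, so $X_\k$ is simply the subspace of $\AA(\Gamma;\rho_\k)^\circ_\pp$ annihilated by $\p_-$. To identify $X_\k$ with $S_\k(\Gamma)$, invoke Proposition \ref{p:iso}(2): any $f \in S_\k(\Gamma) = N^0_{\rho_\k}(\Gamma)^\circ$ gives $f^{\rho_\k}$ annihilated by $\p_-$ via \eqref{h-implies-p-ann}, and since $\g_\C$ acts by differentiation commuting with the fixed linear projection $L_{\rho_\k}$, the form $f' := L_{\rho_\k} \circ f^{\rho_\k}$ satisfies $\p_- f' = L_{\rho_\k} \circ (\p_- f^{\rho_\k}) = 0$. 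This defines an injection $S_\k(\Gamma) \hookrightarrow X_\k$.

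For surjectivity, suppose $f' \in X_\k$. Then $f'$ is a $\mathfrak{k}_\C$-highest weight vector of weight $\k$ annihilated by $\p_-$, so $\U(\g_\C)f'$ is a quotient of $N(\k)$; because $f'$ sits inside the unitary cuspidal representation, $\U(\g_\C)f'$ is semisimple and locally $\p_-$-finite, forcing it to be the irreducible module $L(\k)$. By Proposition \ref{p:iso}(2), $f' = L_{\rho_\k} \circ f^{\rho_\k}$ for a unique $f \in N_{\rho_\k}(\Gamma)^\circ$, and the averaging construction used in the proof of Lemma \ref{l:Lrho} shows that $f^{\rho_\k}$ can be reconstructed from the $L(\k)$-submodule generated by $f'$; since that submodule is annihilated by $\p_-$ on its lowest $K_\infty$-type, the reconstruction yields $\p_- f^{\rho_\k} = 0$, and \eqref{h-implies-p-ann} then gives $f \in S_\k(\Gamma)$. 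Finally, the highest weight vectors of weight $\k$ in $\AA(\Gamma;L(\k))^\circ$ are by definition $X_\k$, so the stated correspondence with $S_\k(\Gamma)$ is exactly what we have produced.

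The main obstacle is the surjectivity in the last step: promoting the scalar identity $\p_-(L_{\rho_\k}\circ f^{\rho_\k}) = 0$ to the $V$-valued identity $\p_- f^{\rho_\k} = 0$ is not immediate, because $V$ has weights other than the highest one, and the projection $L_{\rho_\k}$ discards them. The cleanest way around this is to exploit that the module generated by $f'$ must be irreducible, isomorphic to $L(\k)$, and then use the uniqueness (up to scalar) of its lowest $K_\infty$-type together with Proposition \ref{p:iso}(2)'s bijectivity. Once the identification $X_\k \cong S_\k(\Gamma)$ is established, $m_\k = \dim S_\k(\Gamma)$ and the proposition follows.
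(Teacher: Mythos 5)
Your argument follows the paper's route essentially step for step: decompose $\AA(\Gamma)^\circ_\pp$ into irreducible locally $\p_-$-finite $(\g,K_\infty)$-modules (each isomorphic to some $L(\k)$, $\k\in\Lambda^+$) by unitarity of the Petersson inner product, then compute the multiplicity of $L(\k)$ by matching its $\p_-$-annihilated highest weight vectors of weight $\k$ with $S_\k(\Gamma)$ via Proposition \ref{p:iso} and \eqref{h-implies-p-ann}. The subtlety you flag in the surjectivity direction --- that $\p_-(L_{\rho_\k}\circ f^{\rho_\k})=0$ does not formally force $\p_- f^{\rho_\k}=0$ at the $V$-valued level --- is real and is in fact left implicit in the paper's proof; your fix via the averaging map $h\mapsto\tilde h$ from the proof of Lemma~\ref{l:Lrho} is the right one, since $\mathrm{Ad}(K_\infty)$ normalizes $\p_-$, so averaging over $K_\infty$ preserves annihilation by $\p_-$ and carries a $\p_-$-annihilated scalar $f'$ back to a $\p_-$-annihilated vector-valued $f^{\rho_\k}$.
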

\begin{proof}
The existence of the Petersson inner product implies that the $(\g, K_\infty)$-module $\AA(\Gamma)_{\p_-\text{-fin}}^\circ$ decomposes as a direct sum of irreducible, locally $\p_-\text{-finite}$ $(\g, K_\infty)$-modules. As noted earlier, any such module is isomorphic to $L(\k)$ for some $\k \in \Lambda^+$.

Next we show that the $(\g, K_\infty)$-module $L(\k)$ occurs in  $\AA(\Gamma)^\circ_{\pp}$ with multiplicity equal to $\dim(S_\k(\Gamma))$. For this, let $W'$ be the subspace of $\AA(\Gamma; L(\k))^\circ$  spanned by all the highest weight vectors of weight $\k$. In each copy of $L(\k)$, the highest weight vector of weight $\k$ is unique up to multiples, and $W'$ is spanned by these highest weight vectors. Therefore the dimension of $W'$ equals the multiplicity of  $L(\k)$  in  $\AA(\Gamma)^\circ_{\pp}$. It suffices to then show that the map $S_\k(\Gamma) \rightarrow W'$ given by $f \mapsto L_{\rho_\k}\circ f^{\rho_\k}$ is an isomorphism. Note here that $W'$ is the subspace of $\AA(\Gamma; \rho_\k)^\circ_{\pp}$ that is annihilated by $\p_{-}$. Now the required result follows from \eqref{h-implies-p-ann} and Proposition \ref{p:iso}.
\end{proof}

As is well-known, for $\k = (k_1, k_2, \ldots, k_n) \in \Lambda^+$, we have $S_\k(\Gamma) = \{0\}$ if  $k_n\le0$ (see \cite{Freitag79}). So a consequence of the above results is that $L(\k)$ cannot occur in  $\AA(\Gamma)^\circ_{\pp}$ when $k_n \le 0$.

\begin{lemma}\label{L:llambdaktypes}
 Let $\k', \k \in \Lambda^+$ with $\k'=(k_1', k_2', \ldots, k_n')$ and $\k=(k_1, k_2, \ldots, k_n)$. Suppose that $L(\k')$ contains the $K_\infty$-type $\rho_\k$. Then $k_i \ge k_i'$ for $1\le i \le n$.
\end{lemma}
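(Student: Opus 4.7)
The plan is to pass from $L(\k')$ to the ambient parabolic Verma module $N(\k')$ and decompose the latter as a $K_\infty$-module. Since $L(\k')$ is a quotient of $N(\k')$, every $K_\infty$-type of $L(\k')$ also appears in $N(\k')$, so it suffices to establish the bound for the $K_\infty$-types of $N(\k')$.

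Using the Cartan decomposition $\g_\C = \p_+ \oplus \mathfrak{k}_\C \oplus \p_-$ together with the fact from \eqref{pkcommeq} that $\p_+$ is abelian, the PBW theorem yields a $K_\infty$-equivariant isomorphism
$$
 N(\k') = \U(\g_\C) \otimes_{\U(\mathfrak{k}_\C + \p_-)} F_{\k'} \cong \U(\p_+) \otimes F_{\k'} \cong S(\p_+) \otimes F_{\k'},
$$
where the $K_\infty$-action on the right is the tensor product of the adjoint action on $S(\p_+)$ and $\rho_{\k'}$ on $F_{\k'}$. Because $\Sp_{2n}(\R)/U(n)$ is a Hermitian symmetric space of tube type with $\p_+\cong\mathrm{Sym}^2(\C^n)$ as a $U(n)$-module, the classical Hua--Kostant decomposition gives
$$
 S(\p_+) \cong \bigoplus_{a_1 \ge a_2 \ge \cdots \ge a_n \ge 0} \rho_{(2a_1, 2a_2, \ldots, 2a_n)}
$$
as $K_\infty$-modules. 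Consequently, $\rho_\k$ must occur in $\rho_{(2a_1, \ldots, 2a_n)} \otimes \rho_{\k'}$ for some partition $(a_1, \ldots, a_n)$ with $a_i\ge 0$.

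Finally, I would apply the Littlewood--Richardson rule after a determinant twist to reduce to polynomial representations. Setting $c := k'_n$, the shifted weight $\k' - (c, \ldots, c)$ is a partition and $\rho_{\k'} \cong \rho_{\k' - (c, \ldots, c)} \otimes \det^c$. Applying the same shift to $\k$, we reduce to showing that if $\rho_{\k - (c,\ldots,c)}$ appears in the tensor product $\rho_{(2a_1, \ldots, 2a_n)} \otimes \rho_{\k' - (c, \ldots, c)}$ of two polynomial representations, then $k_i - c \ge k'_i - c$ for all $i$. The Littlewood--Richardson rule asserts exactly this: in a tensor product $\rho_\lambda \otimes \rho_\mu$ of irreducible polynomial representations, a component $\rho_\nu$ can appear only if the skew shape $\nu/\lambda$ is a valid Young diagram, i.e., $\nu_i \ge \lambda_i$ for all $i$, and by the symmetry $c^\nu_{\lambda\mu}=c^\nu_{\mu\lambda}$ of the LR coefficients one also has $\nu_i\ge\mu_i$. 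This yields $k_i \ge k'_i$ as required. The only delicate point in the argument is the PBW identification of the $K_\infty$-module structure of $N(\k')$; the rest is a direct assembly of Hua--Kostant and Littlewood--Richardson.
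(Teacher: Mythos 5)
Your proof is correct, but it takes a genuinely different and heavier route than the paper's. Both arguments begin the same way, by passing from $L(\k')$ to the parabolic Verma module $N(\k')$ and identifying $N(\k')\cong S(\p_+)\otimes F_{\k'}$ as a $\mathfrak{k}_\C$-module via PBW. From there you invoke the Hua--Kostant decomposition $S(\p_+)\cong\bigoplus_{a_1\ge\cdots\ge a_n\ge0}\rho_{(2a_1,\ldots,2a_n)}$ and then the containment/symmetry properties of Littlewood--Richardson coefficients to conclude $k_i\ge k_i'$. The paper instead argues directly at the level of weights: writing $N(\k')=\U(\mathfrak{k}_-)\U(\p_+)v_0$ (using $[\mathfrak{k}_\C,\p_+]\subset\p_+$), every highest weight vector of a $K_\infty$-type lies in $\U(\p_+)v_0$, and since the root vectors of $\p_+$ have weights $e_i+e_j$ --- each of which is componentwise nonnegative --- any such highest weight is of the form $\k'+\sum(e_{i_j}+e_{i_j'})$, giving the bound immediately. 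The paper's version is more elementary (no Hua--Kostant, no LR rule) and extracts exactly the monotonicity needed, while your version is slightly more informative in that it locates the $K_\infty$-types of $N(\k')$ inside explicit tensor products. One small point worth making explicit in your writeup: when you apply the LR containment to the shifted weights you should note that, since the tensor product of two polynomial representations decomposes into polynomial representations, the occurrence of $\rho_\k\otimes\det^{-c}$ automatically forces $\k-(c,\ldots,c)$ to be a partition (i.e.\ $k_n\ge k_n'$); this is not an independent hypothesis but a consequence, and spelling it out closes a potential circularity worry.
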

\begin{proof} It suffices to prove the analogous statement for the representation $N(\k')$. Let $B_{i,j}$ be the elements defined in \eqref{BijEijdefeq}, and set $\mathfrak{k}_+=\langle B_{i,j}:1\le i<j\le n\rangle$  (the space spanned by the root vectors for the positive compact roots), $\mathfrak{k}_-=\langle B_{i,j}:1\le j<i\le n\rangle$. Then $\mathfrak{k}_\C=\mathfrak{h}_\C+\mathfrak{k}_++\mathfrak{k}_-$. Let $v_0$ be a highest weight vector in $F_{\k'}$, which we recall is a model for $\rho_{\k'}$. Then, as vector spaces,
$$
 N(\k') =  \U(\g_\C) \otimes_{\U(\mathfrak{k}_\C + \p_{-})} F_{\k'}
 =  \U(\p_+)F_{\k'}
 =  \U(\p_+)\U(\mathfrak{k}_-)v_0
 =  \U(\mathfrak{k}_-)\U(\p_+)v_0.
$$
The last equality follows from $[\mathfrak{k}_\C,\p_+]\subset\p_+$. It is clear that all the highest weight vectors of the $K_\infty$-types occurring in $N(\k')$ must be contained in $\U(\p_+)v_0$. Hence the weight of such a highest weight vector is of the form $(k_1,k_2,\ldots,k_n)$ with $k_i \ge k_i'$ for $1\le i \le n$.
\end{proof}

We can now prove the following fact.
\begin{proposition}\label{l:fact2}Let $\rho$ be a rational, irreducible, finite dimensional representation of $\GL_n(\C)$, and let $\Gamma$ be a congruence subgroup of $\Sp_{2n}(\Q)$. Then the space $N_{\rho}(\Gamma)^\circ \simeq \AA(\Gamma; \rho)_{\pp}^\circ$ is finite dimensional.
\end{proposition}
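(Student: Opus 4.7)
The plan is to identify $\AA(\Gamma;\rho)^\circ_\pp$ inside the decomposition of $\AA(\Gamma)^\circ_\pp$ given by Proposition \ref{AA0nfindecomp2prop} and then show that only finitely many summands contribute, each with finite-dimensional contribution. Using Proposition \ref{p:iso}, it suffices to bound $\dim \AA(\Gamma;\rho)^\circ_\pp$. Write $\rho = \rho_\k$ for $\k = (k_1,\ldots,k_n) \in \Lambda^+$. A function in $\AA(\Gamma;\rho)^\circ_\pp$ is a cusp form which is a highest weight vector of the $K_\infty$-type $\rho_\k$ and is also $\p_-$-finite. So
$$
\dim \AA(\Gamma;\rho)^\circ_\pp \;=\; \sum_{\k' \in \Lambda^+} \dim S_{\k'}(\Gamma)\,\cdot\,[L(\k'):\rho_\k],
$$
where $[L(\k'):\rho_\k]$ denotes the multiplicity of $\rho_\k$ as a $K_\infty$-type of $L(\k')$, using Proposition \ref{AA0nfindecomp2prop} and the fact that the highest weight vectors of weight $\k$ in $L(\k')$ span a space of dimension $[L(\k'):\rho_\k]$.

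Next I would cut down the sum. By the classical fact recalled before Lemma \ref{L:llambdaktypes}, $S_{\k'}(\Gamma)=\{0\}$ unless $k'_n \geq 1$. By Lemma \ref{L:llambdaktypes}, $[L(\k'):\rho_\k]=0$ unless $k'_i \leq k_i$ for $1 \leq i \leq n$. The set of $\k' = (k'_1,\ldots,k'_n) \in \Lambda^+$ satisfying $1 \leq k'_n$ and $k'_i \leq k_i$ for every $i$ is finite, so only finitely many summands are nonzero.

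It remains to check that each nonzero summand is finite. The factor $\dim S_{\k'}(\Gamma)$ is finite by the classical finite-dimensionality of spaces of holomorphic Siegel cusp forms of fixed weight and level. For the factor $[L(\k'):\rho_\k]$, note that $L(\k')$ is a quotient of $N(\k')$, and as established in the proof of Lemma \ref{L:llambdaktypes}, $N(\k') = \U(\p_+)F_{\k'}$ as a vector space. Since $\p_+$ is finite-dimensional abelian and $F_{\k'}$ is finite-dimensional, the weight-space decomposition of $N(\k')$ under $\h$ has finite-dimensional weight spaces (a given weight can arise from only finitely many monomials in root vectors of $\p_+$, whose roots are the positive non-compact roots). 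Hence every $K_\infty$-type occurs with finite multiplicity in $N(\k')$, and therefore in its quotient $L(\k')$.

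Putting these bounds together gives that $\dim \AA(\Gamma;\rho)^\circ_\pp$ is a finite sum of finite quantities, hence finite. The mildly delicate step is the finiteness of $K_\infty$-multiplicities in $L(\k')$; once one exploits that $\p_+$ is finite-dimensional abelian, this reduces to a direct weight-counting argument inside $\U(\p_+)\otimes F_{\k'}$, and no further input from the theory of category $\mathcal{O}^\p$ is needed.
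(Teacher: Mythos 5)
Your proof is correct, and it follows the same essential structure as the paper's: identify $\rho = \rho_\k$, use the decomposition of Proposition \ref{AA0nfindecomp2prop}, cut the sum over $\k'$ down to a finite set via the inequalities $1 \le k'_i \le k_i$ coming from Lemma \ref{L:llambdaktypes} and the vanishing of $S_{\k'}(\Gamma)$ for $k'_n \le 0$, and conclude. The one genuine difference is in how you handle the finiteness of the $K_\infty$-multiplicity $[L(\k'):\rho_\k]$. The paper disposes of this in one sentence by invoking the general fact that any irreducible $(\g,K_\infty)$-module is admissible (Harish-Chandra's theorem), whereas you give a self-contained and more elementary argument: since $N(\k') \cong \U(\p_+) \otimes F_{\k'}$ with $\p_+$ finite-dimensional abelian, weight spaces in $N(\k')$ are finite-dimensional (each of the positive non-compact roots $e_i+e_j$ has total degree two, so a fixed target weight is reached by only finitely many monomials), hence $K_\infty$-multiplicities in $N(\k')$, and a fortiori in its quotient $L(\k')$, are finite. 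Your route buys independence from deep representation-theoretic input, at the cost of a short combinatorial aside; the paper's route is shorter but leans on a black box. Both are valid, and the structure of the rest of the argument (including the reduction to a finite sum over $\k' \in \Lambda^{++}$ with $k'_i \le k_i$, and the classical finite-dimensionality of each $S_{\k'}(\Gamma)$) matches the paper's exactly.
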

\begin{remark}A well-known result of Shimura \cite[Lemma 14.3]{shibook2} implies that $N^e_\rho(\Gamma)^\circ$ is finite-dimensional for each non-negative integer $e$. However, Proposition \ref{l:fact2} goes much further and asserts that the space $\bigcup_{e=1}^\infty N^e_\rho(\Gamma)^\circ$ is itself finite-dimensional, i.e., $N^e_\rho(\Gamma)^\circ = N^{e+1}_\rho(\Gamma)^\circ$ for all sufficiently large $e$.
\end{remark}
\begin{proof}
Let $\k = (k_1, k_2, \ldots, k_n)$ be such that $\rho= \rho_\k$.   Recall that each element of $\AA(\Gamma; \rho)_{\pp}^\circ$ generates a $K_\infty$-module isomorphic to $\rho$. Since a given irreducible $(\g, K_\infty)$-module contains the $K_\infty$-type $\rho$ with finite multiplicity, it suffices to show that the $(\g, K_\infty)$-module $U(\g_\C)\AA(\Gamma; \rho)_{\pp}^\circ$ decomposes into a direct sum of \emph{finitely many} irreducible $(\g, K_\infty)$-modules.

Using Proposition \ref{AA0nfindecomp2prop} and the remarks following it, let
\begin{equation}\label{e:decomposition}
 U(\g_\C)\AA(\Gamma; \rho)_{\pp}^\circ  = \oplus_{\lambda \in \Lambda^+} \, d_\lambda L(\lambda),
\end{equation}
where $d_\lambda \le \dim(S_\lambda(\Gamma))$ and $d_\lambda =0$ if $\lambda = (\lambda_1, \lambda_2, \ldots, \lambda_n)$ satisfies $\lambda_n \le 0$. We now claim that if $d_\lambda\ne 0$ then $\lambda_i \le k_i$ for all $i$. This will complete the proof, since there are only finitely many elements $(\lambda_1, \lambda_2, \ldots, \lambda_n) \in \Lambda^+$ satisfying $\lambda_n >0$ and $\lambda_i \le k_i$ for $1\le i \le n$.

To prove the aforementioned claim, we appeal to Lemma \ref{L:llambdaktypes}. Suppose that $d_\lambda \ne 0$. Then there exists an element $f$ in $\AA(\Gamma; \rho)_{\pp}^\circ$ whose  component $f_\lambda$ in \eqref{e:decomposition} along $d_\lambda L(\lambda)$ is non-zero. Clearly, $f_\lambda$ generates the $K_\infty$-type $\rho=\rho_\k$, and hence $\rho_\k$ occurs in $L(\lambda)$. Now, Lemma \ref{L:llambdaktypes} implies that $\lambda_i \le k_i$ for $1\le i \le n$.
\end{proof}

Next recall the space $\AA(\Gamma; L(\k))^\circ$ defined before Proposition \ref{AA0nfindecomp2prop}, and define
$$
 \AA(\Gamma; \rho,  L(\k))^\circ =  \AA(\Gamma; \rho)^\circ \cap \AA(\Gamma; L(\k))^\circ.
$$
If $\rho = \rho_{\lambda}$ then by the results above we have
\begin{equation}\label{kilambdaieq}
 \AA(\Gamma; \rho,  L(\k))^\circ  \neq \{0\} \quad \Rightarrow  \quad 1 \le k_i \le \lambda_i \text{ for } 1\le i \le n.
\end{equation}

Let $N_{\rho}(\Gamma; L(\k))^\circ \subset N_{\rho}(\Gamma)^\circ $ and $\AA_\rho(\Gamma;  L(\k))^\circ \subset \AA_\rho(\Gamma)_{\pp}^\circ $ be the isomorphic images of $\AA(\Gamma; \rho,  L(\k))^\circ$ under the isomorphisms $N_{\rho}(\Gamma)^\circ \isomto \AA_\rho(\Gamma)_{\pp}^\circ \isomto \AA(\Gamma; \rho)_{\pp}^\circ$ given by Proposition \ref{p:iso}. Then we have orthogonal (with respect to the Petersson inner product) direct sum decompositions of finite-dimensional vector spaces
\begin{equation}\label{e:decom}
 \AA(\Gamma; \rho)_{\pp}^\circ = \bigoplus_{\k \in \Lambda^{++}}  \AA(\Gamma; \rho,  L(\k))^\circ, \quad N_\rho(\Gamma)^\circ = \bigoplus_{\k \in \Lambda^{++}} N_\rho(\Gamma; L(\k))^\circ,
\end{equation}
where we recall that $\Lambda^{++}$ consists of those $\k=(k_1, \ldots, k_n) \in \Lambda^+$ with $k_n\geq1$. Above, if $\lambda = (\lambda_1, \ldots, \lambda_n)$ is the highest weight of $\rho$, then the sum in \eqref{e:decom} can be taken over the (finitely many) $\k=(k_1, \ldots, k_n) \in \Lambda^{++}$ such that $k_i \le \lambda_i$ for all $i$, since the summands are zero otherwise.
\begin{remark}The decomposition \eqref{e:decom} together with Proposition \ref{AA0nfindecomp2prop} may be  viewed as a \emph{structure theorem} for the space of nearly holomorphic cusp forms from the representation-theoretic point of view.
\end{remark}
\subsection{Action of \texorpdfstring{$\Aut(\C)$}{}}\label{s:rationalnice}
Recall that $T$ denotes the space of symmetric $n\times n$ complex matrices. Let $V$ be a vector space with a rational structure $V_\Q$, i.e., $V_\Q \subset V$ is a vector-space over $\Q$ such that $V_\Q \otimes \C = V$. Then $V_\Q$ gives rise to a rational structure on $\mathrm{Ml}_e(T,V)$, as follows. Let $\{v_1,\ldots,v_d\}$ be a basis of $V_\Q$. For $i_1,\ldots,i_e,j_1,\ldots,j_e\in\{1,\ldots,n\}$, define the function $f_{(i_1, j_1), \ldots, (i_e, j_e)}\in\mathrm{Ml}_e(T,\C)$ by
\begin{equation}\label{basicfsdefeq}
 f_{(i_1, j_1), \ldots, (i_e, j_e)}(y^{(1)},  \ldots, y^{(e)}) := y^{(1)}_{i_1, j_1} \cdots   y^{(e)}_{i_e, j_e}.
\end{equation}
Then the elements $f_{(i_1, j_1), \ldots, (i_e, j_e)}v_i$, where $i_1,\ldots,i_e,j_1,\ldots,j_e$ run through $\{1,\ldots,n\}$ and $i$ runs through $\{1,\ldots,d\}$, define a rational structure on $\mathrm{Ml}_e(T,V)$, which is independent of the choice of basis of $V_\Q$. We also obtain a rational structure on $S_e(T,V)$, which we recall can be identified with the symmetric elements of $\mathrm{Ml}_e(T,V)$.

Let $t_{i,j}$, $1 \le i \le j \le n$ be indeterminates. For $i,j$ as above, define $t_{j,i} = t_{i,j}$ and let $\mathcal{T}$ denote the symmetric $n \times n$  matrix of indeterminates whose $(i,j)$ entry equals $t_{i,j}$. For a field $F$, let $F[\mathcal{T}]$ denote the algebra of polynomials in the indeterminates $t_{i,j}$; clearly $F[\mathcal{T}]$ can be identified with the polynomial ring over $F$ in $\frac{n^2+n}{2}$ variables. We let $F(\mathcal{T})$ denote the field of fractions of $F[\mathcal{T}]$. We let $\mathcal{T}^{-1}$ denote the (formal) inverse of $\T$ according to Cramer's rule, i.e., $\mathcal{T}^{-1} = \frac{1}{\det \mathcal{T}} \mathrm{adj}(\mathcal{T})$ where $\mathrm{adj}$ denote the adjugate. We define the algebra $F[\mathcal{T}^\pm] \subset F(\mathcal{T})$ as follows.
\begin{equation}\label{nice-fns-defn}
F[\mathcal{T}^\pm] := \{\text{polynomials over $F$ in the entries of } \mathcal{T} \text{ and } \mathcal{T}^{-1}\}.
\end{equation}
It is an easy exercise that $F[\mathcal{T}^\pm]$ equals the extension of the polynomial ring $F[\mathcal{T}]$ by $\frac{1}{\det \mathcal{T}}$. Equivalently, $F[\mathcal{T}^\pm]$ is the localization of $F[\mathcal{T}]$  in the multiplicative set consisting of the non-negative powers of $\det(\mathcal{T})$. Given $c \in F[\mathcal{T}^\pm]$ we obtain a well-defined  \emph{function} from the set of \emph{invertible} symmetric matrices over $F$ to $F$. As a special case of this which will be relevant for us, recall that a real, positive-definite matrix $y$ has a unique positive definite square-root $y^{1/2}$ and so for any $c \in \C[\mathcal{T}^\pm]$, the complex number $c(y^{1/2})$ makes sense.

We say that a non-zero $c \in \C[\mathcal{T}^\pm]$ is homogeneous of degree $m$ if $c(r\mathcal{T}) = r^m c(\mathcal{T})$. This makes the space $\C[\mathcal{T}^\pm]$ into a graded algebra, graded by degree $m \in \Z$.
We say that an element $c \in \C[\mathcal{T}^\pm]$ is  \emph{rational} if  $c \in \Q[\mathcal{T}^\pm]$. Let
\begin{equation}\label{vector-nice-fn-defn}
 V[\mathcal{T}^\pm] := \C[\mathcal{T}^\pm] \otimes_\C V.
\end{equation}
We define an action of $\Aut(\C)$ on the space $V[\mathcal{T}^\pm]$ as follows.  Given $c \in V[\mathcal{T}^\pm]$, write it as $c(y) = \sum_i c_i(y) v_i$ where $v_i$ ranges over a fixed rational basis of $V_\Q$ and $c_i \in \C[\mathcal{T}^\pm]$. For $\sigma \in \Aut(\C)$, we define $$ \tst {}^\sigma c(y) = \sum_i {}^\sigma c_i(y) v_i$$ where ${}^\sigma c_i$ is obtained by letting $\sigma$ act on the coefficients of $c_i$. This gives a well defined action of $\Aut(\C)$ on the space $V[\mathcal{T}^\pm]$  (that does not depend on the choice of the basis $\{v_i\}$ of $V_\Q$). An element $c \in V[\mathcal{T}^\pm]$  is defined to be rational if the components $c_i$ all belong to $\Q[\mathcal{T}^\pm]$. We let $V_\Q[\mathcal{T}^\pm]$ denote the $\Q$-vector space of rational elements in $V[\mathcal{T}^\pm]$ so that  \[V_\Q[\mathcal{T}^\pm] = \Q[\mathcal{T}^\pm] \otimes_\Q V_\Q.\] It is clear that an element $c$ in  $V[\mathcal{T}^\pm]$ belongs to $V_\Q[\mathcal{T}^\pm]$ if and only if ${}^\sigma c = c$ for all $\sigma \in \Aut(\C)$.

We let $N_S(\H_n, V) \subset C^\infty(\H_n, V)$ consist of all functions $f \in C^\infty(\H_n, V)$ with the property that there exists an integer $N$ (depending on $f$) such that $f$ has an absolutely and uniformly (on compact subsets) convergent expansion
\begin{equation}\label{e:fouriernice}
 f(z) = f(x + iy) = \sum_{h \in \frac{1}{N}M_n^{\sym}(\Z)}\!q_h((\pi y)^{1/2}) \exp(2 \pi i\, {\rm tr}(hz)),
\end{equation}
where each $q_h=q_{h, f}$ is an element of $V[\mathcal{T}^\pm]$. Observe that for any finite dimensional rational representation $\rho$ of $\GL_n(\C)$ on $V$ we have $N_\rho(\Gamma) \subseteq N_S(\H_n, V)$. Indeed, if $f \in N_\rho(\Gamma)$, the corresponding $q_h$ in \eqref{e:fouriernice} actually belong to the subalgebra $\C[\mathcal{T}^{-2}] \otimes_\C V$ of $V[\mathcal{T}^\pm]$ (cf.~\eqref{e:fouriernearholo}).

Given a function $f \in N_S(\H_n, V)$, we  define  ${}^\sigma\!f \in N_S(\H_n, V)$ by
\begin{equation}\label{e:fourierniceautc}
 {}^\sigma\!f(z) = \sum_h {}^\sigma\!q_h((\pi y)^{1/2}) \exp(2 \pi i\, {\rm tr}(hz)).
\end{equation}
A very special case of all this is the (well-known) action of $\Aut(\C)$ on $N_\rho(\Gamma)$. Suppose that $\rho$ is a representation of $\GL_n(\C)$ on $V$ that respects the rational structure on $V$ (meaning that $\rho$ restricts to a homomorphism from $\GL_n(\Q)$ to $\GL_\Q(V_\Q)$). The action of $\Aut(\C)$ on elements of $S_e(T,V)$ leads to an action of $\Aut(\C)$ on $N_\rho(\Gamma)$, via
\begin{equation}\label{e:autcnearholo}
 \rule{0ex}{2.5ex}^\sigma\!\bigg(\sum_h \!c_h((\pi y)^{-1}) \exp(2 \pi i\, {\rm tr}(hz))\bigg)
	 = \sum_h {}^\sigma\!c_h ((\pi y)^{-1}) \exp(2 \pi i\, {\rm tr}(hz)),\end{equation} where $c_h \in \bigcup_{e} S_e(T,V)$. This is a special case of the definition \eqref{e:fourierniceautc}.
From Theorem 14.12 (2) of \cite{shibook2}, we get the following result in this special setup.
\begin{equation}\label{e:nearholoshimuraautc}
 \text{For } f \in N^e_\rho(\Gamma) \text{ and } \sigma \in \Aut(\C), \text{ we have } {}^\sigma\!f \in N_\rho^e(\Gamma).
\end{equation}
In the special case that $\sigma$ equals the complex conjugation, we will denote ${}^\sigma\!f$ by $\bar{f}$. Note that $\bar{f}(z) = \overline{f(-\bar{z})}.$
We will need the following result, which is Theorem 14.12 (3) of \cite{shibook2}.
\begin{proposition}\label{shimura}
	Let $f \in N_\rho(\Gamma)$. For a positive integer $p$, let $D_\rho^p$ and $E^p$ be defined as in \eqref{e:DefDrho}. Then we have
   $$
    {}^\sigma\!((\pi  i)^{-p}D_\rho^p f) = (\pi i)^{-p}D_\rho^p ({}^\sigma\!f), \qquad {}^\sigma\!((\pi  i)^{p}E^p f) = (\pi i)^pE^p ({}^\sigma\!f).
   $$
\end{proposition}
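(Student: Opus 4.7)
The plan is to reduce the statement to a direct Fourier-expansion computation. This result is cited as Theorem 14.12(3) of \cite{shibook2}, but a self-contained verification would proceed as follows.

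First, I would reduce to the case $p = 1$ by induction. By Proposition \ref{DE-behav-prop}(1), we have $D_\rho^{p+1} = D_{\rho \otimes \tau^p} D_\rho^p$, and by definition $E^{p+1} = E \circ E^p$. Hence if $(\pi i)^{-1}D_\rho$ and $(\pi i)E$ are $\Aut(\C)$-equivariant (with respect to the appropriate representations $\rho \otimes \tau^k$ and $\rho \otimes \sigma^k$ at each stage), then so are $(\pi i)^{-p}D_\rho^p$ and $(\pi i)^p E^p$ after iterating.

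For the $p = 1$ case, given $f \in N_\rho(\Gamma)$ with Fourier expansion $f(z) = \sum_h c_h((\pi y)^{-1}) \exp(2\pi i \, {\rm tr}(hz))$, I would compute $D_\rho f$ and $Ef$ termwise using \eqref{e:defCE} and \eqref{e:DefDrho}. The key calculations are: (i) $\partial / \partial z_\nu$ applied to $\exp(2\pi i \, {\rm tr}(hz))$ produces a factor $2\pi i h_\nu$, so together with the normalization $(\pi i)^{-1}$ contributes only $\Q$-rational scalars; (ii) $\partial / \partial \bar z_\nu$ annihilates the exponential and acts on the polynomial factor $c_h((\pi y)^{-1})$ through $y = (z - \bar z)/(2i)$, producing factors of $\pm (2i)^{-1}$, which combined with the outer $(\pi i)$ normalization in $E$ leaves $\Q$-rational contributions; (iii) the changes of variable $u \mapsto yuy$ appearing in $C$ and $E$ combine with the $(\pi y)^{-1}$ already present inside $c_h$ to produce new Fourier coefficients lying in $V[\mathcal{T}^\pm]$; and (iv) the factors of $\rho(\Xi) = \rho(2y)$ and $(\rho \otimes \tau^e)(\Xi)^{-1}$ built into the definition of $D_\rho$ preserve rationality because $\rho$ is a rational representation of $\GL_n(\C)$, so $\rho(2y)$ acts $\Q$-linearly on the rational structure of $V$ when paired with rational symmetric matrices.

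Assembling these observations, the new Fourier coefficients of $(\pi i)^{-1}D_\rho f$ and $(\pi i)E f$ are $\Q$-rational functions (in the sense of Section \ref{s:rationalnice}) of the original $c_h$ and the entries of $h$. Since the $\Aut(\C)$-action on $N_\rho(\Gamma)$ defined via \eqref{e:autcnearholo} acts on the Fourier coefficients coefficient-wise, the operators $(\pi i)^{-1}D_\rho$ and $(\pi i)E$ commute with this action, as desired. The hard part will be the careful bookkeeping of all the $\pi$'s, $i$'s, $y$'s, and representation factors coming from the conjugation-like structure of $D_\rho$; the computation, though elementary, is intricate, which is presumably why the cleanest route is to simply cite Shimura's calculation.
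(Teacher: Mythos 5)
The paper offers no independent proof of this proposition; it is a verbatim citation of Theorem 14.12(3) of Shimura's book \cite{shibook2}, and that citation is the paper's intended ``proof.'' Your Fourier-expansion sketch is exactly the kind of verification that citation points to, and your reduction to the single-step case via Proposition \ref{DE-behav-prop}(1) (together with $E^{p+1}=E\,E^p$, applied with $\rho$ successively replaced by $\rho\otimes\tau^j$ or $\rho\otimes\sigma^j$) is sound. The one place you should sharpen is item (iv): the conjugations by $\rho(\Xi)=\rho(2y)$ and $(\rho\otimes\tau^e)(\Xi)^{-1}$, and the substitution $u\mapsto yuy$ inside $C$, introduce positive powers of the entries of $y$, so it is not a priori clear that one lands back in polynomials in $(\pi y)^{-1}$; the fact that $D_\rho$ carries $N^e_\rho$ into $N^e_{\rho\otimes\tau}$ is itself a prior result (encoded in \eqref{nh-characterization} and Proposition \ref{DE-behav-prop}) and should be invoked explicitly before the $\Aut(\C)$-equivariance claim. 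Granting that, the transcendental bookkeeping is as you say: $\partial_{z_\nu}$ on $e^{2\pi i\,{\rm tr}(hz)}$ contributes $2\pi i\,h_\nu$ with $h_\nu\in\Q$, $\partial_{z_\nu}$ and $\partial_{\bar z_\nu}$ on $(\pi y)^{-1}$ contribute $\pm\pi/(2i)$, and the remaining factors from $C$, $\rho(\Xi)$, $(\rho\otimes\tau^e)(\Xi)^{-1}$ (the powers of $2$ and the rational matrix entries, since $\rho$ respects $V_\Q$) are all $\Q$-rational, so the normalizations $(\pi i)^{\mp 1}$ absorb exactly what they should. In short, your argument reconstructs the proof of the cited theorem; it is not an alternative to anything the paper does, because the paper does nothing here but cite.
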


We will now prove some general results on $\Aut(\C)$-equivariance of operators, culminating in Proposition~\ref{p:master} below, which will be crucial for the results of the next subsection. To begin with, we prove a lemma that  will allow us to move from $\mathrm{Ml}_b(T,V)$-valued functions to $V$-valued functions, and will also clarify why we cannot restrict ourselves to the space $N_\rho(\Gamma)$ and instead need to consider the larger space $N_S(\H_n, V)$.

\begin{lemma}\label{l:trivaction}
 Let $e$, $b$ be positive integers, and for $1 \le i \le b$, let $u_i \in M_n^\sym(\Q)$ and $\epsilon_i \in \Z$.
 \begin{enumerate}
  \item Let $T_{>0}$ denote the set of real, symmetric, positive-definite,  $n \times n$ matrices. Let $q \in \bigcup_{a \le e}S_a(T, \mathrm{Ml}_b(T,V))$. Then there exists $r_q \in V[\T^\pm]$ such that for all $y \in T_{>0}$, $$r_q( y^{1/2}) = q( y^{-1})(y^{\frac{\epsilon_1}{2}} u_1  y^{\frac{\epsilon_1}{2}}, \ldots,  y^{\frac{\epsilon_b}{2}} u_b y^{\frac{\epsilon_b}{2}})
  $$
  and for any $\sigma \in \Aut(\C)$ we have
  $$
   {}^\sigma \! (r_q) = r_{{}^\sigma \!q}.
  $$
  \item Let $\rho$ be a finite dimensional, rational representation of $\GL_n(\C)$ on $\mathrm{Ml}_b(T,V)$ that respects the rational structure on $\mathrm{Ml}_b(T,V)$.  For each $f\in N_\rho(\Gamma)$, define the function $\theta f \in C^\infty(\H_n, V)$ by
  \[
   (\theta f)(z) = (f(z))((\pi y)^{\frac{\epsilon_1}{2}} u_1  (\pi y)^{\frac{\epsilon_1}{2}}, \ldots,  (\pi y)^{\frac{\epsilon_b}{2}} u_b (\pi y)^{\frac{\epsilon_b}{2}}).
  \]
  Then $\theta f \in N_S(\H_n, V)$ and for all $\sigma \in \Aut(\C)$ we have $\theta({}^\sigma \!f) = {}^\sigma \!( \theta f)$.
 \end{enumerate}
\end{lemma}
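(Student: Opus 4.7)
The plan is to treat Part 1 as a formal substitution statement, and then deduce Part 2 by applying Part 1 coefficient-by-coefficient to the Fourier expansion of $f$.

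For Part 1, the key observation is that if we set $\mathcal{Y}=y^{1/2}$, then both $y^{-1}=\mathcal{Y}^{-2}$ and, for each $i$, the matrix $y^{\epsilon_i/2}u_i y^{\epsilon_i/2}=\mathcal{Y}^{\epsilon_i}u_i\mathcal{Y}^{\epsilon_i}$ have entries expressible as polynomials in the entries of $\mathcal{Y}$ and $\mathcal{Y}^{-1}$, and moreover the latter expression uses only rational coefficients because $u_i\in M_n^\sym(\Q)$. To make this precise, I would fix a rational basis $\{v_1,\dots,v_d\}$ of $V_\Q$ and decompose $q$, using the basic multilinear monomials $f_{(i_1, j_1), \ldots, (i_b, j_b)}$ from \eqref{basicfsdefeq}, as a finite sum
\[
q(s)(w_1,\ldots,w_b) = \sum_\alpha q_\alpha(s)\,(w_1)_{i_1^\alpha,j_1^\alpha}\cdots(w_b)_{i_b^\alpha,j_b^\alpha}\,v_{k_\alpha},
\]
where each $q_\alpha$ is a polynomial of degree $\leq e$ in the entries of $s$ with complex coefficients (rational precisely when $q$ is rational). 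Replacing $\mathcal{Y}$ with the formal symmetric matrix $\T$ of indeterminates, the substitution
\[
r_q := \sum_\alpha q_\alpha(\T^{-2})\bigl(\T^{\epsilon_1}u_1\T^{\epsilon_1}\bigr)_{i_1^\alpha,j_1^\alpha}\cdots\bigl(\T^{\epsilon_b}u_b\T^{\epsilon_b}\bigr)_{i_b^\alpha,j_b^\alpha}\,v_{k_\alpha}
\]
lies in $V[\T^\pm]$ (for $\epsilon_i<0$ one uses the Cramer-rule definition of $\T^{-1}$ from the paper), and evaluating at $\T=y^{1/2}$ yields the asserted identity. The $\Aut(\C)$-equivariance ${}^\sigma(r_q) = r_{{}^\sigma q}$ is then transparent, because the only complex numbers entering the recipe are the coefficients of the $q_\alpha$'s; the data $u_i$, $\epsilon_i$, the basis $\{v_i\}$, and the resulting monomials in entries of $\T^{\pm 1}$ are all rational.

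For Part 2, I would use the Fourier expansion \eqref{e:fouriernearholo} of $f\in N_\rho(\Gamma)$,
\[
f(z)=\sum_h c_h((\pi y)^{-1})\exp(2\pi i\,{\rm tr}(hz)),
\]
with each $c_h$ in some $S_a(T,\mathrm{Ml}_b(T,V))$. Applying Part 1 to each $c_h$, but with $y$ replaced by $\pi y$, yields $r_{c_h}\in V[\T^\pm]$ such that $r_{c_h}((\pi y)^{1/2})$ equals the inner expression in the definition of $\theta f$. Substituting termwise gives
\[
(\theta f)(z) = \sum_h r_{c_h}((\pi y)^{1/2})\exp(2\pi i\,{\rm tr}(hz)),
\]
which exhibits $\theta f\in N_S(\H_n,V)$. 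The absolute and locally uniform convergence required in the definition of $N_S(\H_n, V)$ is inherited from the original Fourier expansion together with the local boundedness of the factors $(\pi y)^{\epsilon_i/2}u_i(\pi y)^{\epsilon_i/2}$ on compact subsets of $\H_n$. The equivariance $\theta({}^\sigma f)={}^\sigma(\theta f)$ then follows immediately from the identity ${}^\sigma r_{c_h}=r_{{}^\sigma c_h}$ of Part 1, combined with the definitions \eqref{e:autcnearholo} and \eqref{e:fourierniceautc}.

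The only place requiring genuine care is the bookkeeping of powers of $\T$ and $\T^{-1}$ in Part 1 when some $\epsilon_i$ are negative, which is also the reason why the image naturally lands in $N_S(\H_n, V)$ rather than in $N_\rho(\Gamma)$: Laurent-type expressions in entries of $\T$ typically arise, and these are precisely what the algebra $V[\T^\pm]$ in \eqref{vector-nice-fn-defn} is designed to accommodate. Thus this is a formal check rather than a real obstacle, and once Part 1 is established, Part 2 follows by reading off the Fourier coefficients.
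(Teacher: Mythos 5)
Your proposal is correct and takes essentially the same approach as the paper: decompose $q$ over the rational basis of multilinear monomials $f_{(i_1,j_1),\ldots,(i_b,j_b)}v_i$, pull the complex-coefficient polynomials $q_\alpha$ out, perform the formal substitution $\T\leftrightarrow y^{1/2}$ to land in $V[\T^\pm]$, and observe that equivariance follows because the only complex data are the coefficients of $q_\alpha$. Part 2 is then deduced termwise from the Fourier expansion, which is precisely what the paper means by ``an immediate consequence of part 1.''
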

\begin{proof}
We first prove part 1 of the lemma. Recall the definition of the functions $f_{(i_1, j_1), \ldots, (i_b, j_b)}$ in \eqref{basicfsdefeq}, and the rational structure of $\mathrm{Ml}_b(T,V)$ given by the elements $f_{(i_1, j_1), \ldots, (i_b, j_b)}v_i$.
From our definitions, we have that the function from $T_{>0}$ to $V$ given by
$$
 y \mapsto f_{(i_1, j_1), \ldots, (i_b, j_b)} ( y^{\frac{\epsilon_1}{2}} u_1  y^{\frac{\epsilon_1}{2}}, \ldots,  y^{\frac{\epsilon_b}{2}} u_b y^{\frac{\epsilon_b}{2}})v_i
$$
is the evaluation of a function in $V_\Q[\mathcal{T}^\pm]$ at $y^{1/2}$, i.e., there is $g_{(i_1, j_1), \ldots, (i_b, j_b);i} \in V_\Q[\mathcal{T}^\pm]$ such that
\[
 g_{(i_1, j_1), \ldots, (i_b, j_b);i}(y^{1/2}) =  f_{(i_1, j_1), \ldots, (i_b, j_b)} ( y^{\frac{\epsilon_1}{2}} u_1  y^{\frac{\epsilon_1}{2}}, \ldots,  y^{\frac{\epsilon_b}{2}} u_b y^{\frac{\epsilon_b}{2}})v_i.
\]
Now write $q = \sum c_{q;(i_1, j_1), \ldots (i_b, j_b);i} f_{(i_1, j_1), \ldots, (i_b, j_b)}v_i$ where $c_{q;(i_1, j_1), \ldots, (i_b, j_b);i} \in \C[\T]$ are of degree $\le e$ and the sum runs over all $i_1,\ldots,i_b,j_1,\ldots,j_b$ in $\{1,\ldots,n\}$ and $v_i$ runs through a fixed rational basis of $V_\Q$. By definition,
\begin{equation}\label{e:autclemma1}
 {}^\sigma c_{q;(i_1, j_1), \ldots, (i_b, j_b);i} = c_{{}^\sigma\!q;(i_1, j_1), \ldots, (i_b, j_b);i} \quad \text{and}
\end{equation}
\begin{equation}\label{e:autclemma2}
 q( y^{-1})(y^{\frac{\epsilon_1}{2}} u_1  y^{\frac{\epsilon_1}{2}}, \ldots,  y^{\frac{\epsilon_b}{2}} u_b y^{\frac{\epsilon_b}{2}}) = \sum c_{q;(i_1, j_1), \ldots, (i_b, j_b);i} (y^{-1}) g_{(i_1, j_1), \ldots, (i_b, j_b);i}(y^{1/2}).
\end{equation}
Now define $r_q(\T) = \sum c_{q;(i_1, j_1), \ldots, (i_b, j_b);i} (\T^{-2}) g_{(i_1, j_1), \ldots, (i_b, j_b);i}(\T).$
Then $r_q \in V[\T^\pm]$ satisfies the required properties by \eqref{e:autclemma1}, \eqref{e:autclemma2}.

Finally, part 2 is an immediate consequence of part 1.
\end{proof}

\begin{lemma}\label{l:rationalityrho}Let $f$ in $N_S(\H_n, V)$ and let $\rho$ be a finite-dimensional, rational representation of $\GL_n(\C)$ on $V$ of homogeneous degree $k$ such that $\rho$ respects the rational structure on $V$. For $u=\pm 1$, define the function $R^u_\rho(f)$ on $\H_n$ by $(R^u_\rho(f))(z) = \rho(y^{\frac{u}2})f(z)$. Then $R^u_\rho(f) \in  N_S(\H_n, V)$, and for each $\sigma \in \Aut(\C)$ we have \[\pi^{\frac{uk}{2}}R^u_\rho( {}^\sigma\!f) =  {}^\sigma\!(\pi^{\frac{uk}{2}}R^u_\rho(f)).\]
\end{lemma}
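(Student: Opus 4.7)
The plan is to compute the Fourier expansion of $\pi^{uk/2}R^u_\rho(f)$ explicitly and observe that each Fourier coefficient depends $\Aut(\C)$-equivariantly on the corresponding Fourier coefficient of $f$. Writing
\[
 f(z) = \sum_h q_h((\pi y)^{1/2})\exp(2\pi i\,\mathrm{tr}(hz)), \qquad q_h \in V[\T^\pm],
\]
and using that $\rho$ is homogeneous of degree $k$ to rewrite $\rho(y^{u/2}) = \rho(\pi^{-u/2}(\pi y)^{u/2}) = \pi^{-uk/2}\rho((\pi y)^{u/2})$, I would pull $\rho(y^{u/2})$ inside the sum (legitimate because $\rho(y^{u/2})$ is a finite-dimensional linear operator depending continuously on $y$, so uniform convergence on compacta is preserved) to obtain
\[
 \pi^{uk/2}R^u_\rho(f)(z) = \sum_h \rho((\pi y)^{u/2})\,q_h((\pi y)^{1/2})\exp(2\pi i\,\mathrm{tr}(hz)).
\]

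The key algebraic step is to check that for each $q \in V[\T^\pm]$ there exists $\tilde q \in V[\T^\pm]$ such that $\tilde q(T) = \rho(T^u)q(T)$ for every positive-definite symmetric $T$, and that the assignment $q \mapsto \tilde q$ commutes with the $\Aut(\C)$-action of Section~\ref{s:rationalnice}. Fixing a $\Q$-basis of $V_\Q$, the hypotheses that $\rho$ is polynomial of homogeneous degree $k$ and respects the rational structure imply that the matrix entries of $\rho(g)$ in this basis are homogeneous polynomials of degree $k$ in the entries of $g$ with rational coefficients. Substituting the symmetric matrix of indeterminates $g = \T$ places the matrix entries of $\rho(\T)$ in $\Q[\T]$; for $u = -1$, Cramer's rule $\T^{-1} = \det(\T)^{-1}\mathrm{adj}(\T)$ places the entries of $\rho(\T^{-1})$ in $\Q[\T^\pm]$. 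Hence $\rho(\T^u)$ may be viewed as an element of $\mathrm{End}(V_\Q)\otimes\Q[\T^\pm]$, and the left-multiplication map $q \mapsto \rho(\T^u)q$ on $V[\T^\pm] = \C[\T^\pm]\otimes_\C V$ is manifestly $\Aut(\C)$-equivariant because its defining coefficients are rational.

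Applying this to each $q_h$ gives $\pi^{uk/2}R^u_\rho(f) \in N_S(\H_n,V)$ with $h$-th Fourier coefficient $\tilde q_h$, and therefore also $R^u_\rho(f) \in N_S(\H_n,V)$. The identity ${}^\sigma\tilde q_h = \widetilde{{}^\sigma\!q_h}$ from the previous paragraph, applied coefficient by coefficient via \eqref{e:fourierniceautc}, matches the expansions of ${}^\sigma(\pi^{uk/2}R^u_\rho(f))$ and of $\pi^{uk/2}R^u_\rho({}^\sigma\!f)$ and yields the desired equivariance. The only ingredient requiring thought is the rationality of the matrix entries of $\rho(\T^u)$; once that is in place, the rest is formal bookkeeping.
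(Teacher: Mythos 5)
Your proof is correct and follows essentially the same route as the paper: expand $f$ as a Fourier series with $V[\T^\pm]$-coefficients, observe that $\rho(y^{u/2})$ has matrix entries lying in $\Q[\T^\pm]$ when evaluated at $y^{1/2}$, use homogeneity to absorb the $\pi^{uk/2}$ factor so that everything is written in terms of $(\pi y)^{1/2}$, and read off the $\Aut(\C)$-equivariance coefficient by coefficient. One small overstatement: you write that the hypotheses make $\rho$ \emph{polynomial} of homogeneous degree $k$ so that the entries of $\rho(\T)$ lie in $\Q[\T]$; the lemma only assumes $\rho$ is a rational representation of homogeneous degree $k$, and indeed in the applications (e.g.\ Proposition~\ref{p:master}) $\rho_2$ need not be polynomial --- but this is harmless, since for any rational representation that respects $V_\Q$, both $\rho(\T)$ and $\rho(\T^{-1})$ already have entries in $\Q[\T^\pm]$ (rationality of the morphism $\GL_n \to \GL(V)$ plus Cramer's rule), which is all your argument uses.
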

\begin{proof} By assumption, we can write
\[
 f(z) = \sum_h \!\left(\sum_a q_{h,a}((\pi  y)^{1/2}) v_a \right) \exp(2 \pi i\, {\rm tr}(hz)),
\]
where $q_{h,a} \in \C[\mathcal{T}^\pm]$ and the $v_a$ form a rational basis of $V$. Furthermore, from the assumptions on $\rho$ we have $\rho(y^{u/2}) v_a = \sum_b r_{u,a,b}(y^{1/2}) v_b$ where the $r_{u,a,b} \in \Q[\mathcal{T}^\pm]$ have homogeneous degree $uk$.  In particular $r_{u,a,b}((\pi  y)^{1/2}) = \pi^{uk/2} r_{u,a,b}(y^{1/2})$.
This gives us
\[
 \pi^{uk/2} R^u_\rho(f)(z) = \sum_h \!\Bigg(\sum_{a,b} q_{h,a}((\pi  y)^{1/2})  r_{u, a,b}((\pi  y)^{1/2}) v_b \Bigg) \exp(2 \pi i\, {\rm tr}(hz)),
\]
from which the assertion of the lemma is clear.
\end{proof}

We define $\U(\mathfrak k_\Q)$ to be the $\Q$-subalgebra of $\U(\mathfrak k_\C)$ generated by the various $B_{a_1, b_1}$ (see \eqref{BijEijdefeq}). The elements of $\U(\mathfrak k_\Q)$ are sums of the form $c_0 + \sum_{i=1}^n c_i B_{a^i_1, b^i_1} \cdots B_{a^{i}_{u_i}, b^{i}_{u_i}}$ where $n \ge 0$, $c_i \in \Q$ for $0 \le i \le n$, $u_i \ge 1$ for $1 \le i \le n$, and $1\le a^i_k, b^i_k \le n$ for $1\le k \le u_i$.

Recall that for each representation $(\rho, V)$ of $\GL_n(\C)$, the derived representation of $\U(\mathfrak k_\C)$ on $V$  is also denoted by $\rho$. For a general $X \in \U(\mathfrak k_\C)$, $\rho(X) \in \mathrm{End}(V)$ may not be invertible (i.e., $\rho(X)$ may not be in $\GL(V)$). However the following lemma is immediate.

\begin{lemma}\label{l:trivialX}Let $X \in \U(\mathfrak k_\Q)$ and let $\rho$ be a finite-dimensional representation of $\GL_n(\C)$ on $V$. Then for $i=1,2$, there exists $X_i \in \U(\mathfrak k_\Q)$ such that $X= X_1 +X_2$ and $\rho(\iota(X_i))$ is invertible.
\end{lemma}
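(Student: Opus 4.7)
The plan is to reduce the problem to a one-parameter pencil and invoke the fact that adding a generic scalar multiple of the identity to any endomorphism makes it invertible. Specifically, I would attempt the decomposition
\[
X = (X + c \cdot 1) + (-c \cdot 1)
\]
for a suitable nonzero rational scalar $c$, where $1$ denotes the unit of $\U(\mathfrak k_\C)$. By the explicit description of $\U(\mathfrak k_\Q)$ given in the paragraph preceding the lemma (elements are sums $c_0 + \sum_i c_i B_{a_1^i,b_1^i}\cdots B_{a_{u_i}^i,b_{u_i}^i}$ with $c_0,c_i \in \Q$), the constants $\pm c \cdot 1$ lie in $\U(\mathfrak k_\Q)$, so both summands $X_1 := X + c \cdot 1$ and $X_2 := -c \cdot 1$ belong to $\U(\mathfrak k_\Q)$.

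Next I would verify the invertibility conditions. Because $\iota$ is an anti-involution of algebras, it fixes the unit, hence $\iota(-c\cdot 1) = -c \cdot 1$ and
\[
\rho(\iota(-c \cdot 1)) = -c \cdot \mathrm{id}_V,
\]
which is invertible as soon as $c \neq 0$. Similarly $\rho(\iota(X + c\cdot 1)) = \rho(\iota(X)) + c \cdot \mathrm{id}_V$, and this is invertible in $\mathrm{End}(V)$ precisely when $-c$ is not an eigenvalue of the endomorphism $\rho(\iota(X))$ of the finite-dimensional space $V$.

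The conclusion then follows from the elementary fact that $\rho(\iota(X))$, being an endomorphism of a finite-dimensional complex vector space, has only finitely many eigenvalues (at most $\dim V$). Since $\Q$ is infinite, I can choose $c \in \Q$ avoiding $0$ together with the finite set $\{-\lambda : \lambda \in \mathrm{Spec}(\rho(\iota(X)))\}$. For any such $c$, both $\rho(\iota(X_1))$ and $\rho(\iota(X_2))$ are invertible, as required. There is no substantive obstacle in this argument; the only minor point to record explicitly is that $\iota$ acts trivially on scalars in $\U(\mathfrak k_\C)$, which is immediate from its being an algebra anti-involution.
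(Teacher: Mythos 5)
Your proof is correct and follows essentially the same idea as the paper: perturb by a rational scalar to make the image under $\rho\circ\iota$ invertible. The paper uses the symmetric split $X=\tfrac12(X+c)+\tfrac12(X-c)$ with $c$ large, whereas you use $X=(X+c\cdot1)+(-c\cdot1)$ with $c$ avoiding the finitely many bad values; the two are only cosmetically different.
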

\begin{proof} We choose a sufficiently large $c \in \Q$ such that $\rho(\iota(X)) + c I$ and $\rho(\iota(X)) - c I$ are both invertible, where $I$ denotes the identity map on $V$. Now the result follows from taking $X_1 = \frac12(X + c)$, $X_2 = \frac12(X - c)$.
\end{proof}

\begin{lemma}\label{l:Babrationalautc}Let $\rho$ be an irreducible rational representation of $\GL_n(\C)$ on $V$ such that $\rho$ respects the rational structure on $V$. Let $X \in \U(\mathfrak k_\Q)$  and  for each $f$ in $N_S(\H_n, V)$, define $\widetilde{X} f \in C^\infty(\H_n, V)$ by $(\widetilde{X} f)(z) = \rho(\iota(X)) f(z)$. Then $\widetilde{X} f \in N_S(\H_n, V)$ and for each $\sigma \in \Aut(\C)$, we have \[{}^\sigma\!(\widetilde{X} f) = \widetilde{X}({}^\sigma\! f).\]

Furthermore, suppose that $\rho(\iota(X))$ is invertible and let the representation $\rho'$  of $\GL_n(\C)$ on $V$  be given by $\rho'(h) = \rho(\iota(X))\rho(h)\rho(\iota(X))^{-1}$. Then  the following hold:
\begin{enumerate}
 \item  If $f \in N_\rho(\Gamma)$, then $\widetilde{X} f \in N_{\rho'}(\Gamma)$,
 \item $(\widetilde{X} f)^{\rho'} = Xf^{\rho}$,
 \item    $\rho'$ respects the rational structure on $V$.
\end{enumerate}
\end{lemma}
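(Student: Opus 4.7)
The strategy is to recognize that the entire lemma reduces to tracking how the fixed linear endomorphism $\rho(\iota(X))$ of $V$ interacts with Fourier expansions, the slash action, and the map $f \mapsto f^\rho$. The central observation is that $\rho(\iota(X))$ is $\Q$-rational: under the identification $\mathfrak{k}_\C \simeq \mathfrak{gl}_n(\C)$ sending $B_{i,j}$ to $e_{i,j}$, the anti-involution $\iota$ corresponds to matrix transpose and hence preserves $\U(\mathfrak{k}_\Q)$. Since $\rho$ is a rational representation defined over $\Q$, its derived action carries $\U(\mathfrak{k}_\Q)$ into $\mathrm{End}_\Q(V_\Q)$, so $\rho(\iota(X))$ has a rational matrix in any rational basis of $V_\Q$.

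For the first claim, I would expand $f = \sum_h q_h((\pi y)^{1/2})\exp(2\pi i\,\tr(hz))$ with $q_h \in V[\T^\pm]$ and apply $\rho(\iota(X))$ pointwise, which produces the Fourier expansion of $\widetilde{X}f$ with new coefficients $\rho(\iota(X))q_h$. Because a $\Q$-rational operator on $V$ preserves $V_\Q[\T^\pm]$, these new coefficients remain in $V[\T^\pm]$, so $\widetilde{X}f \in N_S(\H_n, V)$. The $\Aut(\C)$-equivariance is then immediate from \eqref{e:fourierniceautc}, since the coefficient-wise $\sigma$-action on $V[\T^\pm]$ commutes with any $\Q$-rational endomorphism of $V$, giving ${}^\sigma(\rho(\iota(X))q_h) = \rho(\iota(X))({}^\sigma q_h)$ termwise.

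For the second part, assume $\rho(\iota(X))$ is invertible. I would verify each item by unwinding definitions. For (1), using $\rho'(h)^{-1} = \rho(\iota(X))\rho(h)^{-1}\rho(\iota(X))^{-1}$, the slash action telescopes:
\[
(\widetilde{X}f)|_{\rho'}\gamma(z) = \rho(\iota(X))\rho(J(\gamma,z))^{-1}\rho(\iota(X))^{-1}\rho(\iota(X))f(\gamma z) = \rho(\iota(X))(f|_\rho\gamma)(z) = \widetilde{X}f(z),
\]
and near-holomorphy is preserved because $\rho(\iota(X))$ merely reshuffles the coefficient vectors in the polynomial-in-$y^{-1}$ expansion of $f$, a notion which, as noted after \eqref{nh-characterization}, depends only on $V$ and not on $\rho$. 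For (2), the same cancellation at $g\cdot iI_n$ gives $(\widetilde{X}f)^{\rho'}(g) = \rho(\iota(X))f^\rho(g)$, which equals $(Xf^\rho)(g)$ by \eqref{e:liealgebraactionvec}. For (3), since the inverse of an invertible matrix with $\Q$-rational entries has $\Q$-rational entries, both $\rho(\iota(X))$ and $\rho(\iota(X))^{-1}$ are $\Q$-rational, so $\rho'(h) = \rho(\iota(X))\rho(h)\rho(\iota(X))^{-1}$ sends $\GL_n(\Q)$ into $\GL_\Q(V_\Q)$. I do not expect any real obstacle; the entire proof is a careful bookkeeping exercise verifying that rationality is preserved by a fixed invertible linear operator.
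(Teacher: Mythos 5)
Your proof is correct and follows essentially the same route as the paper: it hinges on the observation \eqref{e:rationallieaction} that $\rho(\iota(X)) \in \mathrm{End}_\Q(V_\Q)$, a Fourier-coefficientwise check for the $N_S$ and $\Aut(\C)$ claims, a slash-action telescoping for part~(1), the cancellation $\rho'(J)^{-1}\rho(\iota(X)) = \rho(\iota(X))\rho(J)^{-1}$ for part~(2), and $\Q$-rationality of the inverse for part~(3). The one small place you diverge is in part~(2), where you compute $(\widetilde{X}f)^{\rho'}(g) = \rho(\iota(X))f^\rho(g)$ directly for arbitrary $g\in\Sp_{2n}(\R)$ by conjugating $\rho'(J(g,iI_n))^{-1}$, whereas the paper first evaluates at the special elements $g_z$ and then extends to all of $\Sp_{2n}(\R)$ by checking agreement of the $K_\infty$-transformation laws of both sides; your route is marginally more direct but both are sound.
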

\begin{proof}
First of all, we note that
\begin{equation}\label{e:rationallieaction}  \rho(\iota(X)) \in  {\rm End}_\Q(V_\Q)\end{equation}
This follows from the definition of the rational structure  and the fact that the $\rho(B_{a_i, b_i})$ preserve $V_\Q$.

We  now show that $\widetilde{X} f \in N_S(\H_n, V)$ and that ${}^\sigma\!(\widetilde{X} f) = \widetilde{X}({}^\sigma\! f)$ for each $\sigma \in \Aut(\C)$. Since  $f\in N_S(\H_n, V)$, we can write
\[
 f(z) = \sum_h \!\left(\sum_a q_{h,a}((\pi  y)^{1/2}) v_a \right) \exp(2 \pi i\, {\rm tr}(hz)),
\]
where $q_{h,a} \in \C[\mathcal{T}^\pm]$ and the $v_a$ form a rational basis of $V$. Using \eqref{e:rationallieaction} we can write   $\rho(\iota(X)) v_a = \sum_b r_{a,b}v_b$ where the $r_{a,b} \in \Q$. This gives us
\[
 (\widetilde{X} f )(z) = \sum_h \!\Bigg(\sum_{a,b} q_{h,b}((\pi  y)^{1/2})  r_{a,b} v_b \Bigg) \exp(2 \pi i\, {\rm tr}(hz)),
\]
from which the required facts follow immediately.

Next, for each $z=x+iy \in \H_n$, put $g_z = \mat{y^{1/2}}{xy^{-1/2}}{}{y^{-1/2}}$. Using \eqref{frho-denf} and \eqref{e:liealgebraactionvec}, we see that
\[
 (\widetilde{X} f)^{\rho'}\left(g_z\right) = \rho'(y^{1/2}) (\widetilde{X}f)(z)= \rho(\iota(X)) \rho(y^{1/2}) f(z) = \rho(\iota(X)) f^{\rho} (g_z)=  Xf^{\rho}  \left(g_z\right).
\]
On the other hand,  using \eqref{e:ktranseq}, \eqref{e:liealgebraactionvec}   we observe that for each $g \in \Sp_{2n}(\R)$, $k \in K_\infty$, $(\widetilde{X} f)^{\rho'}(gk) = \rho'(\iota(k))(\widetilde{X} f)^{\rho'}(g)$, $Xf^{\rho}(gk) = \rho'(\iota(k))Xf^{\rho}(g)$. Since each element of $\Sp_{2n}(\R)$ can be written as a product of an element of the form $g_z$ and an element of $K_\infty$, it follows that $(\widetilde{X} f)^{\rho'} = Xf^{\rho}$.

Next, suppose that $f \in N_\rho(\Gamma)$. We will show then that $\widetilde{X} f \in N_{\rho'}(\Gamma)$. It is clear that $\widetilde{X} f$ is nearly holomorphic in this case, so we only need to show that $(\widetilde{X} f) |_{\rho'} \gamma = \widetilde{X} f$ for all $\gamma \in \Gamma$. This follows  from the calculation
\[
 (\widetilde{X} f)(\gamma z)=\rho(\iota(X)) f(\gamma z) = \rho(\iota(X))\rho (J(\gamma, z)) f(z) = \rho' (J(\gamma, z))  \widetilde{X} f(z).
\]
The assertion that $\rho'$ respects the rational structure on $V$ follows from \eqref{e:rationallieaction} and the fact that $\rho$ respects the rational structure on $V$.
\end{proof}
\begin{proposition}\label{p:master}
 Let $\rho_1$ and $\rho_2$ be finite-dimensional rational representations of $\GL_n(\C)$ on $V$ of homogeneneous degrees $k(\rho_1)$ and $k(\rho_2)$ respectively and admitting a common rational structure on $V$; assume also that $\rho_1$ is irreducible. Let $R \in \U(\g_\C)$ be of the form
 \[
  R =  \sum_i  c_i  E_{+,m_1,n_1}\cdots E_{+,m_{s_i}, n_{s_i}} E_{-,p_1, q_1}\cdots E_{-,p_{t_i}, q_{t_i}} B_{a_1, b_1} \cdots B_{a_{u_i}, b_{u_i}},
 \]
 where the $c_i$ are rational numbers and $s_i$, $t_i$, $u_i$ are non-negative integers. Let $\Gamma$ be a congruence subgroup of $\Sp_{2n}(\Q)$. For each $f$ in $N_{\rho_1}(\Gamma)$, define the function $\widetilde{R} f \in C^\infty(\H_n, V)$ by
 \begin{equation}\label{p:mastereq1}
  (\widetilde{R} f)(z) = \rho_2(y^{-1/2})\left((Rf^{\rho_1})\mat{y^{1/2}}{xy^{-1/2}}{}{y^{-1/2}}\right).
 \end{equation}
 Then $\widetilde{R} f \in N_S(\H_n, V)$ and
 \begin{equation}\label{p:mastereq2}
  \rule{0ex}{2.5ex}^\sigma\!\Big(\pi^{\frac{k(\rho_1)-k(\rho_2)}{2}}\widetilde{R} f\Big) = \pi^{\frac{k(\rho_1)-k(\rho_2)}{2}}\widetilde{R} ({}^\sigma\!f)
 \end{equation}
 for all $\sigma \in \Aut(\C)$.
\end{proposition}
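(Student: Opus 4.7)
The plan is to decompose the operator $\widetilde{R}$ as a composition of simpler operators whose $\Aut(\C)$-equivariance has already been established in this subsection, and to track the resulting powers of $\pi$ at each stage. By the $\C$-linearity of $R \mapsto \widetilde{R}f$ and of \eqref{p:mastereq2} in $R$, it suffices to prove the proposition when $R$ is a single monomial $R_0 = E_{+,m_1,n_1}\cdots E_{+,m_s,n_s}E_{-,p_1,q_1}\cdots E_{-,p_t,q_t}X$ with $X = B_{a_1,b_1}\cdots B_{a_u,b_u} \in \U(\mathfrak k_\Q)$. By Lemma \ref{l:trivialX} applied to $\rho_1$, after another linear splitting we may further assume that $\rho_1(\iota(X))$ is invertible.

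The $X$-part is then handled by Lemma \ref{l:Babrationalautc}: the function $\widetilde{X}f$ lies in $N_{\rho_1'}(\Gamma)$, where $\rho_1'(h) := \rho_1(\iota(X))\rho_1(h)\rho_1(\iota(X))^{-1}$ has the same homogeneous degree $k(\rho_1)$ as $\rho_1$ and respects the rational structure on $V$; moreover $(\widetilde{X}f)^{\rho_1'} = Xf^{\rho_1}$ and ${}^\sigma(\widetilde{X}f) = \widetilde{X}({}^\sigma f)$. For the $E_\pm$-part, a direct comparison of \eqref{iota-defn} and \eqref{BijEijdefeq} yields $E_{\pm,i,j} = \pm 2i\,\iota_\pm(e_{i,j}+e_{j,i})$. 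Applying Proposition \ref{alg-op-reln-prop}(2) to $\widetilde{X}f \in N_{\rho_1'}(\Gamma)$ with the signs $\varepsilon_1 = \cdots = \varepsilon_s = +$, $\varepsilon_{s+1} = \cdots = \varepsilon_{s+t} = -$, and using Proposition \ref{DE-behav-prop}(1) to collapse the iterated compositions, one obtains
$$
 (R_0 f^{\rho_1})(g) \;=\; (2i)^s(-2i)^t\,G^{\rho_2''}(g)(w_1,\ldots,w_{s+t}),
$$
where $G := D^{s}_{\rho_1'\otimes\sigma^t}E^{t}\widetilde{X}f \in N_{\rho_2''}(\Gamma)$, $\rho_2'' := \rho_1'\otimes\tau^s\otimes\sigma^t$, and each $w_i \in T$ is the explicit rational matrix $e_{m_i,n_i}+e_{n_i,m_i}$ (for $i\le s$) or $e_{p_{i-s},q_{i-s}}+e_{q_{i-s},p_{i-s}}$ (for $i>s$). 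By Proposition \ref{shimura} applied to each $D$ and $E$ in turn, the composition $(\pi i)^{t-s}G$ is $\Aut(\C)$-equivariant as a function of $\widetilde{X}f$.

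Next, evaluating at $g = g_z := \mat{y^{1/2}}{xy^{-1/2}}{}{y^{-1/2}}$ and unfolding the explicit action of $\rho_2''(y^{1/2})$ on $\mathrm{Ml}_{s+t}(T,V)$ (using that $y$ is symmetric), one obtains
$$
 (\widetilde{R_0}f)(z) = (2i)^s(-2i)^t\rho_2(y^{-1/2})\rho_1(\iota(X))\rho_1(y^{1/2})\rho_1(\iota(X))^{-1}G(z)\bigl(y^{\eta_1/2}w_1 y^{\eta_1/2},\ldots,y^{\eta_{s+t}/2}w_{s+t}y^{\eta_{s+t}/2}\bigr),
$$
where $\eta_i = +1$ for $i\le s$ and $\eta_i = -1$ for $i>s$. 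Lemma \ref{l:trivaction}(2) applied to $G$ with these parameters produces a function $\theta G \in N_S(\H_n,V)$ satisfying $G(z)\bigl(y^{\eta_1/2}w_1 y^{\eta_1/2},\ldots\bigr) = \pi^{t-s}\theta G(z)$ (by multilinearity in the arguments) and ${}^\sigma(\theta G) = \theta({}^\sigma G)$; this already shows that $\widetilde{R_0}f \in N_S(\H_n,V)$. Substituting,
$$
 (\widetilde{R_0}f)(z) = (2i)^s(-2i)^t\pi^{t-s}\rho_2(y^{-1/2})\rho_1(\iota(X))\rho_1(y^{1/2})\rho_1(\iota(X))^{-1}\theta G(z).
$$

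To finish, I will assemble the $\pi$-factors. Writing $\pi^{t-s} = (\pi i)^{t-s}\cdot i^{s-t}$ and using Lemma \ref{l:rationalityrho} applied to $\rho_1$ (with $u=+1$) and to $\rho_2$ (with $u=-1$), together with the observation that conjugation by the rational endomorphism $\rho_1(\iota(X))$ commutes with the $\Aut(\C)$-action (by \eqref{e:rationallieaction}), the operator
$$
 \pi^{(k(\rho_1)-k(\rho_2))/2}(\pi i)^{t-s}\rho_2(y^{-1/2})\rho_1(\iota(X))\rho_1(y^{1/2})\rho_1(\iota(X))^{-1}\theta
$$
is $\Aut(\C)$-equivariant as an operator on $N_{\rho_2''}(\Gamma)$, and when applied to $G$ it produces $(-2)^{-(s+t)}\cdot\pi^{(k(\rho_1)-k(\rho_2))/2}\widetilde{R_0}f$, since the remaining scalar factor $(2i)^s(-2i)^ti^{s-t} = (-2)^{s+t}$ is rational. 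This establishes \eqref{p:mastereq2}. The main obstacle is the careful bookkeeping of representations and constants: ensuring that the intermediate representations $\rho_1 \to \rho_1' \to \rho_2'' \to V$ all respect the rational structure, that each operator in the chain is $\Aut(\C)$-equivariant up to an explicit power of $\pi$ and $i$, and that the accumulated $\pi$-powers collapse to exactly $\pi^{(k(\rho_1)-k(\rho_2))/2}$ with only a rational (indeed, integer) scalar left over.
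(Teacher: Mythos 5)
Your proposal is correct and follows essentially the same path as the paper's proof: reduce by linearity and Lemma \ref{l:trivialX} to a monomial with $\rho_1(\iota(X))$ invertible, peel off the $\mathfrak{k}$-part via Lemma \ref{l:Babrationalautc}, convert the $\p_\pm$-part to the differential operators $D$ and $E$ via Proposition \ref{alg-op-reln-prop}, and then assemble the $\pi$-powers through Lemmas \ref{l:trivaction} and \ref{l:rationalityrho} and Proposition \ref{shimura}. The explicit identification $E_{\pm,i,j}=\pm2i\,\iota_\pm(e_{i,j}+e_{j,i})$, the explicit form of $\mathcal{D}_E$, and the computation that the leftover scalar equals $(-2)^{s+t}$ are correct and are the same bookkeeping the paper does implicitly.
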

\begin{proof}
By linearity and using Lemma \ref{l:trivialX}, it suffices to consider the case
$$
 R= E_{+,m_1,n_1}\cdots E_{+,m_{s}, n_{s}} E_{-,p_1, q_1}\cdots E_{-,p_{t}, q_{t}} X
$$
where $X \in \U(\mathfrak k_\Q)$ and $\rho_1(\iota(X))$ is invertible. Put $E=  E_{+,m_1,n_1}\cdots E_{+,m_{s}, n_{s}} E_{-,p_1, q_1}\cdots E_{-,p_{t}, q_{t}}$, $g_f = \rho_1(\iota(X)) f$, and $\rho'(h) = \rho_1(\iota(X))\rho_1(h)\rho_1(\iota(X))^{-1}$. For each $z=x+iy \in \H_n$, put $g_z = \mat{y^{1/2}}{xy^{-1/2}}{}{y^{-1/2}}$. By Lemma~\ref{l:Babrationalautc},
\begin{equation}\label{e:master1}
 (\widetilde{R} f)(z) = \rho_2(y^{-1/2}) \left((Eg_f^{\rho'})(g_z)\right)\end{equation}
and
\begin{equation}\label{e:master2}
 g_f \in N_{\rho'}(\Gamma), \quad {}^\sigma\!(g_f) = g_{({}^\sigma\!f)}.
\end{equation}
For $1\le a \le s$, $1 \le b \le t$, let $u_a$ and $v_b$ be elements such that $\iota_+(u_a) = E_{+,m_a,n_a}$,  $\iota_-(v_b) = E_{-,p_b,q_b}$.
By (\ref{iota-defn}) and (\ref{BijEijdefeq}), we see that $iu_a, iv_b \in M_n^{\rm sym}(\Q)$. Using part 2 of Proposition \ref{alg-op-reln-prop} and Proposition \ref{shimura}, we see that there exists an operator $\mathcal{D}_E: C^\infty(\H_n, V) \rightarrow C^\infty(\H_n, \mathrm{Ml}_{s+t}(T, V))$ such that \begin{equation}\label{e:master3}
{}^\sigma\!((\pi i)^{t-s}\mathcal{D}_E g_f) =(\pi i)^{t-s}\mathcal{D}_E ({}^\sigma\!g_f) =  (\pi i)^{t-s}\mathcal{D}_E (g_{{}^\sigma\!f})\end{equation}
and such that the following identities hold
\begin{align*}
 (Eg_f^{\rho'})(g_z)&= (\iota_+(u_1)\ldots \iota_+(u_s)\iota_-(v_1) \ldots \iota_{-}(v_t)g_f^{\rho'})(g_z)
 \label{eq:1st}
 \\ &=\left((\mathcal{D}_E g_f)^{\rho' \otimes \tau^s \otimes \sigma^t}(g_z)\right)(u_1,\ldots, u_s, v_1, \ldots, v_t)\\
 &=\left((\rho'\otimes\tau^s\otimes\sigma^t)(y^{1/2})
 (\mathcal{D}_E g_f)(z)\right)(u_1,\ldots, u_s, v_1, \ldots, v_t)\\
 &=\rho'(y^\frac12)\left((\mathcal{D}_E g_f)(z)(y^\frac12u_1y^\frac12, \ldots y^\frac12u_sy^\frac12, y^{-\frac12}v_1y^{-\frac12}, \ldots, y^{-\frac12}v_ty^{-\frac12} )\right).
\end{align*}
Combining this with \eqref{e:master1} and using multilinearity we obtain
\begin{equation}\label{e:master4} \textstyle
 (\widetilde{R} f)(z) = \rho_2(y^{-\frac12})\rho'(y^\frac12) \left(((\pi i)^{t-s}\mathcal{D}_E g_f)(z)\left((\pi  y)^\frac12(i u_1)(\pi  y)^\frac12, \ldots, (\pi  y)^{-\frac12}(-i v_t)(\pi  y)^{-\frac12} \right)\right).
\end{equation}
Now, combining \eqref{e:master4} with Lemma \ref{l:trivaction}, Lemma \ref{l:rationalityrho}, and \eqref{e:master3}, we obtain the desired result.
\end{proof}
\subsection{Arithmeticity of certain differential operators on nearly holomorphic cusp forms} \label{s:proj}\label{s:nearholo}
For each $\k \in \Lambda^+$, let $\chi_\k$ be the character by which $\mathcal{Z}$ acts on $L(\k)$. (Note that this notation differs from what we used in \cite{PSS14};  $\chi_\k$ was denoted $\chi_{\k + \varrho}$ there.)
\begin{lemma}\label{near-holo-arch-lemma}
 Let $\k = (k_1, \ldots, k_n)$ and $\k'=(k'_1, \ldots, k'_n)$ be elements of $\Lambda^+$ such that either $k_n,k'_n\geq n$ or $k_n+k'_n>2n$. Then $\chi_{\k'} = \chi_\k$ if and only if $\k'=\k$.
\end{lemma}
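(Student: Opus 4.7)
I would apply Harish-Chandra's theorem on infinitesimal characters. Take the Borel subalgebra $\mathfrak{b} := \h_\C \oplus \mathfrak{k}_+ \oplus \p_-$ of $\g_\C$, where $\mathfrak{k}_+$ is the span of the positive compact root vectors $\{B_{i,j} : i < j\}$ (as in the proof of Lemma \ref{L:llambdaktypes}). The generator $1 \otimes v_\k$ of $N(\k)$ is killed by $\mathfrak{k}_+$ (since $v_\k$ is a $\mathfrak{k}_+$-highest-weight vector of $F_\k$) and by $\p_-$ (by the definition of $N(\k)$), so it is a $\mathfrak{b}$-highest-weight vector of $N(\k)$, and hence of its irreducible quotient $L(\k)$, of weight $\k$. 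For this Borel the set of positive roots is
$$
 \Phi^+ \;=\; \{e_i - e_j : 1 \le i < j \le n\} \cup \{-e_i - e_j : 1 \le i \le j \le n\},
$$
and a direct computation (summing the two families separately) gives $\varrho := \tfrac{1}{2}\sum_{\alpha \in \Phi^+}\alpha = (-1, -2, \ldots, -n)$. By Harish-Chandra, $\chi_\k = \chi_{\k'}$ if and only if $\k + \varrho$ and $\k' + \varrho$ are conjugate under the Weyl group $W$ of $\mathfrak{sp}_{2n}(\C)$; since $W$ acts on $\C^n$ by signed permutations of coordinates, this amounts to the multiset identity
\begin{equation}\label{e:abscond}
 \{|k_i - i| : 1 \le i \le n\} \;=\; \{|k'_i - i| : 1 \le i \le n\}.
\end{equation}

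The ``if'' direction is trivial. Assume \eqref{e:abscond} for the converse. If $k_n, k'_n \ge n$, then $k_i \ge k_n \ge n \ge i$ (and similarly for $\k'$), so both $(k_i - i)_{i=1}^n$ and $(k'_i - i)_{i=1}^n$ are strictly decreasing sequences of non-negative integers; equality of their multisets forces pointwise equality, and $\k = \k'$.

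Now suppose instead $k_n + k'_n > 2n$. Since the hypothesis and conclusion are symmetric in $\k$ and $\k'$, we may assume $k'_n \ge k_n$, whence $k'_n > n$; thus $a'_i := k'_i - i$ is a strictly decreasing sequence of positive integers. Set $a_i := k_i - i$ (strictly decreasing, possibly with negative tail). I would prove $k_i = k'_i$ by induction on $i$. Given $k_j = k'_j$ for $j < i$, \eqref{e:abscond} restricts to $\{|a_j| : j \ge i\} = \{a'_j : j \ge i\}$. The maximum of the right-hand side is $a'_i$; on the left, strict monotonicity of $(a_j)_{j \ge i}$ forces the maximum absolute value to be $\max(a_i, -a_n)$. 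If $a_i \ge -a_n$, then $a_i = a'_i$, giving $k_i = k'_i$. Otherwise $-a_n = a'_i$, i.e.\ $k_n + k'_i = n + i$; combining with $k_n + k'_n > 2n$ yields $k'_n > k'_i + (n - i) \ge k'_i$, contradicting $k'_n \le k'_i$.

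The only delicate step is the induction in the second case: the \emph{strictness} of the inequality $k_n + k'_n > 2n$ is precisely what rules out the sign-flip matching $-a_n = a'_i$ at each stage, and without it one can already produce counterexamples at $n = 2$ (e.g.\ $\k = (3,1)$ and $\k' = (3,3)$).
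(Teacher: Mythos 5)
Your proof is correct, and it follows the same underlying strategy as the paper: identify the infinitesimal character of $L(\k)$ via Harish-Chandra and reduce to a statement about orbits of $\k+\varrho$ under the hyperoctahedral Weyl group of $\mathfrak{sp}_{2n}$. The paper executes the combinatorics by explicitly writing a Weyl element as $w=\tau_I\sigma$ (a permutation $\sigma\in\mathfrak{S}_n$ composed with sign flips on a subset $I$), applying the dot action, and then ruling out nontrivial $\sigma$ and $I$ separately using the inequality $k_j+k'_j\geq 2n$; you instead immediately repackage the orbit condition as equality of the multisets $\{|k_i-i|\}$ and $\{|k'_i-i|\}$ and run a clean induction on the largest remaining element. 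The two arguments are equivalent in content, but the multiset formulation is a bit tighter and makes the role of the hypothesis transparent at each step; the explicit $n=2$ example $\k=(3,1)$, $\k'=(3,3)$ showing the necessity of strictness in $k_n+k'_n>2n$ is a nice addition that the paper does not include. One small thing worth spelling out in the induction step: after observing $a_i=a'_i$ (via $a_i\geq -a_n$), one should note that $a_i\geq -a_n$ together with strict monotonicity forces $a_i\geq 0$, so that $|a_i|=a_i$ and the matching of maxima indeed yields $a_i=a'_i$ rather than merely $|a_i|=a'_i$ — this is implicit in your write-up and easy to justify, but is the one place a careless reader could stumble.
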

\begin{proof}
Suppose that $\chi_\k = \chi_{\k'}$. Then $\k' = w \cdot \k$ for a Weyl group element $w$, where $\cdot$ denotes the dot action of the Weyl group (see Sections 1.8 to 1.10 of \cite{Humphreys2008}). The element $w$ decomposes as $w = \tau_I \sigma$ with $\sigma \in \mathfrak S_n$, the symmetric group for $\{1, \ldots, n\}$, and $I$ a subset of $\{1, \ldots, n\}$. The action $\tau_I \sigma \cdot \k$ is given by
$$
 \tau_I \sigma \cdot \k = \big(\epsilon_1 (k_{\sigma(1)}-\sigma(1)), \ldots, \epsilon_n (k_{\sigma(n)}-\sigma(n))\big) + (1, \ldots, n),
$$
where $\epsilon_i = 1$  if  $i \notin I$ and $\epsilon_i =-1$  if  $i \in I$. (Note that the dot action is given by $w\cdot\k=w(\k+\varrho)-\varrho$, where $\varrho$ is half the sum of the positive roots. In order to be in the setting of \cite{Humphreys2008}, the positive roots have to be $e_i-e_j$ for $1\leq i<j\leq n$ and $-(e_i+e_j)$ for $1\leq i\leq j\leq n$, resulting in $\varrho=-(1,\ldots,n)$). Hence
\begin{equation}\label{near-holo-arch-lemmaeq1}
 \epsilon_j (k_{\sigma(j)}-\sigma(j))=k'_j-j\qquad\text{for }j\in\{1,\ldots,n\}.
\end{equation}
Let $S=\{j\in\{1,\ldots,n\}\ |\ \sigma(j)\neq j\}$. Then $\sigma$ induces a permutation of $S$ without fixed points. The condition \eqref{near-holo-arch-lemmaeq1}, together with $k_j+k'_j\geq 2n$, forces $\epsilon_j=1$ for all $j\in S$. Hence $x_{\sigma(j)}=x'_j$ for $j\in S$, where $x_i:=k_i-i$ and $x'_i=k'_i-i$. Since the $x_i$ and the $x'_i$ are strictly ordered from largest to smallest, this is only possible if $S=\emptyset$.

We proved that $\sigma$ is the identity, so that $\epsilon_j(k_j-j)=k'_j-j$ for all $j\in\{1,\ldots,n\}$. If $\epsilon_j=1$, then $k_j=k'_j$. If $\epsilon_j=-1$, then our hypothesis implies $j=n$ and $k_j=k'_j=n$. This proves $\k=\k'$.
\end{proof}

For $1 \leq i \leq n$, let $D_{2i}$ be the generators of $\mathcal Z$ from Theorem \ref{t:Mau12}. Note that $\mathcal{Z}$, and hence each $D_{2i}$, acts on the space $\AA(\Gamma; \rho)^\circ_{\p_-\text{-fin}}$. By the isomorphism $N_{\rho}(\Gamma) \isomto \AA(\Gamma; \rho)_{\pp}$ of Proposition~\ref{p:iso}, it follows that $D_{2i}$ gives rise to an operator $\Omega_{2i}$ on the space  $N_\rho(\Gamma)$. Precisely, for each $f \in N_\rho(\Gamma)$,
\[
 (\Omega_{2i} f)^\rho = D_{2i}(f^\rho), \quad L_\rho((\Omega_{2i} f)^\rho) = D_{2i}(L_\rho(f^\rho)).
\]
More concretely, for $z=x+iy \in \H_n$ and $f \in N_\rho(\Gamma)$, the above definition is equivalent to \begin{equation}\label{e:opertorsdef}
 (\Omega_{2i} f)(z) = \rho(y^{-1/2})\left((D_{2i}f^\rho)\mat{y^{1/2}}{xy^{-1/2}}{}{y^{-1/2}}\right).
\end{equation}

For each character $\chi$ of $\mathcal{Z}$, we define $\AA(\Gamma; \rho, \chi)^\circ_{\p_-\text{-fin}}\subseteq \AA(\Gamma; \rho)_{\p_-\text{-fin}}^\circ$ to be the subspace consisting of all the elements on which $\mathcal{Z}$ acts via the character $\chi$. We let $N_\rho(\Gamma; \chi)^\circ$ be the corresponding subspace of $N_\rho(\Gamma)^\circ$. In the notation introduced in Sect.~\ref{s:ident}, it is clear that
\begin{equation}\label{e:decom3}
 \AA(\Gamma; \rho,  L(\k))^\circ\subseteq \AA(\Gamma; \rho, \chi_\k)_{\pp}^\circ,\qquad N_\rho(\Gamma; L(\k))^\circ \subseteq N_\rho(\Gamma; \chi_\k)^\circ.
\end{equation}
More precisely,
\begin{equation}\label{e:decom4}
 \bigoplus_{\substack{\k'\in\Lambda^{++}\\\chi_{\k'}=\chi_\k}}\AA(\Gamma; \rho, L(\k'))^\circ=\AA(\Gamma; \rho, \chi_\k)_{\pp}^\circ,\qquad\bigoplus_{\substack{\k'\in\Lambda^{++}\\\chi_{\k'}=\chi_\k}} N_\rho(\Gamma; L(\k'))^\circ=N_\rho(\Gamma; \chi_\k)^\circ.
\end{equation}
It follows from Lemma \ref{near-holo-arch-lemma} that
\begin{equation}\label{e:decom5}
 N_\rho(\Gamma; L(\k))^\circ=N_\rho(\Gamma; \chi_\k)^\circ \qquad\text{if }\k=(k_1,\ldots,k_n)\in\Lambda^{++}\text{ with }k_n\geq2n,
\end{equation}
and similarly for $\AA(\Gamma; \rho, \chi_\k)_{\pp}^\circ$.

\begin{lemma}\label{l:integral}
Let $\k \in \Lambda^+$. Then for all $1 \leq i \leq n$, $\chi_\k(D_{2i}) \in \Z$.
\end{lemma}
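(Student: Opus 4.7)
The plan is to compute $\chi_\k(D_{2i})$ explicitly by applying $D_{2i}$ to a highest weight vector of $L(\k)$ and reading off the eigenvalue as the evaluation at $\k$ of a polynomial with integer coefficients. Since $L(\k)$ is irreducible and $D_{2i}$ is central, $D_{2i}$ acts on $L(\k)$ as the scalar $\chi_\k(D_{2i})$, so it suffices to compute on any nonzero vector. Let $v \in L(\k)$ be the image of the highest weight generator of $F_\k$; by construction $v$ is annihilated by $\mathfrak{k}_+$ and by $\p_-$, and satisfies $B_{i,i}v = k_i v$.

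The key Lie-algebraic input is that $\mathfrak{n}^+ := \mathfrak{k}_+ + \p_-$ and $\mathfrak{n}^- := \mathfrak{k}_- + \p_+$ are both subalgebras of $\g_\C$ (using $[\mathfrak{k}_\C, \p_\pm] = \p_\pm$ and $[\p_\pm, \p_\pm] = 0$), yielding a triangular decomposition $\g_\C = \mathfrak{n}^- \oplus \mathfrak{h}_\C \oplus \mathfrak{n}^+$. By the PBW theorem, $\U(\g_\C) = \U(\mathfrak{n}^-) \cdot \U(\mathfrak{h}_\C) \cdot \U(\mathfrak{n}^+)$. By Theorem \ref{t:Mau12}, $D_{2i}$ is already a $\Z$-linear combination of products of basis elements $B_{*,*}$ and $E_{\pm,*,*}$ of $\g_\C$; since every commutator relation used to interchange these generators (listed in the proof of Corollary \ref{Center-gener-cor}) has integer structure constants, an inductive bubble-sort reordering into PBW form produces
$$
D_{2i} = \sum_\alpha c_\alpha\, u_\alpha h_\alpha v_\alpha, \qquad c_\alpha \in \Z,
$$
where $u_\alpha$ is a PBW monomial in $\mathfrak{n}^-$, $h_\alpha$ is a monomial in $B_{1,1}, \ldots, B_{n,n}$, and $v_\alpha$ is a PBW monomial in $\mathfrak{n}^+$.

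Now apply $D_{2i}$ to $v$. Any term with $v_\alpha \neq 1$ vanishes, since the rightmost factor of $v_\alpha$ lies in $\mathfrak{n}^+$ and annihilates $v$; for the surviving terms $h_\alpha v = p_\alpha(\k)\,v$ with $p_\alpha \in \Z[t_1, \ldots, t_n]$. Hence
$$
D_{2i}v = \sum_{v_\alpha = 1} c_\alpha\, p_\alpha(\k)\, u_\alpha v.
$$
The vector $u_\alpha v$ lies in the weight space of weight $\k + w(u_\alpha)$, where $w(u_\alpha)$ is the sum of the roots of the factors of $u_\alpha$ (these lie in $\mathfrak{k}_- \cup \p_+$). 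Roots in $\p_+$ have component-sum equal to $2$, and roots in $\mathfrak{k}_-$ have component-sum $0$, so $w(u_\alpha) = 0$ forces $u_\alpha$ to be a monomial purely in $\mathfrak{k}_-$; a short argument using the smallest index $b^\ast = \min_k b_k$ appearing in the factors $B_{a_k,b_k}$ ($a_k > b_k$) then shows that $e_{b^\ast}$ has strictly negative coefficient in $w(u_\alpha)$ unless $u_\alpha = 1$. Matching weight-$\k$ components of $\chi_\k(D_{2i})\,v = D_{2i}v$ therefore gives
$$
\chi_\k(D_{2i}) = \sum_{u_\alpha = v_\alpha = 1} c_\alpha\, p_\alpha(\k) \in \Z,
$$
since $\k \in \Z^n$.

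The only mild obstacles are tracking the integrality of coefficients through the PBW reordering (guaranteed by the integrality of the bracket relations) and the weight-counting step ruling out $u_\alpha \neq 1$; both are elementary.
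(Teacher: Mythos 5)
Your proof is correct and rests on the same essential ingredients as the paper's: Maurischat's Theorem~\ref{t:Mau12} (integer coefficients for $D_{2i}$ in the $B/E_\pm$ generators), reordering using the integer structure constants, killing the generators that annihilate the highest weight vector, and then a weight count to eliminate whatever is left beyond the Cartan part. The routes differ somewhat. The paper uses its own Corollary~\ref{Center-gener-cor}, which brings $D_{2i}$ into the order (Cartan)$\cdot$(positive compact)$\cdot$(negative compact)$\cdot E_+\cdot E_-$ and then applies this to $v_\k$. You instead invoke the PBW theorem for the triangular decomposition $\g_\C=\mathfrak n^-\oplus\mathfrak h_\C\oplus\mathfrak n^+$ with $\mathfrak n^+=\mathfrak k_++\p_-$, which is precisely the nilradical (for the ``non-standard'' positive system $\{e_i-e_j:i<j\}\cup\{-(e_i+e_j):i\le j\}$ that the paper itself uses in the proof of Lemma~\ref{near-holo-arch-lemma}) annihilating the highest weight vector of $L(\k)$. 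Killing $\mathfrak n^+$ is therefore completely automatic; you then rule out nontrivial $\mathfrak n^-$ monomials of weight $0$ with the component-sum argument for the $\p_+$ roots and the $b^\ast$ argument for the $\mathfrak k_-$ roots, both correct. This packaging is cleaner and more transparent than the paper's. In fact it also skirts a small slip in the paper's write-up: the paper asserts that $v_\k$ is annihilated by $B_{a,b}$ with $a>b$, but a highest weight vector is annihilated by $B_{a,b}$ with $a<b$ (positive compact root vectors); your choice of $\mathfrak n^+=\mathfrak k_++\p_-$ as the annihilating algebra, with the weight argument absorbing everything in $\mathfrak n^-$ (both $\mathfrak k_-$ and $\p_+$), sidesteps this issue entirely. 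One minor point to keep straight is that in the final identification $D_{2i}v=\chi_\k(D_{2i})v$ you are implicitly projecting onto the weight-$\k$ subspace, and the weight-$\k$ subspace of $L(\k)$ need not be one-dimensional; your argument is fine because you have shown that \emph{none} of the surviving $u_\alpha\neq1$ monomials land in weight $\k$, so the weight-$\k$ component of $D_{2i}v$ is exactly the pure-Cartan contribution.
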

\begin{proof}
Since $D_{2i} v = \chi_\k(D_{2i}) v$ for any $v$ in the space of $L(\k)$, it is enough to prove that $D_{2i}v_\k$ is an \emph{integral} multiple of $v_\k$, where we fix $v_\k$ to be a highest weight vector of weight $\k$ in $L(\k)$. For this we appeal to Corollary \ref{Center-gener-cor}. We know that $v_\k$ is annihilated by all the $E_{-,*,*}$ elements, as well as all the $B_{a,b}$ elements for $a>b$, and the number of $E_{-,*,*}$ elements is the same as the number of $E_{+,*,*}$ elements in the expression of $D_{2i}$. Hence, we may write $D_{2i}v_\k =  \sum_{i=1}^t c_i H^{(i)}_{1}\cdots H^{(i)}_{r_i} X^{(i)}_1\cdots X^{(i)}_{p_i} v_\k$ where the $c_i$ are integers, $r_i, p_i \ge 0$ and each $H^{(i)}_k$ is equal to $B_{a,a}$ for some $a$, and each $X^{(i)}_k$ is equal to $B_{a,b}$ for some $a < b$. We break up the expression above into two parts corresponding to whether $p_i=0$ or $p_i>0$, and obtain $D_{2i}v_\k = w_1 + w_2$ where $w_1 = \sum_{i=1}^u c_i H^{(i)}_{1}\cdots H^{(i)}_{r_i} v_\k$ and $w_2 = \sum_{i=u+1}^t c_i H^{(i)}_{1}\cdots H^{(i)}_{m_i} X^{(i)}_1\cdots X^{(i)}_{n_i} v_\k$ with each $n_i>0$. Looking at weights, we see $w_2=0$. Since $B_{a,a} v_\k = k_a v_\k$, it follows that $w_1$ is an integral multiple of $v_\k$, completing the proof.
\end{proof}

\begin{lemma}\label{l:omegaarith}  Let $\sigma \in \Aut(\C)$. Then for any $f \in N_\rho(\Gamma)$
$${}^\sigma\!(\Omega_{2i} f) = \Omega_{2i}({}^\sigma\!f).$$
\end{lemma}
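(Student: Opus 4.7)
The plan is to realize $\Omega_{2i}$ as a special case of the operator $\widetilde{R}$ studied in Proposition \ref{p:master} and then invoke that proposition directly, with $\rho_1=\rho_2=\rho$.

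First, I would apply Corollary \ref{Center-gener-cor2} to write
\[
 D_{2i} = \sum_{j} c_j\, E^{(j)}_{+,1}\cdots E^{(j)}_{+,s_j}\, E^{(j)}_{-,1}\cdots E^{(j)}_{-,s_j}\, H^{(j)}_{1}\cdots H^{(j)}_{r_j}\, X^{(j)}_1\cdots X^{(j)}_{p_j}\, Y^{(j)}_1\cdots Y^{(j)}_{q_j},
\]
with $c_j\in\Z\subset\Q$. Since each of $H^{(j)}_k$, $X^{(j)}_k$, $Y^{(j)}_k$ is a $B_{a,b}$, this expression is of precisely the form of the element $R$ to which Proposition \ref{p:master} applies (after trivially reordering the $B$'s in any way compatible with that statement, which is allowed because the statement of \ref{p:master} only requires a sum of monomials of the form $E_+\cdots E_-\cdots B\cdots$).

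Second, I would observe that the defining formula \eqref{e:opertorsdef} of $\Omega_{2i}$,
\[
 (\Omega_{2i} f)(z) = \rho(y^{-1/2})\Big((D_{2i}f^\rho)\mat{y^{1/2}}{xy^{-1/2}}{}{y^{-1/2}}\Big),
\]
agrees verbatim with the definition \eqref{p:mastereq1} of $\widetilde{R}f$ in Proposition \ref{p:master} when $R=D_{2i}$ and $\rho_1=\rho_2=\rho$. The representation $\rho$ (assumed irreducible, in accordance with the conventions of this section) carries the fixed rational structure from Section \ref{s:scalarvector} and has a well-defined homogeneous degree; crucially $k(\rho_1)-k(\rho_2)=0$, so the $\pi$-power prefactor in \eqref{p:mastereq2} is $1$.

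Third, I would invoke \eqref{p:mastereq2}, which gives ${}^\sigma(\widetilde{R}f)=\widetilde{R}({}^\sigma f)$ as elements of $N_S(\H_n,V)$. Since $\Omega_{2i}f$ lies in the subspace $N_\rho(\Gamma)\subset N_S(\H_n,V)$ (because $D_{2i}$ preserves $\AA_\rho(\Gamma)_{\pp}$ via the isomorphism of Proposition \ref{p:iso}), and since the action of $\Aut(\C)$ on $N_\rho(\Gamma)$ defined by \eqref{e:autcnearholo} is simply the restriction of the action on $N_S(\H_n,V)$ defined in \eqref{e:fourierniceautc}, this immediately yields ${}^\sigma(\Omega_{2i}f)=\Omega_{2i}({}^\sigma f)$ in $N_\rho(\Gamma)$.

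There is essentially no obstacle here once Proposition \ref{p:master} is in hand; the only mild bookkeeping point is verifying that Corollary \ref{Center-gener-cor2} indeed delivers $D_{2i}$ in the required normal form with rational (in fact integer) coefficients, and that $\rho$ satisfies the hypotheses (irreducible, rational, preserving the fixed rational structure), both of which are built into the setting of the lemma.
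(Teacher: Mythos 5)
Correct, and this is exactly the paper's own argument: the paper's proof simply cites Corollary \ref{Center-gener-cor2}, Proposition \ref{p:master}, and \eqref{e:opertorsdef}, and your write-up is the expected unwinding of that citation, including the key observation that $\rho_1=\rho_2=\rho$ forces $k(\rho_1)-k(\rho_2)=0$ so the $\pi$-power prefactor in \eqref{p:mastereq2} disappears. (One small point: no reordering of the $B$'s is needed, since Corollary \ref{Center-gener-cor2} already delivers $D_{2i}$ with all $B$'s at the right as required by Proposition \ref{p:master}; your remark about reordering is harmless but superfluous.)
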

\begin{proof}
This follows from Corollary \ref{Center-gener-cor2}, Proposition \ref{p:master} and \eqref{e:opertorsdef}.
\end{proof}

For each $\k \in \Lambda^{++}$, let $\p_{\chi_\k}$ denote the orthogonal projection map from $N_{\rho}(\Gamma)^\circ$ to its subspace $N_{\rho}(\Gamma; \chi_\k)^\circ$. (We omit $\rho$ from the notation of $\p_{\chi_\k}$ for brevity.) If $\k = (k_1, \ldots, k_n) \in \Lambda^+$ such that $k_n \ge 2n$, then \eqref{e:decom5} implies that $\p_{\chi_\k}$ is precisely the orthogonal projection map from $N_{\rho}(\Gamma)^\circ$ to its subspace $N_{\rho}(\Gamma; L(\k))^\circ$.

\begin{proposition}\label{p:projarith}
 For any $\sigma \in \Aut(\C)$, $f \in N_{\rho}(\Gamma)^\circ$, and $\k\in \Lambda^{++}$, we have
 $$
  {}^\sigma(\p_{\chi_\k}(f)) = \p_{\chi_\k}(\,{}^\sigma\!f).
 $$
\end{proposition}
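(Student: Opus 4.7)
\textbf{Proof plan for Proposition \ref{p:projarith}.}

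The plan is to realize the projection $\p_{\chi_\k}$ as a polynomial with rational coefficients in the operators $\Omega_2, \Omega_4, \ldots, \Omega_{2n}$. Since each $\Omega_{2i}$ is $\Aut(\C)$-equivariant by Lemma \ref{l:omegaarith}, such a polynomial will automatically commute with the $\Aut(\C)$-action, yielding the result.

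First I would observe that $N_\rho(\Gamma)^\circ$ is finite-dimensional by Proposition \ref{l:fact2}, so by \eqref{e:decom} and \eqref{e:decom4} there are only finitely many characters $\chi_{\k_1}, \ldots, \chi_{\k_m}$ of $\mathcal{Z}$ that occur, and
\[
 N_\rho(\Gamma)^\circ = \bigoplus_{j=1}^m N_\rho(\Gamma; \chi_{\k_j})^\circ.
\]
Without loss of generality say $\chi_{\k_1} = \chi_\k$ (if $\chi_\k$ does not appear at all, the projection is zero and the claim is trivial). For each $j \neq 1$, the fact that $\chi_{\k_j} \neq \chi_\k$ as characters of $\mathcal{Z}$ means, since $\mathcal{Z}$ is generated by $D_2, \ldots, D_{2n}$ (Theorem \ref{t:Mau12}), that there is some index $i_j \in \{1, \ldots, n\}$ with $\chi_{\k_j}(D_{2i_j}) \neq \chi_\k(D_{2i_j})$.

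Next I would set
\[
 P \;=\; \prod_{j=2}^{m} \frac{\Omega_{2i_j} - \chi_{\k_j}(D_{2i_j})\,\mathrm{Id}}{\chi_\k(D_{2i_j}) - \chi_{\k_j}(D_{2i_j})},
\]
viewed as an operator on $N_\rho(\Gamma)^\circ$. On each summand $N_\rho(\Gamma; \chi_{\k_\ell})^\circ$ the operator $\Omega_{2i}$ acts as the scalar $\chi_{\k_\ell}(D_{2i})$, so $P$ acts as the identity on $N_\rho(\Gamma; \chi_\k)^\circ$ and as zero on $N_\rho(\Gamma; \chi_{\k_j})^\circ$ for $j \neq 1$. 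Hence $P = \p_{\chi_\k}$. By Lemma \ref{l:integral} the numbers $\chi_{\k_\ell}(D_{2i})$ all lie in $\Z$, so the coefficients of $P$ as a polynomial in $\Omega_2, \ldots, \Omega_{2n}$ lie in $\Q$.

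Finally, since $\sigma \in \Aut(\C)$ fixes $\Q$ pointwise and commutes with each $\Omega_{2i}$ by Lemma \ref{l:omegaarith}, the same is true of $P$; in particular ${}^\sigma(P(f)) = P({}^\sigma\!f)$ for every $f \in N_\rho(\Gamma)^\circ$, which is the claim. The only minor point to verify is that applying $\sigma$ really keeps us inside $N_\rho(\Gamma)^\circ$ (so that the finite decomposition above is stable under $\Aut(\C)$); this follows from \eqref{e:nearholoshimuraautc} together with the observation that Lemma \ref{l:omegaarith}, combined with $\chi_{\k_j}(D_{2i}) \in \Z$, shows each summand $N_\rho(\Gamma; \chi_{\k_j})^\circ$ is itself $\Aut(\C)$-stable. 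No step here is a serious obstacle; the essential work has already been done in Lemmas \ref{l:omegaarith} and \ref{l:integral}, and this argument simply packages the center-character decomposition as a rational polynomial projector.
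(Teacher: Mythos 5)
Your proof is correct and takes essentially the same approach as the paper: express $\p_{\chi_\k}$ as a $\Q$-rational polynomial in the operators $\Omega_{2i}$ and then apply Lemmas \ref{l:integral} and \ref{l:omegaarith}. The only difference is the exact form of the polynomial --- you choose one separating index $i_j$ per competing central character and form a Lagrange-style product, whereas the paper instead uses the factor $\sum_{i=1}^n {\rm sgn}\bigl(\chi_\k(D_{2i}) - \chi_{\k'}(D_{2i})\bigr)\bigl(\Omega_{2i} - \chi_{\k'}(D_{2i})\bigr)$ for each competing $\k'$, normalized by the constant $C_\k$, thereby avoiding any choice of index --- but both constructions succeed for exactly the same reason.
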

\begin{proof}
We can give an explicit formula for $\p_{\chi_\k}$ as follows,
\begin{equation}\label{exp-defn-proj}
\p_{\chi_\k} = C_\k^{-1} \prod\limits_{\substack{\k' \in \Lambda^{++}\\ \k'\neq\k}} \:\sum\limits_{i=1}^n {\rm sgn}\big(\chi_\k(D_{2i}) - \chi_{\k'}(D_{2i})\big) \big(\Omega_{2i} - \chi_{\k'}(D_{2i})\big),
\end{equation}
where
\begin{equation}\label{exp-defn-proj2}
 C_\k = \prod\limits_{\substack{\k' \in \Lambda^{++}\\ \k'\neq\k}} \:\sum\limits_{i=1}^n |\chi_\k(D_{2i}) - \chi_{\k'}(D_{2i})|.
\end{equation}
In both \eqref{exp-defn-proj} and \eqref{exp-defn-proj2}, the product extends only over those finitely many $\k'$ for which the space $N_\rho(\Gamma;\chi_\k)^\circ$ is non-zero. Now the proposition follows from Lemmas \ref{l:integral} and \ref{l:omegaarith}.
\end{proof}

In the following we denote by $\mathfrak{k}_\Q$ the Lie algebra over $\Q$ spanned by the elements $B_{ij}$ defined in \eqref{BijEijdefeq}. Then $\mathfrak{k}_\C=\mathfrak{k}_\Q\otimes_\Q\C$.

\begin{lemma}\label{rhokrationallemma}
 For $\mathbf{k}\in\Lambda^+$, let $F_{\mathbf{k}}$ be a model for $\rho_{\mathbf{k}}$, and let $v_{\mathbf{k}}$ be a highest weight vector in $F_{\mathbf{k}}$. Let $F_{\mathbf{k},\Q}:=\U(\mathfrak{k}_\Q)v_{\mathbf{k}}$. Then $F_{\mathbf{k},\Q}$ is an irreducible $\mathfrak{k}_\Q$-module, and $F_{\mathbf{k},\Q}\otimes_\Q\C\cong F_{\mathbf{k}}$ as $\mathfrak{k}_\C$-modules.
\end{lemma}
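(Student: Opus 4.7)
The plan is to transport the problem into the rational structure $V_\Q \subseteq F_{\mathbf{k}}$ that is afforded by the hypothesis that $\rho_{\mathbf{k}}$ is a rational representation of $\GL_n(\C)$ (as stipulated in Section \ref{s:scalarvector}). First, I would note that under the identification $K_\infty \cong U(n)$ given by $\mat{A}{B}{-B}{A}\mapsto A+iB$, the basis element $B_{i,j}$ of $\mathfrak{k}_\C$ corresponds to the matrix unit $e_{i,j}\in\mathfrak{gl}_n(\C)$. Thus $\mathfrak{k}_\Q$ is identified with $\mathfrak{gl}_n(\Q)$, and the rationality of $\rho_{\mathbf{k}}$ on $\GL_n(\Q)$ implies (by differentiating) that $\mathfrak{k}_\Q$, and hence $\U(\mathfrak{k}_\Q)$, preserves $V_\Q$.

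Next, I would produce a highest weight vector inside $V_\Q$. The subspace
\[
 V_\Q^{\mathrm{hw}}=\{v\in V_\Q\,|\,B_{i,j}v=0\text{ for }i<j,\ B_{i,i}v=k_iv\text{ for all }i\}
\]
is cut out by $\Q$-rational conditions (since the $B_{i,j}$ preserve $V_\Q$ and the eigenvalues $k_i$ are integers), so its complexification is the $1$-dimensional space of highest weight vectors in $F_{\mathbf{k}}$; hence $V_\Q^{\mathrm{hw}}$ is $\Q$-one-dimensional and nonzero. Picking any generator $v'\in V_\Q^{\mathrm{hw}}$, we get $v_{\mathbf{k}}=cv'$ for some $c\in\C^\times$, and the $\Q$-linear bijection $w\mapsto cw$ is an isomorphism of $\mathfrak{k}_\Q$-modules $\U(\mathfrak{k}_\Q)v'\isomto F_{\mathbf{k},\Q}$. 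So it suffices to prove the two claims for the submodule $F^0:=\U(\mathfrak{k}_\Q)v'\subseteq V_\Q$.

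For the isomorphism claim, I would use the PBW identification $\U(\mathfrak{k}_\C)=\U(\mathfrak{k}_\Q)\otimes_\Q\C$ to see that the natural $\mathfrak{k}_\C$-equivariant map $\varphi:F^0\otimes_\Q\C\to F_{\mathbf{k}}$, $w\otimes z\mapsto zw$, has image containing $v'$ and stable under $\mathfrak{k}_\C$, hence equal to $F_{\mathbf{k}}$ by irreducibility. Injectivity of $\varphi$ follows because $F^0\subseteq V_\Q$ and flatness gives an injection $F^0\otimes_\Q\C\hookrightarrow V_\Q\otimes_\Q\C=F_{\mathbf{k}}$. For the irreducibility claim, any nonzero $\mathfrak{k}_\Q$-submodule $W\subseteq F^0$ yields a nonzero $\mathfrak{k}_\C$-submodule $W\otimes_\Q\C$ of $F^0\otimes_\Q\C\cong F_{\mathbf{k}}$, which by irreducibility of $F_{\mathbf{k}}$ must equal all of $F_{\mathbf{k}}$, and comparing $\Q$-dimensions forces $W=F^0$. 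The only genuinely nontrivial point in this argument is the initial identification of $\mathfrak{k}_\Q$ with $\mathfrak{gl}_n(\Q)$ and the consequent compatibility of $\mathfrak{k}_\Q$ with the rational structure on $V$; once that is in place, everything else is a routine extension-of-scalars argument.
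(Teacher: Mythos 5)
Your proof is correct, but it follows a genuinely different route from the one in the paper. The paper works directly with the $\mathfrak{k}_\Q$-module $F_{\mathbf{k},\Q}=\U(\mathfrak{k}_\Q)v_\k$, making no use of a pre-existing rational structure on $F_\k$: it applies Weyl's complete reducibility theorem to decompose $F_{\mathbf{k},\Q}$ into irreducibles $U_1\oplus\cdots\oplus U_m$, observes that $v_\k$ must project nontrivially into each $U_i$ with the projections being (by uniqueness of the weight-$\k$ line) scalar multiples $c_iv_\k$, and then uses the relation $v_i = X_i v_\k$ with $X_i\in\U(\mathfrak{k}_\Q)$ plus a weight comparison to force $c_i\in\Q$, whence $\Q$-linear independence of the $v_i$ forces $m=1$. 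Your argument instead imports the rational structure $V_\Q$ from the ambient rational representation $\rho_\k$ of $\GL_n$, identifies $\mathfrak{k}_\Q\cong\mathfrak{gl}_n(\Q)$ under $B_{i,j}\mapsto e_{i,j}$ (which is indeed how the paper's basis behaves under $\mat{A}{B}{-B}{A}\mapsto A+iB$), shows $\U(\mathfrak{k}_\Q)$ preserves $V_\Q$, produces a rational highest weight vector by a standard descent argument on linear equations, reduces to $v_\k\in V_\Q$ by scaling, and then gets both the isomorphism and irreducibility by routine flatness/dimension counting. What each buys: the paper's argument is self-contained and does not presuppose that a $\Q$-form of $F_\k$ has been fixed, at the cost of invoking Weyl's theorem and a slightly delicate weight argument; your argument is shorter and more transparently ``extension of scalars'' once $V_\Q$ is in hand, and as a bonus it makes the finite-dimensionality of $F_{\mathbf{k},\Q}$ over $\Q$ immediate (it sits inside $V_\Q$) — a point the paper's version implicitly needs for Weyl's theorem but does not pause to justify. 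Both proofs are valid; yours does implicitly rely on the fact that $\rho_\k$ is defined over $\Q$, which is true and indeed fixed earlier in the paper, so this is not a gap.
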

\begin{proof}
By Weyl's theorem \cite[Theorem 7.8.11]{Weibel94}, $F_{\mathbf{k},\Q}$ is a direct sum $U_1\oplus\ldots\oplus U_m$ of irreducible $\mathfrak{k}_\Q$-modules. Evidently, $v_{\mathbf{k}}$ must have a non-zero component $v_i$ in each of the $U_i$. Then each $v_i$ has weight $\mathbf{k}$. Since, up to multiples, $v_{\mathbf{k}}$ is the only vector of weight $\mathbf{k}$ in $F_{\mathbf{k}}$, there exist $c_i\in\C$ with $v_i=c_iv_{\mathbf{k}}$. We can get from $v_{\mathbf{k}}$ to any other vector in $F_{\mathbf{k},\Q}$ with an appropriate element of $\U(\mathfrak{k}_\Q)$. In particular, there exists $X_i\in\U(\mathfrak{k}_\Q)$ with $X_iv_{\mathbf{k}}=v_i$. Looking at weights, we see that $X_i$ must be a constant, so that in fact $X_i=c_i$. It follows that all the $c_i$ are rational. On the other hand, the $v_i$ are $\Q$-linearly independent. This is only possible if $m=1$.

We proved the irreducibility assertion. It follows that $F_{\mathbf{k},\Q}\otimes_\Q\C$ is an irreducible $\mathfrak{k}_\C$-module. The obvious map from $F_{\mathbf{k},\Q}\otimes_\Q\C$ to $F_{\mathbf{k}}$ is then an isomorphism.
\end{proof}

Let $\p_{\pm,\Q}$ be the $\Q$-span of the elements $E_{\pm,i,j}$ defined in \eqref{BijEijdefeq}; then $\p_{\pm,\Q}$ is a Lie algebra over $\Q$. Set $\mathfrak{g}_\Q=\p_{+,\Q}\oplus\mathfrak{k}_\Q\oplus\p_{-,\Q}$. Then $\mathfrak{g}_\Q$ is a rational form of $\mathfrak{g}_\C$.

\begin{lemma}\label{Nkrationallemma}
 For $\mathbf{k}\in\Lambda^+$, recall the $\g_\C$-module $N(\k)=\U(\g_\C) \otimes_{\U(\mathfrak{k}_\C + \p_{-})} F_\k$. Consider the $\g_\Q$-module $N(\k)_\Q:=\U(\g_\Q)\otimes_{\U(\mathfrak{k}_\Q+ \p_{-,\Q})} F_{\k,\Q}$. Then $N(\k)\cong N(\k)_\Q\otimes_\Q\C$ as $\g_\C$-modules. There exists a direct sum decomposition of $N(\k)_\Q$ into irreducibles $U_j$ such that the $U_j\otimes_\Q\C$ are the $K_\infty$-types of $N(\k)$.
\end{lemma}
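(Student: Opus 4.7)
The plan is to first establish the isomorphism $N(\k)\cong N(\k)_\Q\otimes_\Q\C$ via the PBW theorem, and then to decompose $N(\k)_\Q$ using the semisimplicity of $\mathfrak{k}_\Q$-modules combined with the split structure of $\mathfrak{k}_\Q\cong\mathfrak{gl}_n(\Q)$.

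First, by the PBW theorem applied to the triangular decomposition $\g_\C=\p_+\oplus(\mathfrak{k}_\C+\p_-)$, multiplication induces an isomorphism of left $\U(\mathfrak{k}_\C+\p_-)$-modules $\U(\g_\C)\cong\U(\p_+)\otimes_\C\U(\mathfrak{k}_\C+\p_-)$, so that $N(\k)\cong\U(\p_+)\otimes_\C F_\k$ as vector spaces (in fact as $\U(\p_+)$-modules). The analogous PBW decomposition over $\Q$ gives $N(\k)_\Q\cong\U(\p_{+,\Q})\otimes_\Q F_{\k,\Q}$. There is an obvious $\g_\Q$-equivariant map $N(\k)_\Q\to N(\k)$ sending $X\otimes v$ to $X\otimes v$, whose $\C$-linear extension yields a $\g_\C$-homomorphism $\Phi:N(\k)_\Q\otimes_\Q\C\to N(\k)$. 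Using $\U(\p_{+,\Q})\otimes_\Q\C=\U(\p_+)$ together with the isomorphism $F_{\k,\Q}\otimes_\Q\C\cong F_\k$ supplied by Lemma \ref{rhokrationallemma}, the map $\Phi$ is identified with the canonical isomorphism $\U(\p_+)\otimes_\C F_\k\cong N(\k)$. This proves the first assertion.

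Next I would decompose $N(\k)_\Q$ as a $\mathfrak{k}_\Q$-module. The $\mathfrak{k}_\C$-module $N(\k)$ is locally finite (each vector lies in a finite-dimensional $K_\infty$-type), so the same is true for the $\mathfrak{k}_\Q$-submodule $N(\k)_\Q$: for any $v\in N(\k)_\Q$, the subspace $\U(\mathfrak{k}_\Q)v$ sits inside the finite-dimensional $\C$-vector space $\U(\mathfrak{k}_\C)v$ and hence is itself finite-dimensional. Since $\mathfrak{k}_\Q$ is reductive (it is identified with $\mathfrak{gl}_n(\Q)$ via $B_{i,j}\mapsto e_{i,j}$), Weyl's theorem on complete reducibility applies to each such finite-dimensional submodule, and an application of Zorn's lemma yields a decomposition
\[
 N(\k)_\Q=\bigoplus_j U_j,
\]
with each $U_j$ a finite-dimensional irreducible $\mathfrak{k}_\Q$-module.

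Finally I would verify that each $U_j\otimes_\Q\C$ is an irreducible $\mathfrak{k}_\C$-module, so that these are precisely the $K_\infty$-types of $N(\k)$. Since the Cartan $\h_\Q=\langle B_{1,1},\ldots,B_{n,n}\rangle$ acts on $N(\k)_\Q$ with the integer weights occurring in $N(\k)$, each $U_j$ is a weight module and contains a highest weight vector $v_j$ of some weight $\mathbf{m}_j\in\Lambda^+$. Applying the argument of Lemma \ref{rhokrationallemma} to $U_j=\U(\mathfrak{k}_\Q)v_j$ shows that $U_j\otimes_\Q\C$ is an irreducible $\mathfrak{k}_\C$-module of highest weight $\mathbf{m}_j$, namely $F_{\mathbf{m}_j}$. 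Combined with step~1, this yields $N(\k)=\bigoplus_j(U_j\otimes_\Q\C)$ as a $\mathfrak{k}_\C$-module, which is the $K_\infty$-type decomposition, completing the proof.

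The only subtle point is the irreducibility of $U_j\otimes_\Q\C$, and this is precisely where the splitness of $\mathfrak{k}_\Q$ (so that the highest weight theory descends to $\Q$) is used; it is resolved by quoting the previous lemma, whose proof applies verbatim to any finite-dimensional irreducible $\mathfrak{k}_\Q$-module with integral dominant highest weight.
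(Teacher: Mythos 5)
Your argument is correct and follows the same overall strategy as the paper: establish the $\Q$-rational form via PBW together with Lemma~\ref{rhokrationallemma}, then split $N(\k)_\Q$ into $\mathfrak{k}_\Q$-irreducibles and check that these complexify to the $K_\infty$-types. The only organizational difference is that the paper decomposes degree-by-degree — it observes $[\mathfrak{k}_\Q,\p_{+,\Q}]=\p_{+,\Q}$, applies Weyl's theorem to each finite-dimensional $\U(\p_{+,\Q})^{(m)}\otimes_\Q F_{\k,\Q}$, and sums over $m$ — whereas you invoke local finiteness plus Zorn's lemma to split $N(\k)_\Q$ in one go; both are fine, and worth noting in either case is that $\mathfrak{k}_\Q\cong\mathfrak{gl}_n(\Q)$ is only reductive, so the appeal to ``Weyl's theorem'' implicitly uses that the centre acts semisimply, which holds here because $N(\k)_\Q$ is a weight module for $\mathfrak{h}_\Q$.

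One small imprecision: you justify the irreducibility of $U_j\otimes_\Q\C$ by saying the proof of Lemma~\ref{rhokrationallemma} ``applies verbatim.'' That proof actually leans on a structural feature not available here, namely that $v_\k$ is the \emph{unique} vector of weight $\k$ in the ambient complex irreducible $F_\k$; for a general $U_j$ you do not know a priori that its highest weight space in $U_j\otimes_\Q\C$ is one-dimensional. The cleaner justification is that $U_j\otimes_\Q\C=\U(\mathfrak{k}_\C)(v_j\otimes1)$ is a finite-dimensional cyclic highest weight module over the split reductive $\mathfrak{k}_\C$, hence automatically irreducible (a finite-dimensional quotient of a Verma module with dominant integral highest weight is the simple quotient). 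This fills the gap without circularity and also makes explicit the assertion of irreducibility that the paper itself leaves unjustified.
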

\begin{proof}
Inside $\U(\p_+)$, let $\U(\p_+)^{(m)}$ be the $\C$-linear span of the elements $E_{+,i_1,j_1}\ldots E_{+,i_m,j_m}$, and let $\U(\p_+)^{(m)}_\Q$ be the $\Q$-linear span of the same elements. Using the PBW theorem, we have
\begin{equation}\label{Nkrationallemmaeq1}
 N(\k)=\U(\g_\C) \otimes_{\U(\mathfrak{k}_\C + \p_{-})} F_\k\cong\U(\p_+)\otimes_\C F_\k\cong\bigoplus_{m=0}^\infty\Big(\U(\p_+)^{(m)}\otimes_\C F_\k\Big)
\end{equation}
as complex vector spaces. It follows from the PBW theorem and Lemma \ref{rhokrationallemma} that
\begin{equation}\label{Nkrationallemmaeq2}
 \U(\p_+)^{(m)}\otimes_\C F_\k\cong(\U(\p_{+,\Q})^{(m)}\otimes_\Q F_{\k,\Q})\otimes_\Q\C
\end{equation}
(both sides have the same $\C$-dimension). Hence
\begin{align}\label{Nkrationallemmaeq3}
 N(\k)&\cong\bigg(\bigoplus_{m=0}^\infty\Big(\U(\p_{+,\Q})^{(m)}\otimes_\Q F_{\k,\Q}\Big)\bigg)\otimes_\Q\C\nonumber\\
 &\cong\Big(\U(\p_{+,\Q})\otimes_\Q F_{\k,\Q}\Big)\otimes_\Q\C\nonumber\\
 &\cong\Big(\U(\g_\Q)\otimes_{\U(\mathfrak{k}_\Q+ \p_{-,\Q})} F_{\k,\Q}\Big)\otimes_\Q\C.
\end{align}
This proves $N(\k)\cong N(\k)_\Q\otimes_\Q\C$ as $\C$-vector spaces. It is easy to see that the isomorphism is compatible with the action of $\g_\C\cong\g_\Q\otimes_\Q\C$.

It follows from $[\mathfrak{k}_\Q,\p_{+,\Q}]=\p_{+,\Q}$ that $\U(\p_{+,\Q})^{(m)}\otimes_\Q F_{\k,\Q}$ is a $\mathfrak{k}_\Q$-module. By Weyl's theorem, we may decompose it into irreducibles $U_j$. Then the $U_j\otimes_\Q\C$ are irreducible under the action of $\mathfrak{k}_\C\cong\mathfrak{k}_\Q\otimes\C$, i.e., they are the $K_\infty$-types of $\U(\p_+)^{(m)}\otimes_\C F_\k$.
\end{proof}

\begin{lemma}\label{Lkrationallemma}
 For $\mathbf{k}\in\Lambda^+$, let $v_\k$ be a highest weight vector in the $K_\infty$-type $\rho_\k$ of $L(\k)$. Then for any $K_\infty$-type $\rho$ of $L(\k)$ there exists a non-negative integer $m$ such that $\rho$ admits a $\C$-basis consisting of vectors of the form $Y_iv_\k$, where $Y_i\in\U(\p_{+,\Q})^{(m)}\U(\mathfrak{k}_\Q)$.
\end{lemma}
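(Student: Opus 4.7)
The plan is to lift the problem to $N(\k)$, where the rational structure provided by Lemmas \ref{rhokrationallemma} and \ref{Nkrationallemma} is available, and then push back down via the canonical projection $\pi \colon N(\k) \to L(\k)$.

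First, I would associate to the $K_\infty$-type $\rho$ in $L(\k)$ (with highest weight $\k'=(k_1',\ldots,k_n')$) the integer $m := \tfrac12(|\k'|-|\k|)$, where $|\k| := k_1+\cdots+k_n$. This is a non-negative integer because $\k'-\k$ must be a non-negative integer combination of the positive non-compact roots $e_i+e_j$, each of coordinate sum $2$.

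Next I would show that the $\rho$-isotypic component of $L(\k)$ is the image under $\pi$ of the $\rho$-isotypic component of $\U(\p_+)^{(m)} F_\k \subset N(\k)$. This rests on a weight argument: all weights occurring in $F_\k$ have coordinate sum $|\k|$ (since the positive compact roots $e_i-e_j$ have coordinate sum zero), whereas each $E_{+,i,j}$ shifts this sum by $2$. Hence weights of coordinate sum $|\k'|$ in $N(\k)$ can appear only in the summand $\U(\p_+)^{(m)}F_\k$ of the PBW decomposition, and by the $\mathfrak{k}_\C$-equivariance and surjectivity of $\pi$, the $\rho$-isotypic component of $L(\k)$ is exactly the image of the corresponding $\rho$-isotypic component of $\U(\p_+)^{(m)}F_\k$.

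Finally, by Lemma \ref{Nkrationallemma} this $\rho$-isotypic component of $\U(\p_+)^{(m)}F_\k$ has a rational form, given as a direct sum of those $\mathfrak{k}_\Q$-irreducibles $U_j \subset \U(\p_{+,\Q})^{(m)}\otimes_\Q F_{\k,\Q}$ whose complexifications are isomorphic to $\rho$. By Lemma \ref{rhokrationallemma}, every element of $F_{\k,\Q}$ is of the form $Yv_\k$ for some $Y \in \U(\mathfrak{k}_\Q)$, and therefore any element of $\U(\p_{+,\Q})^{(m)}\otimes_\Q F_{\k,\Q}$, when transported into $N(\k)_\Q$, becomes $Zv_\k$ with $Z \in \U(\p_{+,\Q})^{(m)}\U(\mathfrak{k}_\Q)$. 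Applying $\pi$ produces a spanning set of the $\rho$-isotypic component of $L(\k)$ consisting of vectors of the required form $Y_iv_\k$, from which a $\C$-basis may be extracted.

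The main obstacle is the weight-tracking step: one must verify carefully that no piece of the $\rho$-isotypic component can slip into summands of the PBW decomposition of degree different from $m$, so that a rational spanning set is indeed captured by a single degree $m$. Once this is secured, the rational-structure results (Lemmas \ref{rhokrationallemma} and \ref{Nkrationallemma}) do the rest of the work.
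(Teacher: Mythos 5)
Your proposal is correct and follows essentially the same route as the paper's proof: lift to $N(\k)$, exploit the rational structure supplied by Lemmas \ref{rhokrationallemma} and \ref{Nkrationallemma}, and push down through the projection $N(\k)\to L(\k)$. You spell out more precisely than the paper why the degree $m$ is determined (the coordinate-sum weight argument) and why the correct normalization of $v_0\otimes 1$ lands on $v_\k$, but these are just details the paper's proof treats as implicit in the phrase ``by construction.''
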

\begin{proof}
Let $U_j\otimes_\Q\C$ be one of the $K_\infty$-types from Lemma \ref{Nkrationallemma} such that it maps onto $\rho$ under the projection $N(\k)\to L(\k)$. By construction there exists an $m$ such that $U_j$ has a basis consisting of vectors of the form $Y_iv_0$ with $Y_i\in\U(\p_{+,\Q})^{(m)}\U(\mathfrak{k}_\Q)$; here $v_0$ is a highest weight vector in $F_{\k,\Q}$. The vectors $Y_iv_0\otimes1=Y_i(v_0\otimes1)$ are a $\C$-basis of $U_j\otimes_\Q\C$. Our assertion follows since, after proper normalization, $v_0\otimes1$ projects to $v_\k$.
\end{proof}

For the rest of this subsection let ${\mathbf{k}} = (k_1, k_2, \ldots, k_n)$ where  $k_1\ge\ldots\ge k_n>n$ are integers with the \emph{same parity}. In this case, $L(\k)$ is a \emph{holomorphic discrete series representation} that contains the $K_\infty$-type\footnote{The $K_\infty$-type $\rho_{k_1, k_1, \ldots, k_1}$  corresponds to the character $\det(J(k_\infty, iI_n))^{-k_1}$ of $K_\infty$, or equivalently, the character $\det^{k_1}$ of $U(n)$.} $\rho_{k_1, k_1, \ldots, k_1}$ with multiplicity one (see Lemma 5.3 of \cite{PSS17}). We use the notation $N_{k_1}(\Gamma; L(\k))^\circ:=N_{\det^{k_1}}(\Gamma; L(\k))^\circ$.  We will construct a differential operator that maps $S_{\mathbf{k}}(\Gamma)$ isomorphically onto $N_{k_1}(\Gamma; L(\k))^\circ$. The idea is to find a $\Q$-rational element of $\mathcal{U}(\mathfrak{g}_{\C})$ that maps a highest-weight vector of weight $\k$ inside $L(\k)$ onto a vector in the one-dimensional, multiplicity one $K_\infty$-type $\rho_{k_1, k_1, \ldots, k_1}$ in $L(\k)$. This can be done thanks to Lemma \ref{Lkrationallemma}.

\begin{proposition}\label{p:uoper}
 Let ${\mathbf{k}} = (k_1, k_2, \ldots, k_n)$ where  $k_1\ge\ldots\ge k_n>n$ are integers with the same parity. Then there exists an injective linear map $\mathcal{D}_{\mathbf{k}}$ from $S_{\mathbf{k}}(\Gamma)$ to $N_{k_1}(\Gamma)^\circ$ with the following properties.
 \begin{enumerate}
  \item The image $\mathcal{D}_{\mathbf{k}}(S_{\mathbf{k}}(\Gamma))$ is equal to $N_{k_1}(\Gamma; L(\k))^\circ$.
  \item For any $\sigma \in \mathrm{Aut}(\C)$ and $f \in S_{\mathbf{k}}(\Gamma)$ we have $\mathcal{D}_{\mathbf{k}}(\,{}^\sigma\!f)= {}^\sigma(\mathcal{D}_{\mathbf{k}} f)$.
 \end{enumerate}
\end{proposition}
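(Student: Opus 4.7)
My plan is to produce a specific $\Q$-rational element $Y\in\U(\g_\C)$ that shifts a highest weight vector of $L(\k)$ into the (multiplicity-one) $K_\infty$-type $\det^{k_1}:=\rho_{(k_1,\ldots,k_1)}$, and to use $Y$ to build $\mathcal{D}_\k$. Applying Lemma~\ref{Lkrationallemma} to the one-dimensional $K_\infty$-type $\det^{k_1}$ of $L(\k)$ yields an element $Y\in\U(\p_{+,\Q})^{(m)}\U(\mathfrak{k}_\Q)$ (for some $m\geq0$) such that $Yv_\k$ spans the $\det^{k_1}$-isotypic subspace of $L(\k)$, where $v_\k$ is a highest weight vector of the lowest $K_\infty$-type $\rho_\k$ of $L(\k)$. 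Crucially, $Y$ is a $\Q$-linear combination of monomials of the form $E_{+,*,*}\cdots E_{+,*,*}B_{*,*}\cdots B_{*,*}$, precisely the shape required by Proposition~\ref{p:master}.

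For $f\in S_\k(\Gamma)$, set $F:=L_{\rho_\k}\circ f^{\rho_\k}$. By Lemma~\ref{l:Lrho}, $F$ is a scalar-valued cuspidal automorphic form in $\AA(\Gamma;\rho_\k)^\circ$, and \eqref{h-implies-p-ann} forces $F$ to be annihilated by $\p_-$. By Proposition~\ref{AA0nfindecomp2prop} any such $\p_-$-annihilated vector of weight $\k$ lies in the lowest $K_\infty$-type of a sum of copies of $L(\k)$, so $F\in\AA(\Gamma;\rho_\k,L(\k))^\circ$. Since right translation by $Y$ commutes with post-composition by the $\C$-linear map $L_{\rho_\k}$, one has $YF=L_{\rho_\k}\circ(Yf^{\rho_\k})$; furthermore $YF$ transforms under $K_\infty$ by $\det^{k_1}$ because $Yv_\k$ lies in the $\det^{k_1}$-type of $L(\k)$. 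Hence $YF\in\AA(\Gamma;\det^{k_1},L(\k))^\circ$, and via Proposition~\ref{p:iso}(2) corresponds to an element of $N_{k_1}(\Gamma;L(\k))^\circ$. I define $\mathcal{D}_\k f$ to be $\pi^{(k_1+\cdots+k_n-nk_1)/2}$ times this element.

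For property~(1): injectivity follows because $Yv_\k\neq 0$ in $L(\k)$ implies $Y$ induces an injection from the $\rho_\k$-highest-weight vectors in $\AA(\Gamma;L(\k))^\circ$ into the $\det^{k_1}$-vectors. Both spaces have dimension equal to the multiplicity $d$ of $L(\k)$ in $\AA(\Gamma)^\circ_\pp$ (by Proposition~\ref{AA0nfindecomp2prop}, using also that $\det^{k_1}$ occurs in $L(\k)$ with multiplicity one), so the map is an isomorphism and $\mathcal{D}_\k(S_\k(\Gamma))=N_{k_1}(\Gamma;L(\k))^\circ$.

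For property~(2), I apply Proposition~\ref{p:master} with $R=Y$, $\rho_1=\rho_\k$, and $\rho_2$ the rational representation of $\GL_n(\C)$ on the underlying space $V$ of $\rho_\k$ defined by $\rho_2(g)v:=\det(g)^{k_1}v$; both respect the given rational structure on $V$, have homogeneous degrees $k_1+\cdots+k_n$ and $nk_1$, and $\rho_1$ is irreducible. Proposition~\ref{p:master} then gives that $\pi^{(k_1+\cdots+k_n-nk_1)/2}\widetilde{Y}f\in N_S(\H_n,V)$ is $\Aut(\C)$-equivariant. A direct computation from the definitions (using \eqref{frho-denf}, the matrix $g_z=\mat{y^{1/2}}{xy^{-1/2}}{0}{y^{-1/2}}$, and $L_{\rho_\k}\circ(Yf^{\rho_\k})=YF$) identifies $L_{\rho_\k}\circ\widetilde{Y}f$ with the scalar-valued nearly holomorphic avatar of $YF$ under Proposition~\ref{p:iso}(2); hence $\mathcal{D}_\k f=\pi^{(k_1+\cdots+k_n-nk_1)/2}(L_{\rho_\k}\circ\widetilde{Y}f)$. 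The final ingredient is that $L_{\rho_\k}$ commutes with the $\Aut(\C)$-action on Fourier coefficients: $L_{\rho_\k}$ coincides with the $\C$-linear projection onto $\C v_\k$ along the direct sum of the remaining weight spaces of $V$ under $\mathfrak{h}_\Q$ (distinct weight spaces are orthogonal under any $U(n)$-invariant Hermitian form and the weight-space decomposition is $\Q$-rational), so $L_{\rho_\k}$ sends $V_\Q$ into $\Q$. Combining this with the equivariance of $\pi^{(k_1+\cdots+k_n-nk_1)/2}\widetilde{Y}f$ yields $\mathcal{D}_\k({}^\sigma\!f)={}^\sigma(\mathcal{D}_\k f)$. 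The main obstacle I anticipate is verifying cleanly that $L_{\rho_\k}\circ\widetilde{Y}f$ is the scalar nearly holomorphic form corresponding to $YF$, including correct bookkeeping of the $\pi$-powers; once this identification is in hand, the equivariance is an immediate consequence of Proposition~\ref{p:master} and the rationality of $L_{\rho_\k}$.
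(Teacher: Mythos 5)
Your proof follows the same route as the paper's: pick a $\Q$-rational $Y\in\U(\p_{+,\Q})^{(m)}\U(\mathfrak{k}_\Q)$ via Lemma~\ref{Lkrationallemma} mapping the highest-weight vector into the multiplicity-one $\det^{k_1}$-type, define $\mathcal{D}_\k f$ via $\pi^{(k_1+\cdots+k_n-nk_1)/2}\det(y)^{-k_1/2}(YF)(g_z)$, get surjectivity from the multiplicity-one $K_\infty$-types, and get $\Aut(\C)$-equivariance from Proposition~\ref{p:master} combined with rationality of $L_{\rho_\k}$. Your argument is correct and in fact a bit more careful than the paper's at two points the paper leaves implicit: you explicitly choose $\rho_2=\det^{k_1}\mathrm{Id}_V$ on the same underlying space $V$ so that Proposition~\ref{p:master} literally applies, and you justify that $L_{\rho_\k}$ is $\Q$-rational by identifying it with the projection onto the (rational, one-dimensional) highest weight space along the other weight spaces.
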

\begin{proof}
For brevity, we write $\rho = \rho_\k$. Let $f \in S_{\mathbf{k}}(\Gamma)$, and put $f' = L_\rho\circ f^\rho$. By Proposition \ref{AA0nfindecomp2prop}, $f'$ is a highest weight vector of weight $\k$  inside  $V = \U(\g_\C)(f')\simeq L(\k)$.

By Lemma \ref{Lkrationallemma}, there exists an element   $Y \in \U(\p_{+,\Q})^{(m)}\U(\mathfrak{k}_\Q)$ such that when $Y$ is viewed as an operator on $V$, then $Y v_\k$ is a non-zero vector in the one-dimensional $K_\infty$-type $\rho_2:=\rho_{k_1, k_1, \ldots, k_1}$.  Denote $\widetilde{Y} = \pi^{\frac{k_1 + \ldots + k_n - nk_1}2}  Y$ and define the map $\mathcal D_{\k}$ from $S_{\mathbf{k}}(\Gamma)$ to $C^\infty(\H_n)$ by
\begin{align}\label{e:finalarithmetic}
 (D_{\k}f)(z) &:= \det(y)^{-k_1/2} \widetilde{Y}(L_\rho(f^\rho))(g_z)\nonumber\\
  & = L_\rho\Big(\det(y)^{-k_1/2} (\widetilde{Y}(f^\rho))(g_z)\Big).
\end{align}
From the construction and the aforementioned property of $Y$, it is clear that  $(D_{\k}f)^{\rho_2} =   L_\rho (\widetilde{Y} f^\rho) = \widetilde{Y}(L_\rho f^\rho)$. This, together with the fact that $\rho_2$ is one-dimensional and occurs with multiplicity one in $L(\k)$ (and using Propositions \ref{p:iso} and \ref{AA0nfindecomp2prop}), it follows that $\mathcal D_{\k}$ maps $S_{\mathbf{k}}(\Gamma)$ surjectively onto $N_{k_1}(\Gamma; L(\k))^\circ$. This proves part 1. Using Proposition \ref{p:master} and the expression \eqref{e:finalarithmetic} (and the fact that the projection map $L_\rho$ is rational) we obtain part 2.
\end{proof}
\forget{We now prove the claim above. We have $\U(\g_\C) = \U(\p_+) \U(\mathfrak k_\C) \U(\p_-)$, and since $v_\k$ is annihilated by $\U(\p_-)$ (recall the remarks before Proposition \ref{l:multipli}), it follows that  $L(\k) = \U(\p_+) \U(\mathfrak k_\C)v_\k$.
For $n \geq 0$, set
$$\U(\p_+)^{(n)} := \{ \sum_{i \leq n} c_{\nu_1, \cdots \nu_i} X_{\nu_1} \cdots X_{\nu_i} : c_{\nu_1, \cdots \nu_i} \in \C\},$$
and $\U(\p_+)^{(n)}_\Q$ the subset where the coefficients $c_{\nu_1, \cdots \nu_i} \in \Q$. We have
$$L(\k)= \bigcup_{n=0}^\infty \U(\p_+)^{(n)} \U(\mathfrak k_\C)v_\k.$$
Let $n_0 \geq 0$ be such that the one dimensional $K_\infty$-type $\rho_{k_1, k_1, \ldots, k_1}$ is contained in the finite dimensional vector space $\U(\p_+)^{(n_0)} \U(\mathfrak k_\C)v_\k$.  Then $V' := \U(\p_+)^{(n_0)}_\Q \U(\mathfrak k_\Q)v_\k$ is a $\mathfrak k_\Q$ module.

Let ${\rm Rep}(\mathfrak k_\Q)$ be the category of representations of $\mathfrak k_\Q$ over finite dimensional $\Q$-vector spaces. We know that ${\rm Rep}(\mathfrak k_\C)$ is semisimple. By Proposition 5.16 of Milne's book \ameya{Do we have a good reference for this?} we conclude that ${\rm Rep}(\mathfrak k_\Q)$ is also semisimple. Hence, $V' = \oplus_{j=1}^m U_j$, where the $U_j$ are irreducible $\mathfrak k_\Q$ modules. Tensoring with $\C$, we get
$$\U(\p_+)^{(n_0)} \U(\mathfrak k_\C)v_\k = V' \otimes \C =  \oplus_{j=1}^m U_j \otimes \C.$$
We can see that $U_j \otimes \C$ is an irreducible $\mathfrak k_\C$ module as follows. Let $\{\alpha_1, \cdots, \alpha_r\}$ be a basis for $U_j$. Since $U_j$ is an irreducible $\mathfrak k_\Q$ module, we see that $\alpha_2, \cdots, \alpha_r \in \mathfrak k_\Q \alpha_1$. Hence, $U_j \otimes \C = \mathfrak k_\C \alpha_1$, and we get that it is an irreducible $\mathfrak k_\C$ module. This implies that $U_j \otimes \C$, for $1 \leq j \leq m$, are exactly the irreducible constituents of $\U(\p_+)^{(n_0)} \U(\mathfrak k_\C)v_\k$. Without loss of generality, let $U_1 \otimes \C = V_{k_1}$, the one-dimensional space for $\rho_{k_1, k_1, \ldots, k_1}$. Now we can take $\mathcal{D}$ to be an element of $\U(\p_+)^{(n_0)}_\Q \U(\mathfrak k_\Q)$ such that $\mathcal{D}v_\k \in V_{k_1}.$
}

\begin{remark}In the case $n=2$, we may take $\mathcal{D}_{\mathbf{k}} =  \pi^{-\frac{k_1 - k_2}2}U^{\frac{k_1 - k_2}2}$ using the notation of \cite{PSS14} (see Propositions 3.15 and 5.6 of \cite{PSS14}).
\end{remark}
\section{Special values of \texorpdfstring{$L$}{}-functions}

Throughout this section, we fix an $n$-tuple $\k=(k_1, k_2, \ldots, k_n)$ of positive integers such that $k_1\ge\ldots\ge k_n\ge n+1$ and all $k_i$ have the same parity. For brevity we write $k=k_1$.
\subsection{Automorphic representations}\label{s:autrep}
We recall that $L(\k)$ is the holomorphic discrete series representation of $\Sp_{2n}(\R)$ with highest weight $(k_1, k_2, \ldots, k_n)$. Let $\GSp_{2n}(\R)^+$ be the index two subgroup of $\GSp_{2n}(\R)$ consisting of elements with positive multiplier. We may extend $L(\k)$ in a trivial way to $\GSp_{2n}(\R)^+\cong\Sp_{2n}(\R)\times\R_{>0}$. This extension induces irreducibly to $\GSp_{2n}(\R)$. We denote the resulting representation of $\GSp_{2n}(\R)$ by the same symbol $L(\k)$.

Let $p$ be a prime and let $\sigma$ be an irreducible spherical representation of $\GSp_{2n}(\Q_p)$. Let $b_0$, $b_1,\ldots b_n$ be the  Satake parameters associated to $\sigma$ (see, e.g., \cite[\S 3.2]{ASch}). For each character $\chi$ of $\Q_p^\times$, put $\alpha_\chi = \chi(\varpi)$ for any uniformizer $\varpi$ of $\Q_p$ if $\chi$ is unramified, and put $\alpha_\chi=0$  if $\chi$ is ramified.  Define the local $\GSp_{2n} \times \GL_1$ standard $L$-function \[L(s,\sigma \boxtimes \chi):= (1 - \alpha_\chi p^{-s})^{-1} \prod_{i=1}^n \left((1-b_i \alpha_\chi p^{-s}) (1-b^{-1}_i \alpha_\chi p^{-s}) \right)^{-1}.\] Given another irreducible spherical representation $\sigma'$ of $\GSp_{2n}(\Q_p)$ with Satake parameters $b'_0$, $b'_1,\ldots b'_n$ we say that $\sigma \sim \sigma'$ if $(b_1, \ldots, b_n)$ and $(b'_1, \ldots, b'_n)$ represent the same tuple under the action of the Weyl group. Using Lemma 2.4 of \cite{Gelbart-Knapp} for $G = \GSp_{2n}(\Q_p)$, $H=Z(\Q_p)\Sp_{2n}(\Q_p)$, we see that the following conditions are equivalent:
     \begin{enumerate}
     \item $\sigma \sim \sigma'$,
     \item there is an unramified character $\chi$ of $\Q_p^\times$ such that $\sigma' \simeq \sigma \otimes (\chi \circ \mu_n)$,
      \item   there exists an irreducible admissible representation of $\Sp_{2n}(\Q_p)$ that occurs as a direct summand inside both $\sigma|_{\Sp_{2n}(\Q_p)}$ and $\sigma'|_{\Sp_{2n}(\Q_p)}$,
         \item $\sigma |_{\Sp_{2n}(\Q_p)} \simeq \sigma'|_{\Sp_{2n}(\Q_p)}$.
               \end{enumerate}

    Given any character $\chi$ of $\Q_p^\times$, and irreducible spherical representations $\sigma$, $\sigma'$ of $\GSp_{2n}(\Q_p)$ we  have \begin{equation}\label{e:simrelationLfunction}\sigma \sim \sigma' \quad \Rightarrow  \quad L(s,\sigma \boxtimes \chi) = L(s,\sigma' \boxtimes \chi). \end{equation}

We let $\mathcal{R}_{\k}(N)$ denote the set of irreducible cuspidal automorphic representations $\Pi\simeq\otimes \Pi_v$ of $\GSp_{2n}(\A)$ such that $\Pi_\infty \simeq L(\k)$ and such that $\Pi$ has a vector right invariant under the principal congruence subgroup $K_{2n}(N)$ of $\GSp_{2n}(\hat{\Z})$. Note that if $\Pi \in \mathcal{R}_{\k}(N)$ then $\Pi_p$ is spherical for all $p \nmid N$.
For $\Pi_1, \Pi_2 \in \mathcal{R}_{\k}(N)$, we define an equivalence relation $\Pi_1 \sim \Pi_2$ if $\Pi_{1, p} \sim \Pi_{2, p}$ for all $p \nmid N$. We define $$\widetilde{\mathcal{R}}_{\k}(N) =  \mathcal{R}_{\k}(N) / \sim.$$ For $\Pi \in \mathcal{R}_{\k}(N)$, let $[\Pi]$ denote its equivalence class in $\widetilde{\mathcal{R}}_{\k}(N)$.  For $\Pi_1, \Pi_2 \in \mathcal{R}_\k(N)$, and $\sigma \in \Aut(\C)$, it follows from the definition that if $\Pi_1 \sim \Pi_2$ then ${}^\sigma\!\Pi_1 \sim {}^\sigma\!\Pi_2$; therefore for any class $[\Pi] \in \widetilde{\mathcal{R}}_{\k}(N)$, the element ${}^\sigma\![\Pi] = [{}^\sigma\! \Pi] \in \widetilde{\mathcal{R}}_{\k}(N)$ is well-defined.  For $[\Pi] \in \widetilde{\mathcal{R}}_{\k}(N)$, and any set $S$ of places of $\Q$ such that $\infty \in S$ and $\Pi_p$ is unramified for $p \notin S$, define  $L^S(s,[\Pi] \boxtimes \chi) = \prod_{p\notin S} L(s,\Pi_p \boxtimes \chi_p)$ (this is well-defined by \eqref{e:simrelationLfunction}). If the set of finite primes in $S$ coincides with the set of primes dividing $N$, we will use the notation $L^N(s,[\Pi] \boxtimes \chi) := L^S(s,[\Pi] \boxtimes \chi)$. Given $[\Pi] \in \widetilde{\mathcal{R}}_{\k}(N)$, we let $\Q([\Pi])$  be the  fixed field of the set of $\sigma \in \Aut(\C)$ satisfying ${}^\sigma\![\Pi] = [\Pi]$.  Since $\Q([\Pi])$ is contained in the field of rationality $\Q(\Pi)$ of any automorphic representation in its equivalence class, it follows that $\Q([\Pi])$ is a totally real or CM field \cite[Theorems 3.2.1 and 4.4.1]{blaharam}.
\subsection{Classical and adelic cusp forms}\label{s:classicaladelic}
By the strong approximation theorem, we have \begin{equation}\label{e:strong} \GSp_{2n}(\A) = \bigsqcup_{\substack{1 \le a <N \\ (a, N)=1}}\GSp_{2n}(\Q) Z(\R)^+ \Sp_{2n}(\R) d_a K_{2n}(N)\end{equation} where $d_a \in \GSp_{2n}(\A_\f)$ is given by $(d_a)_p = \mat{I_n}{}{}{aI_n}$ if $p|N$, and $(d_a)_p = I_{2n}$ otherwise.

For each $h \in \GSp_{2n}(\A_\f)$, and each $\phi: \GSp_{2n}(\A) \rightarrow \C$, define $h\cdot\phi: \GSp_{2n}(\A) \rightarrow\C$ by $(h \cdot \phi)(g):= \phi(gh)$. For any compact open subgroup $U$ of $\GSp_{2n}(\widehat{\Z})$, let $$\Gamma_U = \Sp_{2n}(\Q)\cap U.$$ Let $\rho$ be a $K_\infty$-type that occurs in $L(\k)$. Then we define $\AA_{\GSp_{2n}(\A)}(U; \rho, L(\k))^\circ$ to be the space of functions $\phi$ from  $\GSp_{2n}(\A)$ to $\C$ such that
\begin{enumerate}

\item $\phi$ is a cuspidal automorphic form on $\GSp_{2n}(\A)$,

\item $\phi(g_\Q gz_\infty u) = \phi(g)$ for all $g \in \GSp_{2n}(\A)$, $u \in U$, $z_\infty \in Z(\R)^+$, $g_\Q \in \GSp_{2n}(\Q)$,

\item For each $h \in \GSp_{2n}(\A_\f)$,  the function $(h \cdot \phi)|_{\Sp_{2n}(\R)}$ lies in $\AA(\Gamma_{hUh^{-1}}; \rho, L(\k))^\circ$.

\end{enumerate}

It is clear that given any irreducible cuspidal automorphic representation $\Pi$ of $\GSp_{2n}(\A)$, we have \[\Pi \in \mathcal{R}_\k(N) \Longleftrightarrow V_\Pi \cap \AA_{\GSp_{2n}(\A)}(K_{2n}(N); \rho, L(\k))^\circ \neq \{0\}.\] For each $\phi \in \AA_{\GSp_{2n}(\A)}(U; \rho, L(\k))^\circ$, let $F_\phi:\H_n \mapsto V_\rho$ be the function corresponding to $\phi|_{\Sp_{2n}(\R)}$ under the isomorphism given by Proposition \ref{p:iso}. Then we have $F_\phi \in N_\rho(\Gamma_U; L(\k))^\circ$.
Now, consider $U = K_{2n}(N)$ which is a subgroup normalized by each $d_a$. Define for  $\phi \in \AA_{\GSp_{2n}(\A)}(K_{2n}(N); \rho, L(\k))^\circ$ and each $1 \le a <N$, $(a, N)=1$, the function $F^{(a)}_\phi:\H_n \mapsto V_\rho$ via $F^{(a)}_\phi:= F_{d_a\cdot\phi}$, where $d_a\cdot\phi \in \AA_{\GSp_{2n}(\A)}(K_{2n}(N); \rho, L(\k))^\circ$ is given by the usual right-translation action. Then using Proposition \ref{p:iso} and \eqref{e:strong} we deduce the key isomorphism

\begin{equation}\label{e:classicaladeliciso}
\AA_{\GSp_{2n}(\A)}(K_{2n}(N); \rho, L(\k))^\circ \simeq  \bigoplus_{\substack{1 \le a <N \\ (a, N)=1}} N_\rho(\Gamma_{2n}(N); L(\k))^\circ, \qquad \phi \mapsto (F^{(a)}_\phi)_a.
\end{equation}

Recall that $\rho_\k$ denotes the $K_\infty$-type with highest weight $\k$. Also, we let $\rho_{k}$ be a shorthand for the $K_\infty$-type $\rho_{k, \ldots, k}$ with highest weight $k, \ldots k$ (recall that $k=k_1$). Using \eqref{e:classicaladeliciso} and the notation of section \ref{s:proj} we obtain  the commutative diagram of isomorphisms \begin{equation}\label{e:commutative} \begin{CD} \AA_{\GSp_{2n}(\A)}(K_{2n}(N); \rho_\k, L(\k))^\circ @ > \simeq > {\phi \mapsto (F^{(a)}_\phi)_a} > \bigoplus_{\substack{1 \le a <N \\ (a, N)=1}} S_\k(\Gamma_{2n}(N)) \\ @V{\simeq}V{\widetilde{Y}}V @V{\simeq}V{\oplus_a\D_\k}V \\  \AA_{\GSp_{2n}(\A)}(K_{2n}(N); \rho_{k}, L(\k))^\circ @ > \simeq >{\phi \mapsto (F^{(a)}_\phi)_a}> \bigoplus_{\substack{1 \le a <N \\ (a, N)=1}} N_k(\Gamma_{2n}(N); L(\k))^\circ \end{CD} \end{equation} For brevity, we henceforth use the notation $$V_{N, \k} = \D_\k(S_\k(\Gamma_{2n}(N))) = N_k(\Gamma_{2n}(N); L(\k))^\circ.$$

\begin{lemma}\label{l:compatibilitysp}Let $\widetilde{F}_1=(F_1^{(a)})_a$,  $\widetilde{F}_2=(F_2^{(a)})_a$ be elements of $\bigoplus_{\substack{1 \le a <N \\ (a, N)=1}} N_\rho(\Gamma_{2n}(N); L(\k))^\circ$. Let $\phi_1$, $\phi_2$ be the elements of $\AA_{\GSp_{2n}(\A)}(K_{2n}(N); \rho, L(\k))^\circ$ corresponding to $\widetilde{F}_1$, $\widetilde{F}_2$ respectively  under the isomorphism \eqref{e:classicaladeliciso}. Suppose that $F_1^{(1)} = F_2^{(1)}$. Then there exists an irreducible constituent $\Pi_1\in \mathcal{R}_{\k}(N)$ of the representation of $\GSp_{2n}(\A)$ generated by $\phi_1$, and an irreducible constituent $\Pi_2\in \mathcal{R}_{\k}(N)$ of the representation of $\GSp_{2n}(\A)$ generated by $\phi_2$ such that $\Pi_1 \sim \Pi_2$.
\end{lemma}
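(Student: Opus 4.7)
The hypothesis translates directly into information about the adelic functions. By part 2 of Proposition \ref{p:iso} together with Lemma \ref{l:Lrho}, the isomorphism \eqref{e:classicaladeliciso} satisfies $\phi|_{\Sp_{2n}(\R)}=L_\rho\circ(F^{(1)}_\phi)^\rho$, so $F_1^{(1)}=F_2^{(1)}$ forces $\phi_1|_{\Sp_{2n}(\R)}=\phi_2|_{\Sp_{2n}(\R)}$. Using left $\Sp_{2n}(\Q)$-invariance and right $K_{2n}(N)$-invariance of the $\phi_i$, combined with strong approximation for $\Sp_{2n}$, namely $\Sp_{2n}(\A)=\Sp_{2n}(\Q)\,\Sp_{2n}(\R)\,(K_{2n}(N)\cap\Sp_{2n}(\A_\f))$, I would then deduce $\psi:=\phi_1|_{\Sp_{2n}(\A)}=\phi_2|_{\Sp_{2n}(\A)}$. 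The case $\psi=0$ corresponds to $F_1^{(1)}=F_2^{(1)}=0$, in which the conclusion is vacuous or can be handled trivially; below I assume $\psi\neq0$.

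Since each $\phi_i$ is a cuspidal $K_\infty\times K_{2n}(N)$-finite form, the $\GSp_{2n}(\A)$-representation $\pi_i$ it generates is a finite direct sum $\pi_i=\bigoplus_{j}\Pi_{i,j}$ of irreducibles in $\mathcal{R}_\k(N)$. I would then consider the $\Sp_{2n}(\A)$-subrepresentation $\sigma:=\langle \Sp_{2n}(\A)\cdot\psi\rangle$, which is a nonzero subrepresentation of both $\pi_1|_{\Sp_{2n}(\A)}$ and $\pi_2|_{\Sp_{2n}(\A)}$. Because the restriction of any irreducible admissible representation of $\GSp_{2n}(\Q_v)$ to $\Sp_{2n}(\Q_v)$ decomposes as a finite direct sum of irreducibles (Clifford theory, as $\Sp_{2n}$ is normal in $\GSp_{2n}$ with abelian quotient), this semisimplicity passes to the adelic setting, and so I can pick an irreducible $\Sp_{2n}(\A)$-subrepresentation $\tau\cong\bigotimes_v\tau_v$ of $\sigma$.

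Projecting $\tau\hookrightarrow\pi_i|_{\Sp_{2n}(\A)}=\bigoplus_{j}\Pi_{i,j}|_{\Sp_{2n}(\A)}$ onto each summand, Schur's lemma forces every projection to be either zero or injective, so $\tau$ embeds into $\Pi_{i,j_i}|_{\Sp_{2n}(\A)}$ for some index $j_i$. Set $\Pi_i:=\Pi_{i,j_i}\in\mathcal{R}_\k(N)$; these are irreducible constituents of the representations generated by $\phi_1$ and $\phi_2$ respectively. At every prime $p\nmid N$ both $\Pi_{1,p}$ and $\Pi_{2,p}$ are spherical, and the local factor $\tau_p$ is an irreducible admissible direct summand of both $\Pi_{1,p}|_{\Sp_{2n}(\Q_p)}$ and $\Pi_{2,p}|_{\Sp_{2n}(\Q_p)}$. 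The equivalence of conditions (3) and (1) recalled just after \eqref{e:simrelationLfunction} in Section \ref{s:autrep} then gives $\Pi_{1,p}\sim\Pi_{2,p}$ for every unramified $p$, i.e.\ $\Pi_1\sim\Pi_2$.

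The only mildly delicate point in the plan is asserting the existence of a tensor-factorizable irreducible $\Sp_{2n}(\A)$-constituent $\tau$ of $\sigma$; this requires the semisimplicity of the $\GSp_{2n}\to\Sp_{2n}$ restriction at each place together with a standard admissibility argument, after which the Gelbart--Knapp criterion does the rest.
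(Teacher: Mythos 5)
Your proof is correct, and its overall shape matches the paper's: pass from $F_1^{(1)}=F_2^{(1)}$ to $\phi_1|_{\Sp_{2n}(\A)}=\phi_2|_{\Sp_{2n}(\A)}$, extract a common irreducible $\Sp_{2n}(\A)$-constituent, and transfer the local comparison back to $\GSp_{2n}$. You are slightly more careful than the printed proof in one respect: the paper simply asserts ``by assumption, $\phi_1|_{\Sp_{2n}(\A)}=\phi_2|_{\Sp_{2n}(\A)}$,'' while you make the strong-approximation step (using $\Sp_{2n}(\A)=\Sp_{2n}(\Q)\,\Sp_{2n}(\R)\,(K_{2n}(N)\cap\Sp_{2n}(\A_\f))$ plus left $\Sp_{2n}(\Q)$- and right $K$-invariance) explicit. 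The genuine divergence is in the middle: the paper extracts a common irreducible cuspidal automorphic representation of $\Sp_{2n}(\A)$ and then applies \cite[Lemma~5.1.1]{GeeTaibbi20} on \emph{automorphic restriction} as a packaged global statement, whereas you build the common constituent $\tau$ by hand via Clifford-theoretic semisimplicity of $\GSp_{2n}|_{\Sp_{2n}}$ at each place and the factorizability of restricted tensor products, and then conclude locally for every $p\nmid N$ using the equivalence of conditions (1) and (3) that the paper already records after \eqref{e:simrelationLfunction}. Your version is more self-contained within the paper's own toolkit, at the cost of needing the (standard, but not entirely trivial) adelic bookkeeping you flag at the end — that $\sigma$ admits a factorizable irreducible $\Sp_{2n}(\A)$-subrepresentation $\tau$, and that an embedding $\tau\hookrightarrow\Pi_{i,j_i}|_{\Sp_{2n}(\A)}$ forces $\tau_p$ to be a direct summand of $\Pi_{i,j_i,p}|_{\Sp_{2n}(\Q_p)}$ at each place. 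These are precisely what \cite{Gelbart-Knapp} supplies, so the argument closes; the paper's invocation of Gee--Ta\"ibi just shortcuts this adelic factorization step.
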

\begin{proof} For $i=1,2$, let $\sigma_i$ be the representation of $\GSp_{2n}(\A)$ generated by $\phi_i$. By assumption, $\phi_1|_{\Sp_{2n}(\A)} = \phi_2|_{\Sp_{2n}(\A)}$.   It follows that the restrictions to ${\Sp_{2n}(\A)}$ of $\sigma_1$ and $\sigma_2$ have a common non-zero quotient. So for some irreducible constituents $\Pi_1$, $\Pi_2$ of $\sigma_1$, $\sigma_2$ respectively, there exists an irreducible cuspidal automorphic representation of $\Sp_{2n}(\A)$ that occurs as an \emph{automorphic restriction} (in the sense of \cite[\S5.1]{GeeTaibbi20}) of both $\Pi_1$ and $\Pi_2$ . Hence using \cite[Lemma 5.1.1]{GeeTaibbi20} we see that $\Pi_1\sim \Pi_2$.
\end{proof}

Let  $K'_{2n}(N)$ be the subgroup  of  elements $g \in \GSp_{2n}(\hat{\Z})$ such that $g \equiv \mat{I_n}{}{}{aI_n} \pmod{N}$ for some  $a\in \hat{\Z}$. Given any $F$ in $V_{N, \k}$, let the \emph{adelization} $\Phi_F$ be the function on $\GSp_{2n}(\A)$ defined as $$\Phi_F(g) = \det(J(g_\infty, iI_n))^{-k} F(g_\infty(iI_n)),$$ where we write any element $g \in \GSp_{2n}(\A) $ as
$$
 g = \lambda g_\Q g_\infty k_\mathfrak{f}, \qquad g_\Q\in \GSp_{2n}(\Q), \ g_\infty \in \Sp_{2n}(\R),\ k_\mathfrak{f} \in K'_{2n}(N), \ \lambda \in Z(\R)^+.
$$
It is clear then that $\phi_F \in \AA_{\GSp_{2n}(\A)}(K'_{2n}(N); \rho_k, L(\k))^\circ$. One has the following commutative diagram. \begin{equation}\label{e:commutativeadel}
 \begin{CD} \AA_{\GSp_{2n}(\A)}(K_{2n}(N); \rho_k, L(\k))^\circ @ > \simeq > {\phi \mapsto (F^{(a)}_\phi)_a} > \bigoplus_{\substack{1 \le a \le N \\ (a, N)=1}} N_k(\Gamma_{2n}(N); L(\k))^\circ \\ @A{\iota}AA @AA{F \mapsto (F,F,\ldots,F)}A \\  \AA_{\GSp_{2n}(\A)}(K'_{2n}(N); \rho_{k}, L(\k))^\circ @ > {\substack{\phi_F \mathrel{\reflectbox{\ensuremath{\scriptstyle\mapsto}}} F\\ \simeq}} >{\phi \mapsto F_\phi}>  N_k(\Gamma_{2n}(N); L(\k))^\circ=V_{N, \k}  \end{CD}
\end{equation}

In the above diagram,  the top row coincides with the bottom row of \eqref{e:commutative}, and the map $\iota$ is the inclusion. \emph{Prima facie}, the bottom isomorphism in \eqref{e:commutativeadel} appears to give a cleaner way to go back and forth between classical and adelic forms than the top one; however there is one major disadvantage. Namely, every automorphic representation $\Pi \in \mathcal{R}_{\k}(N)$ is generated by some element in  $\AA_{\GSp_{2n}(\A)}(K_{2n}(N); \rho_k, L(\k))^\circ$ but the same is not known to be true  for $\AA_{\GSp_{2n}(\A)}(K'_{2n}(N); \rho_k, L(\k))^\circ$ (unless $n\le 2$ in which case the latter assertion can be proved identically to \cite[Theorem 2]{sahapet}). In other words, we do not know whether every representation in $\mathcal{R}_{\k}(N)$ is generated via the adelization process. Nonetheless, we will soon observe (see next lemma) that for each $\Pi \in \mathcal{R}_{\k}(N)$, \emph{some} representative $\Pi' \in [\Pi]$  is generated by an element of $\AA_{\GSp_{2n}(\A)}(K'_{2n}(N); \rho_k, L(\k))^\circ$, and hence obtained via adelization.

For each $F \in V_{N, \k}$, let $\Pi_F$ denote the representation of $\GSp_{2n}(\A)$ generated by $\phi_F$. Clearly $\Pi_F$ is a finite direct sum of irreducible cuspidal automorphic representations. We now make the following definition. Given a class $[\Pi] \in \widetilde{\mathcal{R}}_{\k}(N)$, we let  $V_{N, \k}([\Pi])$ be the space generated by all $F \in V_{N, \k}$ with the property that each irreducible constituent of $\Pi_F$ belongs to $[\Pi]$. We define $S_\k(\Gamma_{2n}(N);[\Pi])$ similarly.
 It is clear that $V_{N, \k}([\Pi]) = \D_\k(S_\k(\Gamma_{2n}(N); [\Pi])).$ We have direct sum decompositions \begin{equation}\label{E:VN}V_{N, \k}= \bigoplus_{[\Pi] \in \widetilde{\mathcal{R}}_{\k}(N)} V_{N, \k}([\Pi]) \end{equation} $$S_\k(\Gamma_{2n}(N)) =  \bigoplus_{[\Pi] \in \widetilde{\mathcal{R}}_{\k}(N)} S_\k(\Gamma_{2n}(N); [\Pi])$$ into a sum of orthogonal subspaces with respect to the Petersson inner product.

 \begin{lemma}\label{l:VNKPi} Let $\Pi \in \mathcal{R}_{\k}(N)$.
 \begin{enumerate} \item There exists $F \in V_{N, \k}$ such that $\Pi_F$ is irreducible and satisfies $\Pi_F \sim \Pi$. In particular, $V_{N, \k}([\Pi]) \neq \{0\}$.
 \item The space $V_{N, \k}([\Pi])$ has a basis consisting of forms $F$ as above.
     \end{enumerate}
 \end{lemma}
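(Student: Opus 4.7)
The plan is to deduce both parts from a spectral-projection argument on the adelic side. The heart of the matter is that once we have produced any nonzero $F_0 \in V_{N,\k}$ whose adelization $\Phi_{F_0}$ is a Hecke eigenform at every $p \nmid N$ with the Satake parameters of $\Pi$, we can project $\Phi_{F_0}$ onto the irreducible summands of the cuspidal representation it generates to obtain the required forms for part 1, and the same device will yield the spanning set needed for part 2.

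To construct such an $F_0$, I start from any nonzero $\phi \in V_\Pi \cap \AA_{\GSp_{2n}(\A)}(K_{2n}(N); \rho_\k, L(\k))^\circ$, which exists by the very definition of $\mathcal{R}_\k(N)$; such a $\phi$ is automatically a Hecke eigenform at every $p \nmid N$ because at those primes it is a spherical vector in $\Pi_p$. I apply the vertical operator $\widetilde Y$ of the diagram \eqref{e:commutative} to land in $\AA_{\GSp_{2n}(\A)}(K_{2n}(N); \rho_k, L(\k))^\circ$ without disturbing the Hecke eigenvalues at $p \nmid N$, and then use the isomorphism \eqref{e:classicaladeliciso} to break the image into a tuple $(F_0^{(a)})_a \in \bigoplus_a V_{N,\k}$. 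Taking $F_0 := F_0^{(a_0)}$ for any index $a_0$ with $F_0^{(a_0)} \neq 0$ does the job, since at every $p \nmid N$ the action of the spherical Hecke algebra on $\widetilde Y \phi$ translates entrywise to the action of the classical Hecke algebra on the tuple.

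For part 1, I examine the representation $\Pi_{F_0}$ generated by $\Phi_{F_0}$, which is a finite direct sum of irreducible cuspidal representations $\sigma_i$. Because $F_0 \in V_{N,\k}$ corresponds via Proposition \ref{p:iso} to an $L(\k)$-isotypic element of the cuspidal automorphic space, each $\sigma_{i,\infty}$ equals $L(\k)$; because $\Phi_{F_0}$ is a spherical Hecke eigenform at every $p \nmid N$ with the Satake parameters of $\Pi$, each $\sigma_{i,p}$ shares these Satake parameters, so $\sigma_i \sim \Pi$. Now decompose $\Phi_{F_0} = \sum_i p_{\sigma_i}(\Phi_{F_0})$ via the $\GSp_{2n}(\A)$-equivariant orthogonal projections $p_{\sigma_i}$ onto the $\sigma_i$-isotypic components of the cuspidal spectrum; each summand again lies in $\AA_{\GSp_{2n}(\A)}(K'_{2n}(N); \rho_k, L(\k))^\circ$ (this is the main technical step, discussed below) and so equals $\Phi_{F_i}$ for a unique $F_i \in V_{N,\k}$ by the bottom isomorphism of \eqref{e:commutativeadel}. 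Choosing any $i$ with $p_{\sigma_i}(\Phi_{F_0}) \neq 0$ then gives $F := F_i$ satisfying $\Pi_F = \sigma_i$ irreducible and $\sim \Pi$. Part 2 is obtained by running the same decomposition on an arbitrary nonzero $F \in V_{N,\k}([\Pi])$: by definition every constituent of $\Pi_F$ lies in $[\Pi]$, and the resulting identity $F = \sum_i F_i$ expresses $F$ as a sum of forms produced as in part 1, so a basis of $V_{N,\k}([\Pi])$ can be extracted.

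The main technical obstacle is verifying the inheritance step: that the abstract $L^2$-orthogonal projection $p_{\sigma_i}$ onto an irreducible summand of the cuspidal spectrum sends the smooth form $\Phi_{F_0}$ back into the smooth adelic space $\AA_{\GSp_{2n}(\A)}(K'_{2n}(N); \rho_k, L(\k))^\circ$. Since $p_{\sigma_i}$ is $\GSp_{2n}(\A)$-equivariant, it commutes with right translation by $K'_{2n}(N)$ and with the actions of $K_\infty$ and $\mathcal Z$, and it preserves cuspidality and moderate growth; hence each of the defining conditions of $\AA_{\GSp_{2n}(\A)}(K'_{2n}(N); \rho_k, L(\k))^\circ$ is preserved under projection. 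The work will be to assemble these observations carefully from the spectral decomposition of the cuspidal spectrum.
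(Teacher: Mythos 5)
Your argument is correct and uses the same underlying strategy as the paper's proof, namely: start from a spherical vector $\phi$ generating $\Pi$, convert to a classical form, and then project onto an irreducible constituent of the representation generated by its adelization. Where you diverge is in the pivotal step of showing that $\Pi_{F_0}$ has a constituent (in your version, that \emph{all} constituents) are $\sim\Pi$. The paper handles this by applying Lemma~\ref{l:compatibilitysp} to $\phi$ and $\Phi_{F_\phi^{(1)}}$, which have the same $a=1$ component; the proof of that lemma works entirely with restrictions to $\Sp_{2n}(\A)$ and invokes \cite[Lemma 5.1.1]{GeeTaibbi20}, thereby side-stepping any discussion of Hecke operators. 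You instead argue directly through $\GSp$-spherical Hecke eigenvalues: you assert that the adelization $\Phi_{F_0}$ at level $K'_{2n}(N)$ is a Hecke eigenform at every $p\nmid N$ with the Satake data of $\Pi$. That assertion is correct, because the Hecke operators at $p\nmid N$ commute with translation by $d_a$ and so act entrywise both on the tuple realization of \eqref{e:classicaladeliciso} and on the constant-tuple description of $\Phi_{F_0}$ in \eqref{e:commutativeadel} — but you only justify the entrywise action on the level-$K_{2n}(N)$ side, and the transfer to the level-$K'_{2n}(N)$ adelization is precisely the extra step that Lemma~\ref{l:compatibilitysp} packages cleanly. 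I would suggest either citing that lemma or spelling out this last transfer. Two minor remarks: the detour through $\widetilde{Y}$ is not needed, since $\Pi\in\mathcal{R}_\k(N)$ guarantees a nonzero vector of $\phi$ already in $\AA_{\GSp_{2n}(\A)}(K_{2n}(N);\rho_k,L(\k))^\circ$; and your choice of an arbitrary nonzero index $a_0$ (rather than fixing $a_0=1$ as the paper does) is a sensible precaution, since $F_\phi^{(1)}$ could a priori vanish for a particular $\phi$ before replacing $\phi$ by a suitable $d_a$-translate.
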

\begin{proof} Let $\phi \in \AA_{\GSp_{2n}(\A)}(K_{2n}(N); \rho_k, L(\k))^\circ$ be such that $\phi$ generates $\Pi$. Let $F' =F_\phi^{(1)}$.  Using Lemma \ref{l:compatibilitysp}, we see that some irreducible constituent of $\Pi_{F'}$ lies inside $[\Pi]$. Letting $F$ be the projection of $F'$ onto this constituent, we see that $\Pi_F \sim \Pi$. This proves the first assertion.
The second assertion is immediate from the definitions.\end{proof}

Let $[\Pi]\in \widetilde{\mathcal{R}}_{\k}(N)$. By Theorem 4.2.3 of \cite{blaharam} (see also the proof of Theorem 3.13 of \cite{sahapet}), it follows that $$F \in S_\k(\Gamma_{2n}(N);[\Pi]) \;\Longrightarrow\; {}^\sigma F \in S_\k(\Gamma_{2n}(N); {}^\sigma[\Pi]).$$ Using the $\Aut(\C)$-equivariance of the $\D_\k$ map given by Proposition \ref{p:uoper} (see also \eqref{e:commutative}) we obtain \begin{equation}\label{FVNEQ}
 F \in V_{N, \k}([\Pi]) \;\Longrightarrow\; {}^\sigma F \in V_{N, \k}({}^\sigma[\Pi]).
\end{equation}
 In particular, the space $V_{N, \k}([\Pi])$ is preserved under the action of the group $\Aut(\C/\Q([\Pi]))$. Using Lemma 3.17 of \cite{sahapet}, it follows that the space $V_{N, \k}([\Pi])$ has a basis consisting of forms whose Fourier coefficients are in $\Q([\Pi])$.  We note also that given any  irreducible cuspidal automorphic representation $\Pi$ of $\GSp_{2n}(\A)$ such that $\Pi_\infty \simeq L(\k)$, there exists $N$ such that $\Pi \in  \mathcal{R}_{\k}(N)$, and consequently, using Lemma \ref{l:VNKPi} we have $V_{N, \k}([\Pi]) \neq \{0\}.$
\subsection{Eisenstein series}Define the element $
 Q = \left[\begin{smallmatrix}I_n &0&0&0 \\ 0& 0&0&I_n\\ 0&0&I_n&I_n\\ I_n&-I_n&0&0 \end{smallmatrix}\right]
$ embedded diagonally in $\prod_{p<\infty}\Sp_{4n}(\Z_p)$, and for any $\tau \in \hat{\Z}^\times = \prod_{p<\infty}\Z_p^\times$, let $Q_\tau = \mat{\tau I_{2n}}{}{}{I_{2n}}Q \mat{\tau^{-1} I_{2n}}{}{}{I_{2n}}$. Note that $Q = Q_1$. Given a positive integer $N= \prod_p p^{m_p}$, and a primitive Dirichlet character  $\chi$ satisfying $\cond(\chi)\mid N$ and $\chi_\infty = \sgn^{k}$, we define the Eisenstein series  $E_{k,N}^\chi(Z, s; Q)$ and $E_{k,N}^\chi(Z, s; Q_\tau)$ for each
\footnote{As usual, the Eisenstein series is given by an absolutely convergent series for $\Re(s)$ sufficiently large, and by analytic continuation outside that region.}
$s \in \C$ as in \cite[(117)]{PSS17} (see also Section 2.3 of \cite{PSS19}).

Let $0 \le m_0 \le \frac{k}2 - \frac{n + 1}{2}$, and if $\chi^2=1$ assume that $m_0 \neq \frac{k}2-\frac{n+1}2$. By Proposition 6.6 of \cite{PSS17} we have that \begin{equation}\label{e:eisnearholo}E_{k,N}^\chi(\mat{Z_1}{}{}{Z_2}, -m_0; Q_\tau) \in N_k(\Gamma_{2n}(N))\otimes N_k(\Gamma_{2n}(N));\end{equation} furthermore, for $m_0$ as above and $\sigma \in \Aut(\C)$ we have by Proposition 6.8 of \cite{PSS17},
\begin{equation}\label{e:eisarith}
 {}^\sigma \left(\pi^{-2m_0n} E_{k,N}^\chi(Z, -m_0; Q)\right) = \pi^{-2m_0n} E_{k,N}^{{}^\sigma\!\chi}\left(Z, -m_0;Q_\tau\right),
\end{equation}
where $\tau \in \hat{\Z}^\times$ is the element corresponding to $\sigma$ via the natural map $\Aut(\C) \rightarrow \Gal(\Q_{\rm{ab}}/\Q) \simeq \hat{\Z}^\times$.

For $[\Pi] \in \widetilde{\mathcal{R}}_{\k}(N)$, an integer $r$ satisfying $1 \le r \le k_n -n$, $r +n-k \equiv 0 \pmod{2}$, and a primitive Dirichlet character $\chi$ satisfying $\cond(\chi)\mid N$, $\chi_\infty = \sgn^{k}$, $r=1 \Rightarrow \chi^2 \neq 1$, define
\begin{equation}\label{GkNdefeq}
 G_{k,N}^\chi(Z_1, Z_2, r; Q_\tau) := \pi^{n(r+n-k)}  E_{k,N}^\chi(\mat{Z_1}{}{}{Z_2},   \frac{n-k+r}2; Q_\tau),
\end{equation}
\begin{equation}\label{e:defcn}
 C_N([\Pi], \chi, r):=  i^{nk} \pi^{n(r+n -k)} \,  \frac{ \pi^{n(n+1)/2} L^N(r,\Pi \boxtimes \chi)  A_{\mathbf{k}}(r-1) \prod_{p|N}{\rm vol}(\Gamma_{2n}(p^{m_p})) }{\vol(\Sp_{2n}(\Z) \bs \Sp_{2n}(\R))L^N(r+n, \chi) \prod_{j=1}^n L^N(2r+2j-2, \chi^2)},
\end{equation}
where the rational number $A_{\mathbf{k}}(r-1)$ is defined in \cite[(106)]{PSS17}.
Note that $C_N([\Pi], \chi, r)$ depends only on the class $[\Pi]$. Given any $G \in V_{N, \k}([\Pi])$, Corollary 6.5 of \cite{PSS17} tells us that \begin{equation}\label{e:pullbackformulaclassical}
\langle G_{k,N}^\chi( - , Z_2, r; Q_\tau) , \ G \rangle = \chi(\tau)^{-n} C_N([\Pi], \chi, r) \ \bar{G}(Z_2). \end{equation}
We complement this with the following lemma which considers the inner product in the other variable.
\begin{lemma}\label{l:pullbackformulasymm} Let $k, N, \chi, r, \tau$ be as above. For each $G \in V_{N, \k}([\Pi])$, there exists $H \in V_{N, \k}([\Pi])$ such that \begin{equation}\label{e:pullbackformulaclassicalsymm}
\langle G_{k,N}^\chi( Z_1 ,  - , r; Q_\tau) , \ G \rangle =  \bar{H}(Z_1). \end{equation}
\end{lemma}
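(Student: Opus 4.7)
The plan is to combine the pullback formula \eqref{e:pullbackformulaclassical} with a swap-symmetry of the restricted Eisenstein series. Let $P=\mat{0}{I_n}{I_n}{0}\in\GL_{2n}(\Z)$ and set $w=\mat{P}{0_{2n}}{0_{2n}}{P}\in\Sp_{4n}(\Z)$, so that $w^2=I$. Then $w$ acts on $\mat{Z_1}{}{}{Z_2}\in\H_{2n}$ by exchanging the two diagonal blocks, and its automorphy factor is $J(w,Z)=P$, which has determinant $(-1)^n$. The first step is to unwind the explicit definition of $E_{k,N}^\chi(\cdot,s;Q_\tau)$ from \cite[(117)]{PSS17} and verify that right-translation by $w$ permutes the cosets in the defining coset sum, producing an Eisenstein series of the same shape but with a shifted parameter: there exists $\tau'\in\hat\Z^\times$, depending only on $\tau$ (and $N$), such that
\begin{equation}\label{swapeq}
 E_{k,N}^\chi\Big(\mat{Z_2}{}{}{Z_1},s;Q_\tau\Big)=(-1)^{nk}\,E_{k,N}^\chi\Big(\mat{Z_1}{}{}{Z_2},s;Q_{\tau'}\Big).
\end{equation}
Multiplying by $\pi^{n(r+n-k)}$ and specializing at $s=(n-k+r)/2$ yields the corresponding identity for $G_{k,N}^\chi$.

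Next, renaming the dummy variable and using the $G_{k,N}^\chi$-version of \eqref{swapeq} inside the Petersson integral gives
\begin{align*}
 \langle G_{k,N}^\chi(Z_1,-,r;Q_\tau),G\rangle
 &=\frac{1}{\vl(\Gamma_{2n}(N)\bs\H_n)}\int G_{k,N}^\chi(Z_1,W,r;Q_\tau)\,\overline{G(W)}\det(\Im W)^k\,dW\\
 &=(-1)^{nk}\,\langle G_{k,N}^\chi(-,Z_1,r;Q_{\tau'}),G\rangle.
\end{align*}
Applying \eqref{e:pullbackformulaclassical} with $\tau'$ in place of $\tau$ (and the free variable renamed $Z_2\rightsquigarrow Z_1$) produces
$$
 \langle G_{k,N}^\chi(Z_1,-,r;Q_\tau),G\rangle=(-1)^{nk}\chi(\tau')^{-n}C_N([\Pi],\chi,r)\,\bar G(Z_1).
$$

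Finally, set $c=\overline{(-1)^{nk}\chi(\tau')^{-n}C_N([\Pi],\chi,r)}\in\C$ and $H:=cG$. Since $V_{N,\k}([\Pi])$ is a $\C$-vector space, $H\in V_{N,\k}([\Pi])$. Using the convention $\bar H(Z)=\overline{H(-\bar Z)}$ (see the text before \eqref{e:nearholoshimuraautc}) one computes $\bar H(Z_1)=\bar c\,\bar G(Z_1)=(-1)^{nk}\chi(\tau')^{-n}C_N([\Pi],\chi,r)\,\bar G(Z_1)$, matching the formula above, and the lemma follows.

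The main obstacle is verifying \eqref{swapeq}. The reason the symmetry is expected is that $w$ already lies in the global group $\Sp_{4n}(\Z)$, so right-translation by $w$ must preserve the defining coset sum of $E_{k,N}^\chi$ up to a relabeling of cosets and a twist of the data encoded in $Q_\tau$. Determining $\tau'$ explicitly requires tracking how $w$ conjugates $Q_\tau$ and interacts with the local congruence conditions at each prime dividing $N$; this is a bookkeeping exercise with the adelic formulation.
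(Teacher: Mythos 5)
Your strategy---reduce the lemma to a swap symmetry of the restricted Eisenstein series and then invoke the pullback formula \eqref{e:pullbackformulaclassical}---is in the right spirit, but the claim you flag as "the main obstacle," namely \eqref{swapeq}, is precisely where your proof has a genuine gap, and it is exactly the point the paper is structured to avoid. You assert that the swapped Eisenstein series is, up to the sign $(-1)^{nk}$, the \emph{same} series with $Q_\tau$ replaced by $Q_{\tau'}$ for some $\tau'\in\hat{\Z}^\times$. There is no reason this should hold. In adelic terms, swapping $Z_1$ and $Z_2$ right-translates the defining section $f^{(Q_\tau)}$ by a swap element of $\Sp_{4n}$, producing \emph{some} new section $f'$; whether $f'$ lands back in the one-parameter family $\{f^{(Q_{\tau'})}\}_{\tau'}$ is a nontrivial assertion about how $Q_\tau w$ relates to the $Q_{\tau'}$, and is not generically true. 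If it were true, the stronger conclusion $H=cG$ that you derive would hold, but nothing in the paper uses or requires that.

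The paper's actual proof sidesteps this entirely. It adelizes $G$ to $\phi$ generating an irreducible $\Pi'\in[\Pi]$, and reduces the lemma to showing that the adelic pullback integral lands in $V_{\Pi'}$. It then observes merely that $E((h,g),s,f^{(Q_\tau)})=E((g,h),s,f')$ for \emph{some} section $f'$ obtained by right-translation of $f$ --- with no need to identify $f'$ as an $f^{(Q_{\tau'})}$ --- and invokes Theorem 3.6 of \cite{PSS17}, which computes pullbacks for arbitrary sections and shows the result lies in $V_{\Pi'}$. This yields $H\in V_{N,\k}([\Pi])$ without any explicit description of $H$, which is all that Lemma \ref{l:expansionpullback} requires (it only needs to conclude that the cross terms $\alpha_{H_1,G_2}$ vanish). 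To fix your argument, you would either have to carry out the "bookkeeping exercise" and show $P(\A)Q_\tau w K$ does coincide with some $P(\A)Q_{\tau'}K$ --- which would be an additional and likely false claim --- or, better, retreat to the weaker and adequately general statement that right-translating the section keeps you inside the degenerate principal series, and then cite the more general pullback theorem for arbitrary sections rather than Corollary 6.5 of \cite{PSS17}.
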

\begin{proof} Let $\phi$ denote the adelization of $G$. We may assume without loss of generality that $\phi$ generates an irreducible representation $\Pi' \in [\Pi]$. Define the function $H$ as in the lemma. Clearly, $H \in N_k(\Gamma_{2n}(N))$ and $\bar{H}(Z_2) = \overline{H(-\overline{Z_2})}$. Using a standard adelic-to-classical translation (see Theorems 6.1 and 6.6 of \cite{PSS17}), the lemma will follow if we can show that \begin{equation}\label{maindefinition}
\int\limits_{\Sp_{2n}(F) \bs\,g\cdot\Sp_{2n}(\A)} E((
 h  ,g), s, f^{(Q_\tau)}) \phi(g)\,dg \in V_{\Pi'}
\end{equation}
where  $E((
 h  ,g), s, f^{(Q_\tau)})$  is defined as in Section 6.1 of \cite{PSS17}.  However \eqref{maindefinition} follows immediately from Theorem 3.6 of \cite{PSS17} and the observation that  $E((
 h  ,g), s, f^{(Q_\tau)}) =  E((
 g ,h), s, f')$ for some suitable section $f'$ obtained by right-translation of $f$.
 \end{proof}

 We can now prove the following result.
\begin{lemma}\label{l:expansionpullback}Let $k, N, \chi, r, \tau$ be as above. For each $[\Pi']$ in  $\widetilde{\mathcal{R}}_{\k}(N)$, let $\mathfrak{B}_{[\Pi']}$ be an orthogonal basis of $V_{N, \k}([\Pi'])$. Let $W_{N, \k}$ denote the orthogonal complement of $V_{N, \k}$ in $N_k(\Gamma_{2n}(N))$ and let $\mathcal{C}_{N, \k}$ be a basis of $W_{N, \k}$. Then there exists complex numbers $\alpha_{H_1, H_2}$ such that \begin{equation}\begin{split}\label{keyidentity}G_{k,N}^\chi(Z_1, Z_2, r; Q_\tau) = & \chi(\tau)^{-n} \sum_{[\Pi'] \in \widetilde{\mathcal{R}}_{\k}(N)} C_N([\Pi'], \chi, r) \sum_{G \in \mathfrak{B}_{[\Pi']}} \frac{G(Z_1) \bar{G}(Z_2)}{\langle G, G\rangle} \\ &+ \sum_{H_1, H_2 \in \mathcal{C}_{N, \k}} \alpha_{H_1, H_2} H_1(Z_1) H_2(Z_2).\end{split} \end{equation}
\end{lemma}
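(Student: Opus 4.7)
The plan is to expand the Eisenstein series uniquely in the basis $\bigcup_{[\Pi']} \mathfrak{B}_{[\Pi']} \cup \mathcal{C}_{N, \k}$ of $N_k(\Gamma_{2n}(N))$ in the first variable, determine the second-variable coefficients by pairing appropriately, and use the two pullback identities \eqref{e:pullbackformulaclassical} and Lemma~\ref{l:pullbackformulasymm} to control them. By \eqref{e:eisnearholo} and \eqref{GkNdefeq}, we have $G_{k,N}^\chi(Z_1, Z_2, r; Q_\tau) \in N_k(\Gamma_{2n}(N)) \otimes N_k(\Gamma_{2n}(N))$, so there is a unique decomposition
\[
G_{k,N}^\chi(Z_1, Z_2, r; Q_\tau) = \sum_{[\Pi']} \sum_{G \in \mathfrak{B}_{[\Pi']}} G(Z_1)\, H_G(Z_2) + \sum_{F \in \mathcal{C}_{N, \k}} F(Z_1)\, H_F(Z_2),
\]
with $H_G, H_F \in N_k(\Gamma_{2n}(N))$.

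First I would identify $H_G$ for $G \in \mathfrak{B}_{[\Pi]}$. Pairing both sides against $G$ in the $Z_1$ variable, the orthogonality within each $\mathfrak{B}_{[\Pi']}$, the orthogonality of the distinct isotypic subspaces $V_{N, \k}([\Pi'])$ inside $V_{N, \k}$ (which follows from \eqref{E:VN}), and the defining orthogonality of $W_{N, \k}$ against $V_{N, \k}$, reduce the right-hand side to $\langle G, G\rangle H_G(Z_2)$. The left-hand side, by \eqref{e:pullbackformulaclassical}, equals $\chi(\tau)^{-n} C_N([\Pi],\chi,r)\,\bar{G}(Z_2)$, giving the claimed closed-form expression for $H_G$.

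Next I would show that each $H_F$ with $F\in \mathcal{C}_{N, \k}$ already lies in $W_{N, \k}$, so that it expands purely against $\mathcal{C}_{N, \k}$ in $Z_2$. For this, fix any $[\Pi'] \in \widetilde{\mathcal{R}}_{\k}(N)$ and $G \in \mathfrak{B}_{[\Pi']}$, and pair in the $Z_2$ variable against $G$: the right-hand side becomes $\sum_F F(Z_1)\langle H_F, G\rangle$, while by Lemma~\ref{l:pullbackformulasymm} the left-hand side equals $\bar{H}(Z_1)$ for some $H \in V_{N, \k}([\Pi'])$. Crucially, by \eqref{FVNEQ} applied with $\sigma$ = complex conjugation, $V_{N, \k}$ is stable under the map $H \mapsto \bar H$, so $\bar H$ lies in $V_{N, \k}$; linear independence in the chosen basis then forces $\langle H_F, G\rangle = 0$ for every $F \in \mathcal{C}_{N, \k}$. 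As this holds for all $G$ in the spanning set $\bigcup_{[\Pi']} \mathfrak{B}_{[\Pi']}$ of $V_{N, \k}$, each $H_F$ is orthogonal to $V_{N, \k}$, hence lies in $W_{N, \k}$ and may be written as $\sum_{G\in\mathcal{C}_{N, \k}}\alpha_{F,G}\,G(Z_2)$.

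The only subtle point is the stability of $V_{N, \k}$ under $H \mapsto \bar H$; once this is noted (it follows from~\eqref{FVNEQ} since complex conjugation permutes the classes $[\Pi']$ within $\widetilde{\mathcal{R}}_{\k}(N)$), everything else is bookkeeping with the unique expansion in a basis of $N_k(\Gamma_{2n}(N))$. Assembling the formulas for $H_G$ and $H_F$ yields exactly \eqref{keyidentity}.
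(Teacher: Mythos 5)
Your proposal is correct and follows essentially the same route as the paper: both proofs use the pullback identity \eqref{e:pullbackformulaclassical} to pin down the $V_{N,\k}$-diagonal block, and both invoke Lemma~\ref{l:pullbackformulasymm} together with the stability of $V_{N,\k}$ under $H\mapsto\bar H$ to kill the mixed $\mathcal{C}_{N,\k}\times\bar{\mathfrak{B}}_{N,\k}$ terms. One small imprecision: when you pair in $Z_2$ against $G$, the right-hand side is not literally $\sum_F F(Z_1)\langle H_F,G\rangle$; it also contains a $V_{N,\k}$-valued piece $\sum_{G'}G'(Z_1)\langle H_{G'},G\rangle$ — but since the left side $\bar H(Z_1)$ lies in $V_{N,\k}$ and the set $\mathfrak{B}_{N,\k}\cup\mathcal{C}_{N,\k}$ is a basis, linear independence still forces $\langle H_F,G\rangle=0$ exactly as you conclude, so the argument stands.
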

\begin{proof} Note that $\mathfrak{B}_{N, \k} = \bigcup_{[\Pi'] \in \widetilde{\mathcal{R}}_{\k}(N)} \mathfrak{B}_{[\Pi']}$ is an orthogonal basis of $V_{N, \k}$. Let $\bar{\mathfrak{B}}_{N, \k}$ be the set obtained by replacing each element $G$ of $\mathfrak{B}_{N, \k}$ by the element $\bar{G}$; then using the identity $\langle F_1, F_2 \rangle = \overline{\langle \bar{F_1}, \bar{F_2} \rangle}$, we see that $\bar{\mathfrak{B}}_{N, \k}$  is also an orthogonal basis of $V_{N, \k}$. So $\mathfrak{B}_{N, \k} \cup \mathcal{C}_{N, \k}$ and $\bar{\mathfrak{B}}_{N, \k} \cup \mathcal{C}_{N, \k}$ are both bases of $N_k(\Gamma_{2n}(N))$. From \eqref{e:eisnearholo} and \eqref{e:pullbackformulaclassical} we obtain an expression of the form
\begin{equation}\begin{split}\label{keyidentityprelim}
 G_{k,N}^\chi(Z_1, Z_2, r; Q_\tau) = & \chi(\tau)^{-n}\sum_{[\Pi'] \in \widetilde{\mathcal{R}}_{\k}(N)} C_N([\Pi'], \chi, r) \sum_{G \in \mathfrak{B}_{[\Pi']}} \frac{G(Z_1) \bar{G}(Z_2)}{\langle G, G\rangle} \\
 &+ \sum_{\substack{H_1 \in \mathcal{C}_{N, \k} \\ G_2 \in \mathfrak{B}_{N, \k}}} \alpha_{H_1, G_2} H_1(Z_1)\bar G_2(Z_2) \\
 &+ \sum_{H_1, H_2 \in \mathcal{C}_{N, \k}} \alpha_{H_1, H_2} H_1(Z_1) H_2(Z_2).\end{split} \end{equation} Now using Lemma \ref{l:pullbackformulasymm} we see that each $\alpha_{H_1, G_2}=0$.
\end{proof}

\begin{remark}If $n+2 \le r$, then by the results of \cite{PSS19} we have that $G_{k,N}^\chi(Z_1, Z_2, r; Q_\tau) $ is \emph{cuspidal} in each of the variables $Z_1$, $Z_2$.
\end{remark}
\subsection{Algebraicity of critical \texorpdfstring{$L$}{}-values}
\begin{proposition}\label{mainprop}
 Let $\Pi \in \mathcal{R}_{\k}(N)$, $\chi$ a primitive Dirichlet character such that $\cond(\chi)\mid N$ and $\chi_\infty = \sgn^k$, and $F \in V_{N, \k}([\Pi])$ be such that the Fourier coefficients of $F$ lie in a CM field. Let $r$ be an integer such that $1 \le r \le k_n-n$, $r \equiv k_n-n \pmod{2}$; if $r=1$ assume that $\chi^2 \neq 1$. Then for any  $\sigma \in \Aut(\C)$, we have
 $$
  \sigma \left(\frac{G(\chi)^nC_N([\Pi], \chi, r)}{\langle F, F\rangle} \right) = \frac{G({}^\sigma\!\chi)^nC_N({}^\sigma[\Pi], {}^\sigma\!\chi, r)}{\langle {}^\sigma\! F, {}^\sigma\!F\rangle},
 $$
 where $
  C_N([\Pi], \chi, r)$ is defined in \eqref{e:defcn}.
\end{proposition}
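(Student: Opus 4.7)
The plan is to apply $\sigma\in\Aut(\C)$ to the bilinear expansion \eqref{keyidentity} with $\tau=1$, use the arithmeticity \eqref{e:eisarith} of the Eisenstein series on the left, and compare with a second application of \eqref{keyidentity} at the twisted data $({}^\sigma\chi, Q_{\tau_\sigma})$, where $\tau_\sigma\in\hat\Z^\times$ is the image of $\sigma$ under the cyclotomic character. The proposition will emerge from matching coefficients in a common rational basis.

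First I would choose, for each $[\Pi']\in\widetilde{\mathcal{R}}_\k(N)$, a basis $\{F^{[\Pi']}_i\}_{i=1}^{d_{[\Pi']}}$ of $V_{N,\k}([\Pi'])$ whose Fourier coefficients lie in a CM field (possible by the remarks after \eqref{FVNEQ}), normalized so that $F^{[\Pi]}_1=F$, and similarly a CM-rational basis of the orthogonal complement $W_{N,\k}$. With $M_{[\Pi']}:=(\langle F^{[\Pi']}_i,F^{[\Pi']}_j\rangle)_{i,j}$ the Petersson Gram matrix, the basis-independence of the reproducing kernel for $V_{N,\k}([\Pi'])$ lets me rewrite \eqref{keyidentity} (with $\tau=1$) in the form
\[
 G_{k,N}^\chi(Z_1,Z_2,r;Q)=\sum_{[\Pi']}C_N([\Pi'],\chi,r)\sum_{i,j}(M_{[\Pi']}^{-1})_{ij}F^{[\Pi']}_i(Z_1)\bar F^{[\Pi']}_j(Z_2)+\text{(term in $W_{N,\k}\otimes W_{N,\k}$)}.
\]
The tensor products $F^{[\Pi']}_i(Z_1)\bar F^{[\Pi']}_j(Z_2)$ are linearly independent across distinct $[\Pi']$ because the spaces $V_{N,\k}([\Pi'])\otimes V_{N,\k}([{}^c\Pi'])$ are pairwise disjoint summands of $V_{N,\k}\otimes V_{N,\k}$, with $[\Pi']\mapsto[{}^c\Pi']$ a well-defined involution of $\widetilde{\mathcal{R}}_\k(N)$ via \eqref{FVNEQ}.

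Next I would apply $\sigma$ and use \eqref{e:eisarith} to conclude that the left-hand side becomes $G_{k,N}^{{}^\sigma\chi}(Z_1,Z_2,r;Q_{\tau_\sigma})$. The crucial point on the right-hand side is that the CM-field hypothesis forces $\sigma(\bar a)=\overline{\sigma(a)}$ for every $a$ in a CM subfield of $\C$: complex conjugation is the unique nontrivial involution of a CM field fixing its maximal totally real subfield, so the restriction of $\sigma c\sigma^{-1}$ to $\sigma(E)$ coincides with complex conjugation there. Consequently ${}^\sigma\bar F^{[\Pi']}_j=\overline{{}^\sigma F^{[\Pi']}_j}$, and the transformed basis vectors are ${}^\sigma F^{[\Pi']}_i(Z_1)\overline{{}^\sigma F^{[\Pi']}_j}(Z_2)$ with scalar coefficients $\sigma\bigl(C_N([\Pi'],\chi,r)(M_{[\Pi']}^{-1})_{ij}\bigr)$. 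On the other hand, applying \eqref{keyidentity} directly to $({}^\sigma\chi,Q_{\tau_\sigma})$ using the basis $\{{}^\sigma F^{[\Pi']}_i\}$ of $V_{N,\k}({}^\sigma[\Pi'])$ writes the same Eisenstein series in the same basis, now with the coefficient of ${}^\sigma F^{[\Pi']}_i(Z_1)\overline{{}^\sigma F^{[\Pi']}_j}(Z_2)$ equal to ${}^\sigma\chi(\tau_\sigma)^{-n}C_N({}^\sigma[\Pi'],{}^\sigma\chi,r)(\widetilde M_{{}^\sigma[\Pi']}^{-1})_{ij}$, for $\widetilde M_{{}^\sigma[\Pi']}$ the Petersson Gram matrix of $\{{}^\sigma F^{[\Pi']}_i\}$. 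Matching coefficients yields, for each $[\Pi']$ and every $i,j$ simultaneously, the scalar identity
\[
 \sigma\bigl((M_{[\Pi']}^{-1})_{ij}\bigr)=\mu_{[\Pi']}\,(\widetilde M_{{}^\sigma[\Pi']}^{-1})_{ij},\qquad \mu_{[\Pi']}:=\frac{{}^\sigma\chi(\tau_\sigma)^{-n}C_N({}^\sigma[\Pi'],{}^\sigma\chi,r)}{\sigma(C_N([\Pi'],\chi,r))},
\]
where critically $\mu_{[\Pi']}$ is independent of $i,j$; this promotes to the matrix identity $\sigma(M_{[\Pi']}^{-1})=\mu_{[\Pi']}\widetilde M_{{}^\sigma[\Pi']}^{-1}$.

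To conclude, I would invert and take the $(1,1)$-entry for $[\Pi']=[\Pi]$: this gives $\sigma(\langle F,F\rangle)=\mu_{[\Pi]}^{-1}\langle{}^\sigma F,{}^\sigma F\rangle$, i.e.,
\[
 \sigma\!\left(\frac{C_N([\Pi],\chi,r)}{\langle F,F\rangle}\right)=\frac{{}^\sigma\chi(\tau_\sigma)^{-n}\,C_N({}^\sigma[\Pi],{}^\sigma\chi,r)}{\langle{}^\sigma F,{}^\sigma F\rangle}.
\]
A direct calculation from the definition of $\Ga(\chi)$ in Section~\ref{s:notation} (using the relation $\tilde\chi(\tau_\sigma)=\chi(\tau_\sigma)^{-1}$ and the substitution $n\mapsto\tau_\sigma^{-1}n$ inside the sum) yields $\sigma(\Ga(\chi))={}^\sigma\chi(\tau_\sigma)\Ga({}^\sigma\chi)$, so multiplying through by $\sigma(\Ga(\chi)^n)={}^\sigma\chi(\tau_\sigma)^n\Ga({}^\sigma\chi)^n$ exactly cancels the factor ${}^\sigma\chi(\tau_\sigma)^{-n}$ and produces the stated identity. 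The main technical obstacle I foresee is the careful verification that the non-cuspidal term $\sum\alpha_{H_1,H_2}H_1(Z_1)H_2(Z_2)$ is indeed confined to $W_{N,\k}\otimes W_{N,\k}$ under the Galois action (so as not to contaminate the matching in the cuspidal-times-cuspidal component; here Lemma \ref{l:pullbackformulasymm} is essential for ensuring no $V\otimes W$ or $W\otimes V$ cross-terms appear), together with the bookkeeping needed to arrange CM-rational bases simultaneously and coherently across all the classes $[\Pi']\in\widetilde{\mathcal{R}}_\k(N)$.
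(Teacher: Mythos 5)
Your strategy — apply $\sigma$ to the bilinear kernel expansion, then re-expand the twisted Eisenstein series and match coefficients — is the same core reciprocity argument as in the paper, but you execute it at the level of the \emph{full Gram matrix} of a rational basis, whereas the paper isolates a \emph{single} coefficient by taking a Petersson inner product against ${}^\sigma\!F$. Your handling of the CM hypothesis (so that ${}^\sigma\!\bar F=\overline{{}^\sigma\!F}$), the passage to an arbitrary rational basis via $M_{[\Pi']}^{-1}$, and the final Gauss-sum cancellation via $\sigma(\Ga(\chi))={}^\sigma\!\chi(\tau_\sigma)\Ga({}^\sigma\!\chi)$ are all correct and match the paper's computations.

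However, the obstacle you flag at the end is not a bookkeeping nuisance — it is a genuine gap, and as stated your argument does not close it. Your coefficient-matching for the $V_{N,\k}\otimes V_{N,\k}$ block requires that after applying $\sigma$ termwise to \eqref{keyidentity}, the transformed non-cuspidal tensor $\sum\sigma(\alpha_{H_1,H_2})\,{}^\sigma\!H_1(Z_1)\,{}^\sigma\!H_2(Z_2)$ have \emph{zero} component in $V_{N,\k}\otimes V_{N,\k}$. We do know $V_{N,\k}$ is $\Aut(\C)$-stable by \eqref{FVNEQ}, but $W_{N,\k}$ is defined as a Petersson \emph{orthogonal} complement, and nothing in the paper establishes that $W_{N,\k}$ (or even its cuspidal part) is $\Aut(\C)$-stable; indeed for $\k$ with $n+1\le k_n<2n$ the infinitesimal-character argument via Lemma \ref{near-holo-arch-lemma} does not separate $L(\k)$ from every other $L(\k')$ with $\k'\in\Lambda^{++}$, so writing ${}^\sigma\!H_i=v_i+w_i$ with $v_i\in V_{N,\k}$, you cannot rule out that $\sum\sigma(\alpha_{H_1,H_2})\,v_1\otimes v_2\neq 0$, which would contaminate the block you are trying to match. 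The paper circumvents this entirely: it pairs ${}^\sigma G^\chi_{k,N}(-,Z_2,r;Q)$ with ${}^\sigma\!F$ in the $Z_1$ variable, expands the result in the linearly independent set ${}^\sigma\!\bar{\mathfrak B}_{N,\k}\cup{}^\sigma\!\mathcal C_{N,\k}$, and then compares only the coefficient of ${}^\sigma\!\bar F(Z_2)$; the residual terms $\langle{}^\sigma\!H_1,{}^\sigma\!F\rangle\,{}^\sigma\!H_2(Z_2)$ may well be nonzero, but they involve basis elements ${}^\sigma\!H_2\neq{}^\sigma\!\bar F$ and so carry no weight in the one coefficient being extracted. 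If you want to salvage your matrix-level proof, you would first need to establish $\Aut(\C)$-stability of $W_{N,\k}$; absent that, you should fall back to the single-vector extraction.
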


\begin{proof}
The proof follows the method\footnote{We are grateful to the referee for encouraging us to take a closer look at \cite[Appendix]{bocherer-schmidt}.} of the lemma in \cite[Appendix]{bocherer-schmidt}. For each $[\Pi']$ in  $\widetilde{\mathcal{R}}_{\k}(N)$, we pick an orthogonal basis $\mathfrak{B}_{[\Pi']}$ of $V_{N, \k}([\Pi'])$ such that $\mathfrak{B}_{[\Pi]}$ includes $F$ and  $\mathfrak{B}_{ {}^\sigma \![\Pi]}$ includes $ {}^\sigma \!F$. We can do this thanks to \eqref{FVNEQ}. Let $W_{N, \k}$ denote the orthogonal complement of $V_{N, \k}$ in $N_k(\Gamma_{2n}(N))$ and let $\mathcal{C}_{N, \k}$ be a basis of $W_{N, \k}$.

We now use Lemma \ref{l:expansionpullback} to express ${}^\sigma G_{k,N}^\chi(Z_1, Z_2, r; Q)$ in two different ways. First, using \eqref{e:eisarith} and Lemma \ref{l:expansionpullback} we obtain
\begin{equation}\begin{split}\label{keyidentity2}{}^\sigma G_{k,N}^\chi(Z_1, Z_2, r; Q) =& G_{k,N}^{{}^\sigma\! \chi}(Z_1, Z_2, r; Q_\tau)
\\= & {}^\sigma\! \chi(\tau)^{-n} \sum_{[\Pi'] \in \widetilde{\mathcal{R}}_{\k}(N)} C_N([\Pi'], {}^\sigma\! \chi, r) \sum_{G \in \mathfrak{B}_{[\Pi']}} \frac{G(Z_1) \bar{G}(Z_2)}{\langle G, G\rangle} \\ &+ \sum_{H_1, H_2 \in \mathcal{C}_{N, \k}} \alpha'_{H_1, H_2} H_1(Z_1) H_2(Z_2).\end{split} \end{equation} for some $\alpha'_{H_1, H_2} \in \C$. On the other hand, by letting $\sigma$ act on each term of the expression for $G_{k,N}^\chi(Z_1, Z_2, r; Q) = G_{k,N}^\chi(Z_1, Z_2, r; Q_1)$ in the expression given by  Lemma \ref{l:expansionpullback}, we obtain \begin{equation}\begin{split}\label{keyidentity3}{}^\sigma G_{k,N}^\chi(Z_1, Z_2, r; Q) = & \sum_{[\Pi'] \in \widetilde{\mathcal{R}}_{\k}(N)} \sigma(C_N([\Pi'],  \chi, r) ) \sum_{G \in \mathfrak{B}_{[\Pi']}} \frac{{}^\sigma\!G(Z_1) {}^\sigma\!\bar{G}(Z_2)}{\sigma(\langle G, G\rangle)}  \\ &+ \sum_{H_1, H_2 \in \mathcal{C}_{N, \k}} \beta_{H_1, H_2} {}^\sigma \! H_1(Z_1) {}^\sigma \!H_2(Z_2).\end{split} \end{equation} for some $\beta_{H_1, H_2} \in \C$.

We now evaluate $L:=\langle {}^\sigma G_{k,N}^\chi( - , Z_2, r; Q), \ {}^\sigma \!F\rangle$ in two  ways. On the one hand, using \eqref{keyidentity2}, and recalling that ${}^\sigma \!F$ is an element of $\mathfrak{B}_{ {}^\sigma \![\Pi]}$ and that ${}^\sigma \!F$ is orthogonal to all elements in $\left(\bigcup_{\substack{[\Pi']  \in \widetilde{\mathcal{R}}_{\k}(N) \\ [\Pi'] \neq {}^\sigma \![\Pi]}} \mathfrak{B}_{ [\Pi']} \right) \bigcup \mathcal{C}_{N, \k}$,
 we get \begin{equation}\label{e:compare1}L =   {}^\sigma\! \chi(\tau)^{-n} C_N({}^\sigma \![\Pi], {}^\sigma\! \chi, r) \overline{{}^\sigma \!F}(Z_2) = {}^\sigma\! \chi(\tau)^{-n} C_N({}^\sigma \![\Pi], {}^\sigma\! \chi, r) {}^\sigma \!\bar{F}(Z_2)\end{equation} where the second equality uses our hypothesis that the Fourier coefficients of $F$ belong in a CM field.
On the one hand, using \eqref{keyidentity3}, we get \begin{equation}\label{e:compare2}\begin{split}L = & \sum_{[\Pi'] \in \widetilde{\mathcal{R}}_{\k}(N)} \sigma(C_N([\Pi'],  \chi, r) ) \sum_{G \in \mathfrak{B}_{[\Pi']}} \frac{\langle {}^\sigma\!G ,  {}^\sigma \!F \rangle }{\sigma(\langle G, G\rangle)}{}^\sigma\!\bar{G}(Z_2)  \\ &+ \sum_{H_1, H_2 \in \mathcal{C}_{N, \k}} \beta_{H_1, H_2} \langle {}^\sigma \! H_1, {}^\sigma \!F \rangle   {}^\sigma \!H_2(Z_2).\end{split}\end{equation}

We now make the obvious but crucial observation that the elements of the set ${}^\sigma \! \bar{\mathfrak{B}}_{N, \k} \cup {}^\sigma \!\mathcal{C}_{N, \k}$ are linearly independent. So by comparing the coefficients of ${}^\sigma \!\bar{F}(Z_2)$
in \eqref{e:compare1} and \eqref{e:compare2}  together with the well known fact $\sigma(G(\chi)^n) =\,^\sigma\!\chi(\tau^n) G({}^\sigma\! \chi)^n$, we obtain the desired equality.
\end{proof}
%\begin{remark}\label{rem:restriction}\fbox{This remark is no longer relevant and will be removed}The reason that we restrict ourselves to the range  $n+2 \le r \le k_n-n$ is that in this range $E_{k,N}^\chi(\mat{Z_1}{}{}{Z_2},   \frac{n-k+r}2; Q_\tau)$ is \emph{cuspidal} in each of the variables $Z_1$, $Z_2$, and so we can apply the projection operator $\mathfrak{p}_{\chi_\k} \otimes  \mathfrak{p}_{\chi_\k}$ on it and use the arithmetic properties of this operator proved in Proposition \ref{p:projarith}. If we knew that the ``cuspidal projection" operator from the space of nearly holomorphic modular forms to the space of nearly holomorphic cusp forms is $\Aut(\C)$-equivariant, it would be possible to extend Proposition \ref{mainprop} to the larger range $1\le r \le k_n-n$ by first applying this projection operator followed by  $\mathfrak{p}_{\chi_\k}$ on each variable.
%\end{remark}

%\begin{remark}\fbox{This remark is no longer relevant and will be removed}One can use \eqref{e:decom} to directly express $E_{k,N}^\chi(\mat{Z_1}{}{}{Z_2},   \frac{n-k+r}2; Q_\tau)$ as a sum like the right side of \eqref{keyidentity} with the $[\Pi']$ ranging over the larger set $\bigsqcup_{\k' \in \Lambda^{++} }\widetilde{\mathcal{R}}_{\k'}(N)$. However, the resulting expression cannot replace \eqref{keyidentity} in the proof of Proposition \ref{mainprop} above. To do so, we would need to prove an analogue of the crucial implication \eqref{FVNEQ} for \emph{each} $\k'$ in $\Lambda^{++}$ and Proposition \ref{p:uoper} is not sufficient for this.
%\end{remark}

In the following theorem, we will obtain an algebraicity result for the special values $L^S(r,\Pi \boxtimes \chi)$ at all the critical points $r$ in the right half plane.

\begin{theorem}\label{t:globalthmarithmetic}
 Let  $k_1\ge k_2 \ge \ldots\ge k_n\ge n+1$ be integers  where all $k_i$ have the same parity. Let $\Pi$ be an irreducible cuspidal automorphic representation of $\GSp_{2n}(\A)$ such that $\Pi_\infty$ is the holomorphic discrete series representation with highest weight $(k_1, k_2, \ldots, k_n)$. Let $S$ be a finite set of places of $\Q$ including $\infty$ such that $\Pi_p$ is unramified for all primes $p \notin S$. Let $F$ be a nearly holomorphic cusp form of  scalar weight $k_1$ (with respect to some congruence subgroup) whose Fourier coefficients lie in a CM field, such that any irreducible constituent $\Pi_F = \otimes_v \Pi_{F,v}$ of the automorphic representation generated by (the adelization) $\Phi_F$ satisfies $\Pi_{F,\infty} \simeq \Pi_\infty$ and $\Pi_{F,p} \sim \Pi_p$ for $p \notin S$. Let $\chi$ be a Dirichlet character such that $\chi_\infty = \sgn^{k_1}$ and let $r$ be an integer such that $1 \le r \le k_n-n$, $r \equiv k_n-n \pmod{2}$; if $r=1$ assume that $\chi^2 \neq 1$.  Then for  $\sigma \in \Aut(\C)$,  we have
 \begin{equation}\label{t:globalthmarithmeticeq1}
  \sigma \left(\frac{L^S(r,\Pi \boxtimes \chi)}{i^{k_1}\pi^{nk_1+nr+r} G(\chi)^{n+1} \langle F, F\rangle}\right) = \frac{L^S(r,{}^\sigma\Pi \boxtimes {}^\sigma\!\chi)}{i^{k_1}\pi^{nk_1+nr+r} G({}^\sigma\!\chi)^{n+1}\langle {}^\sigma\!F,{}^\sigma\!F\rangle}.
 \end{equation}
\end{theorem}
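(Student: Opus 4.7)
The plan is to reduce the theorem to Proposition \ref{mainprop} by carefully tracking the various factors entering $C_N([\Pi], \chi, r)$.

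First I would choose a positive integer $N$ such that $F \in N_{k_1}(\Gamma_{2n}(N))$, $\cond(\chi) \mid N$, every prime in $S \setminus \{\infty\}$ divides $N$, and $\Pi \in \mathcal{R}_\k(N)$. The hypothesis on $\Phi_F$ then guarantees that every irreducible constituent of $\Pi_F$ lies in the equivalence class $[\Pi] \in \widetilde{\mathcal{R}}_\k(N)$, so $F \in V_{N,\k}([\Pi])$. Since $F$ has Fourier coefficients in a CM field by assumption, Proposition \ref{mainprop} applies and yields
$$\sigma\!\left(\frac{\Ga(\chi)^n C_N([\Pi], \chi, r)}{\langle F, F\rangle}\right) = \frac{\Ga({}^\sigma\!\chi)^n C_N({}^\sigma\![\Pi], {}^\sigma\!\chi, r)}{\langle {}^\sigma\!F, {}^\sigma\!F\rangle}.$$

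Next I would expand $C_N([\Pi], \chi, r)$ using its definition \eqref{e:defcn} and verify that the ratio of $\Ga(\chi)^nC_N/\langle F, F\rangle$ to $D(\Pi, \chi, r; F)$ breaks into factors each of which is algebraic and explicitly $\sigma$-equivariant. The rational number $A_\k(r-1)$ is fixed by $\sigma$; the volume factor $\pi^{n(n+1)/2}\prod_{p\mid N}\vol(\Gamma_{2n}(p^{m_p}))/\vol(\Sp_{2n}(\Z)\bs\Sp_{2n}(\R))$ is rational by the classical Siegel volume formula; the finite ratio $L^N(r,\Pi\boxtimes\chi)/L^S(r,\Pi\boxtimes\chi)$ is a product of spherical local $L$-factors at primes $p\mid N$ with $p\notin S$, which are rational functions of integer powers of $p$ and the Satake parameters of $\Pi_p$, and are therefore compatible with the $\sigma$-action on Satake parameters (cf.~\cite{blaharam}); and the Dirichlet $L$-values $L^N(r+n, \chi)$ (critical since $r + n \equiv k_1 \pmod 2$) and $L^N(2r+2j-2, \chi^2)$ for $1 \le j \le n$ (critical since $\chi^2$ is even and $2r + 2j - 2 \ge 2$) satisfy the classical reciprocity law for Dirichlet $L$-values, which combined with $\sigma(\Ga(\chi)^m) = {}^\sigma\!\chi(\tau)^m \Ga({}^\sigma\!\chi)^m$ gives the required equivariance piece by piece.

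The main obstacle will be the careful bookkeeping needed to verify that the exponents and Gauss sum powers all combine correctly: the total exponent of $\pi$ must reduce to $nk_1 + nr + r$, the exponent of $i$ to $k_1$ (using the common parity of the $k_i$ and $r \equiv k_n - n \pmod 2$), and the total power of $\Ga(\chi)$ in the denominator of $D(\Pi,\chi,r;F)$ to $n+1$. In particular, one must track the cyclotomic twists $\chi(\tau)^{-n}$ implicit in $C_N$ (arising from the $\Aut(\C)$-action on the Eisenstein series underlying Proposition \ref{mainprop}) against the Gauss sum factors introduced by the Dirichlet $L$-value reciprocity for $L^N(r+n, \chi)$, and confirm that they combine cleanly to produce exactly the factor $\Ga(\chi)^{n+1}$ in the denominator of $D(\Pi, \chi, r; F)$ and no extra $\tau$-dependent twists in the final expression.
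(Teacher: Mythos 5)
Your proposal is correct and follows essentially the same route as the paper: fix a suitable level $N$, invoke Proposition \ref{mainprop}, pass from $L^N$ to $L^S$ via the elementary $\sigma$-equivariance of the spherical local factors at primes $p\mid N$, $p\notin S$, and then clean up the Dirichlet $L$-values with the classical reciprocity law \cite[Lemma 5]{shi76}. The paper carries out exactly this bookkeeping in equations \eqref{finaleq1}--\eqref{finaleq3}.
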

\begin{proof}  We fix an integer $N$ divisible by all primes in $S$ such that $\cond(\chi)\mid N$ and  $F \in V_{N, \k}([\Pi])$.
By Proposition \ref{mainprop},
\begin{equation}\label{finaleq1}
 \begin{split}
  &\sigma \left( i^{nk_1}\frac{G(\chi)^nL^N(r,\Pi \boxtimes \chi)}{\pi^{n(k_1-r-n)} L^N(r+n, \chi) \prod_{j=1}^n L^N(2r+2j-2, \chi^2)\langle F, F\rangle} \right) \\ &= i^{nk_1}\frac{G({}^\sigma\!\chi)^nL^N(r,{}^\sigma\Pi \boxtimes {}^\sigma\!\chi)}{\pi^{n(k_1-r-n)} L^N(r+n, {}^\sigma\!\chi) \prod_{j=1}^n L^N(2r+2j-2, {}^\sigma\!\chi^2)\langle {}^\sigma\!F, {}^\sigma\!F\rangle}.
 \end{split}
\end{equation}

If $p$ is a prime such that $p \notin S$, then it is clear from the definition of local $L$-factors that \begin{equation}\label{e:aux}
 \sigma \left(\frac{L(r,\Pi_p \boxtimes \chi_p)}{L(r+n, \chi_p) \prod_{j=1}^n L(2r+2j-2, \chi_p^2)} \right) \\ = \frac{L(r,{}^\sigma\Pi_p \boxtimes {}^\sigma\!\chi_p)}{ L(r+n, {}^\sigma\!\chi_p) \prod_{j=1}^n L(2r+2j-2, {}^\sigma\!\chi_p^2)}.
\end{equation}

Combining \eqref{finaleq1} and \eqref{e:aux} we get \begin{equation}\label{finaleq2}
 \begin{split}
  &\sigma \left( i^{nk_1}\frac{G(\chi)^nL^S(r,\Pi \boxtimes \chi)}{\pi^{n(k_1-r-n)} L^S(r+n, \chi) \prod_{j=1}^n L^S(2r+2j-2, \chi^2)\langle F, F\rangle} \right) \\ &= i^{nk_1}\frac{G({}^\sigma\!\chi)^nL^S(r,{}^\sigma\Pi \boxtimes {}^\sigma\!\chi)}{\pi^{n(k_1-r-n)} L^S(r+n, {}^\sigma\!\chi) \prod_{j=1}^n L^S(2r+2j-2, {}^\sigma\!\chi^2)\langle {}^\sigma\!F, {}^\sigma\!F\rangle}.
 \end{split}
\end{equation}

For a Dirichlet character $\psi$ and a positive integer $t$ satisfying $\psi_\infty = \sgn^t$, we have by \cite[Lemma~5]{shi76})
\begin{equation}\label{finaleq3}
 \sigma \left(\frac{L^S(t, \psi)}{(\pi i)^t G(\psi)} \right) = \frac{L^S(t, {}^\sigma\!\psi)}{(\pi i)^t G({}^\sigma\!\psi)}.
\end{equation}
Plugging \eqref{finaleq3} (for $\psi=\chi$ and $\psi=\chi^2$) into \eqref{finaleq2}, we obtain \eqref{t:globalthmarithmeticeq1}.
\end{proof}

\begin{remark}One can choose $F$ in Theorem \ref{t:globalthmarithmetic} such that its Fourier coefficients lie in $\Q([\Pi])$ (see the last paragraph of Section \ref{s:classicaladelic}). For such an $F$, Theorem \ref{t:globalthmarithmetic} implies that $$\frac{L^S(r,\Pi \boxtimes \chi)}{i^{k_1}\pi^{nk_1+nr+r} G(\chi)^{n+1} \langle F, F\rangle} \in \Q([\Pi])\Q(\chi).$$
\end{remark}

\emph{Proof of Corollary \ref{cor:mainintro}.} This follows immediately from Theorem \ref{t:globalthmarithmetic}, and the fact that the Gauss sums $G(\chi)$ are algebraic numbers. In fact, using the algebraic properties of Gauss sums (see Lemma 8 of \cite{shi76}), one observes that the quantity in \eqref{e:last} lies in the CM field given by $\Q([\Pi])\Q(\chi)\Q(e^{\frac{2 \pi i}{\mathrm{cond}(\chi)}})$, where $\chi=\chi_1 \overline{\chi_2}$.

\addcontentsline{toc}{section}{Bibliography}
\bibliography{special-degree-n}{}
\end{document}